\theoremstyle{plain}
\renewcommand\thefigure{\thesection.\@arabic\c@figure}
\newtheorem{thm}{\bf Theorem}
\newenvironment{theorem}{\begin{thm}} {\end{thm}}
\newtheorem{cor}{\bf Corollary}
\newtheorem{proposition}{Proposition}[section]
\newtheorem{lmm}{\bf Lemma}
\renewcommand{\thelmm}{\arabic{section}.\arabic{lmm}}
\newenvironment{lemma}{\begin{lmm}}{\end{lmm}}
\theoremstyle{remark}
\newtheorem{remark}{\bf Remark}[section]
\definecolor{ligreen}{rgb}{0.0, 0.3, 0.0}
\definecolor{darkblue}{rgb}{0.0, 0.0, 0.55}
\definecolor{anti-flashwhite}{rgb}{0.55, 0.57, 0.68}
\def \re {{\rm e}}
\def \rd {{\rm d}}
\newcommand{\bs}[1]{\boldsymbol{#1}}
\numberwithin{equation}{section}
\numberwithin{figure}{section}
\numberwithin{table}{section}
\newcommand{\nn}{\nonumber}
\begin{document}
\baselineskip 13pt

% Information for title and author
\title[torus and helical pipes]{Solving PDEs on Surfaces of Pipe Geometries Using New Coordinate Transformations and   High-order Compact Finite Differences}

\author[
	S. Hu, Y. Jiao, D. Kong  $\&$\, L. Wang
	]{
		\;\; Shuaifei Hu${}^{\dag}$,\;\; Yujian Jiao${}^{\dag,*}$,\;\; Desong Kong${}^{\S}$\;\; and \;\;  Li-Lian Wang${}^{\ddag}$
		}
	\thanks{${}^{*}$Corresponding author.  \\
    \indent ${}^{\dag}$Department of Mathematics, Shanghai Normal University, Shanghai, 200234, China (shuaifei\_hu@126.com, yj-jiao@shnu.edu.cn). \\
    \indent ${}^{\S}$Eastern Institute for Advanced Study, Eastern Institute of Technology, Ningbo,  315200, China (kongdesong2022@outlook.com). \\
        \indent ${}^{\ddag}$Division of Mathematical Sciences, School of Physical and Mathematical Sciences, Nanyang Technological University, 637371, Singapore (lilian@ntu.edu.sg). 
		}
\keywords{Helical pipe, Torus pipe, Curvilinear coordinate systems, Riemannian metrics, Compact difference method, Error estimates}
\subjclass[2010]{65N06, 65N12, 35J25, 65D25, 58J90}

\begin{abstract}
We introduce suitable coordinate systems for pipes and their variants that allow us to transform partial differential equations (PDEs) on the pipe surfaces or in the solid pipes into computational domains with fixed limits/ranges.  Such a notion is reminiscent of the polar and cylindrical coordinates for their advantageous geometries.    
The new curvilinear coordinates are non-orthogonal in two directions, so  the  Laplace--Beltrami operators involve mixed derivatives. To deal with 
the variable coefficients arising from coordinate transformations and diverse surface geometries,  
we develop efficient fourth-order compact finite difference methods adaptable to various scenarios.
We then rigorously prove the  convergence of the proposed method for some model problem, and apply the solver to several other types of PDEs.
We further demonstrate the efficiency and accuracy of our approach with ample numerical results. Here, we only consider the compact finite differences for the transformed PDEs for simplicity, but one can  employ the spectral-collocation methods to efficiently handle such variable coefficient problems.  
\end{abstract}
\maketitle

\vspace*{-10pt}

\section{Introduction}
Due to their distinctive geometric configurations, torus and helical pipes are widely used in engineering and fluid dynamics. 
Helical pipes offer numerous advantages across various industries, including the chemical, nuclear, and pharmaceutical sectors~\cite{Mandal2011,Koutsky1964}. 
Compared with conventional straight pipes, they exhibit enhanced heat transfer efficiency, improved safety and reliability, compact structure, and effective space utilization~\cite{LiCai2018}. 
Meanwhile, torus pipes are employed in the design of specialized devices and in flow field investigations to explore critical phenomena such as curvature-induced flow effects, phase separation, and drag reduction control.

A number of significant studies have contributed to advancing both the theoretical understanding and numerical modeling of fluid flow in torus and helical pipes. 
Elaswad et al.~\cite{laswad2024} employed the computational fluid dynamics-discrete phase model framework to study the behavior of particle-laden flows in accelerating torus pipes. Canton et al.~\cite{Canton2016} investigated the modal instability of incompressible flow in torus pipes using linear stability analysis and direct numerical simulation. Landman~\cite{Landman1990,Landman1990-2} reformulated the Navier--Stokes equations in helical coordinates to examine the existence of helical waves in circular pipes. Germano et al.~\cite{Germano1982} constructed an orthogonal coordinate system along a general spatial curve to study steady viscous flows. Chen et al.~\cite{ChenWC1986} proposed a finite element method for simulating fully developed incompressible flows in helical pipes with arbitrary curvature and torsion. On the other hand, Wang~\cite{WangC1981} introduced a non-orthogonal helical coordinate system based on a circular cross-section to investigate the effects of curvature and torsion on flow behavior. 
Sanjabi et al.~\cite{Sanjabi2010} developed a steady-state three-dimensional mathematical model to describe incompressible polymer melt flow in a helical configuration. In a numerical context, Sahu et al.~\cite{Sahu2016} conducted simulations using ANSYS CFX to assess the influence of pitch and mass flow rate on the heat transfer performance of helical coil heat exchangers.

From an applied perspective, helical and torus pipes are widely utilized in various engineering and biomedical systems, owing to their superior heat and mass transfer characteristics. Cheng et al.~\cite{Cheng2020} analyzed flow patterns under varying Reynolds numbers and geometrical parameters, revealing their effect on secondary flow structures. Yamamoto et al.~\cite{Yamamoto2002} employed both visualization techniques and numerical simulations to examine secondary flows in highly twisted helical pipes. Jones et al.~\cite{Jones1989} investigated how flow mixing can be enhanced through sequences of pipe elbows without moving parts. In the biomedical domain, Cookson et al.~\cite{Cookson2009,Cookson2010} simulated helical pipe flow under physiological conditions, showing how structural parameters influence mixing efficiency. Zabielski et al.~\cite{Zabielski2000} modeled pulsatile blood flow in an active curved segment approximated by a helical pipe, and Zovatto et al.~\cite{Zovatto2017} compared flow in helical blood vessels with their straight counterparts to highlight geometric effects on hemodynamics. In heat transfer applications, AlHajeri et al.~\cite{AlHajeri2020} studied the condensation performance of R-407C in helical pipes under different saturation temperatures and flow rates. Tian et al.~\cite{Tian2022} examined how winding angle, vapor quality, and pipe diameter affect condensation heat transfer and frictional pressure drop.

The study of finite difference method and spectral method for complex domains presents significant challenges. Lui \cite{Lui2009} proposed a spectral method using domain embedding, which embeds irregular domains into regular ones so that standard spectral methods can be used. Shen et al.~\cite{GuShen2021,GuShen2020} developed efficient and well-posed spectral methods for solving elliptic PDEs in complex geometries using the fictitious domain approach, providing a rigorous error analysis for the Poisson equation. 
By employing an explicit mapping, Orszag \cite{Orszag1980} developed a Fourier--Chebyshev spectral method to solve the heat equation in an annular domain.
Based on the polar and spherical coordinate transformations, Wang et al.~\cite{Wang2023,WangYao2023,yao2025efficient} studied the spectral-Galerkin methods for elliptic PDEs on two- and three-dimensions with curved boundaries, and rigorously analyzed the convergence.
Note that the spectral method is applicable since the regions in the transformed geometries are regular, for instance, a unit disk or sphere.
Hao et al. \cite{hao2024numerical} studied finite difference methods for solving elliptic PDEs on manifolds. 
Li et al. developed several high-order finite difference methods for %interface
problems in complex geometries. In \cite{Su2016}, they proposed a fourth-order compact scheme in polar coordinates for Helmholtz equations with piecewise wave numbers, using the immersed interface method to treat discontinuities. In \cite{RuizAlvarez2024}, an immersed interface method in cylindrical coordinates was introduced for Navier--Stokes equations with interface-induced discontinuities and geometric singularities. A third-order compact scheme was further developed in \cite{Pan2021} for Poisson and Helmholtz interface problems with singular sources and flux jumps, and extended to irregular domains using an augmented formulation and Schur complement solver.

In this paper, we consider the surface geometries generated through a centerline with different cross-sectional functions, which include the torus and helical pipes as special cases.
We then introduce new coordinate systems for pipes  that allow us to transform PDEs on the pipe surfaces or in the solid pipes into computational domains with fixed limits/ranges.  
More precisely, using the local TNB frame, we can establish the relationship between the Cartesian coordinate system and the moving coordinate system. Then we can transform  the differential operators and define the corresponding operators on the surfaces or in the enclosed solid pipes. It is noteworthy that orthogonal helical coordinate systems were introduced in \cite{germano1982effect,yu1997development}. 
Although the differential operators in the coordinates therein might take simpler forms, the transformed surfaces in the helical coordinates system become non-rectangular domains, which complicates numerical implementation.
Indeed, as pointed out in \cite{huttl1999navier}, for helical pipes,  to impose boundary conditions, the rotation of the coordinate system along the pipe axis must be taken into account. It becomes much more complicated for pipes with general cross-sections.
The coordinate systems to be considered are non-orthogonal in two directions, but the computational domains become rectangular for pipes with circular sectors which is essential for the design of efficient numerical schemes.  For the pipes with more general cross-sections, we further introduce suitable mappings to transform the non-circular sections to circular ones.   
To handle the variable coefficients introduced by coordinate transformations and mappings for general cross-sections,
we develop efficient fourth-order compact finite difference methods that can be adapted  to a wide range of scenarios.
We also introduce new ideas for constructing the compact finite difference approximation to the resulting  variable coefficients, and conduct the error analysis.
We summarize our main contributions as follows:
\begin{itemize}
  \item We construct suitable curvilinear coordinate systems for pipe geometries using the moving orthogonal frame along a curved centerline that can characterize  general pipes with circular or more general cross-sections. 

  \item We provide a general method for deriving the Riemannian metrics and the Laplacian operator under such curvilinear  coordinate systems.
  
  \item We derive novel compact notations for mixed derivatives $(p(x,y)u_x)_y$ and $(p(x,y)u_y)_x$ and construct a fourth-order compact difference scheme for the Poisson-type equations on the surfaces of torus and helical pipes.
  
  \item 
  We rigorously prove the solvability, stability, and convergence of the proposed compact difference method.
  
\end{itemize}

The remainder of the paper is organized as follows. In Section \ref{sec2}, we establish a general framework for curvilinear coordinate transformations generated by an arbitrarily nonzero curvature curve. We further derive the Riemannian metrics and the Laplacian operator and provide several examples of open and closed center curves. 
In Section \ref{sec:fdm}, we propose two compact notations to handle the mixed derivative terms, along with preparatory tools for the numerical analysis. 
{In Section \ref{sec:torus}, we take the centerline to be circular and helical, thereby obtaining the torus and helical pipe, and subsequently develop efficient fourth-order compact finite difference schemes for the Poisson-type equations on their surfaces.}
Additionally, we prove the solvability, stability, and convergence of these compact difference schemes, which are verified by various numerical experiments. 
Finally, some concluding remarks are presented in Section \ref{sec:con}.
In the following, we shall use the notations $\bs x = (x,y,z)^\top = (x_1, x_2, x_3)^\top$ to be the physical variable, and $\bs \xi=(r, \theta, \omega)^\top = (\xi_1, \xi_2, \xi_3)^\top$ the parameter variable.

\section{A general framework for curvilinear coordinate transformations}\label{sec2}
In this section, we establish a general framework for the curvilinear coordinate system to generate pipes and their variants. 
We then present the fundamental theory of curvilinear coordinate transformations for basic differential operators and present transformed form for these operators in the transformed coordinates. 
Remarkably, this framework provides the theoretical foundation for the subsequent analysis of torus and helical coordinate systems, and for the development of compact difference schemes designed for these geometries.

\subsection{Curvilinear coordinate transformations}
In general, we consider a pipe with the centerline of the parametric form 
\begin{align}\label{eqc2:curve}
    C:\; \bs{r}_c(\omega) = (x_c(\omega),\; y_c(\omega),\; z_c(\omega))^\top \in \mathbb R^3, 
\end{align}
representing the position vector of the particle at time $t=\omega$. 
Assume that the curve is smooth enough and non-degenerate with $\bs{r}_c'(\omega) \neq 0$ and a nonzero curvature.
The curve $C$ can also be parameterized by the arc-length $ s $, defined as
\begin{align}\label{eqc2:length}
s = s(\omega) := \int_0^\omega \| \bs{r}_c'(\sigma) \| \ \rd \sigma = \int_0^\omega \sqrt{(x_c'(\sigma))^2 + (y_c'(\sigma))^2 + (z_c'(\sigma))^2} \, \rd \sigma.
\end{align}
Based on the arc-length parameterization, we define the Frenet--Serret frame (or TNB frame) of the curve $C$ as
\begin{align}\label{eqc2:TNB}
\bs{T} = \frac{\rd \bs{r}_c}{\rd s}, \quad
\bs{N} = \frac{1}{\kappa} \frac{\rd \bs{T}}{\rd s}, \quad
\bs{B} = \bs{T} \times \bs{N},
\end{align}
where $ \bs{T} $, $ \bs{N} $, and $ \bs{B} $ represent the unit tangent, unit normal, and unit binormal vectors, respectively. 
Here, $ \kappa(s) =\| \bs{T}'(s) \| $ (nonzero) is the curvature of $C$, and $ \tau(s) = \bs{N}'(s) \cdot \bs{B}(s) $ is the torsion of $C$. The well-known Frenet--Serret formulas are given by
\begin{align}\label{eqc2:TNBderiva}
\bs{T}'(s) = \kappa(s) \bs{N}(s), \quad
\bs{N}'(s) = -\kappa(s) \bs{T}(s) + \tau(s) \bs{B}(s), \quad
\bs{B}'(s) = -\tau(s) \bs{N}(s).
\end{align}

In practice, it is often more convenient to work with the parameterization in $ \omega $. Therefore, we define the TNB frame in $ \omega $ as
\begin{align*}
    \{ \bs{e}_1(\omega), \bs{e}_2(\omega), \bs{e}_3(\omega) \} = \{ \bs{T}(s), \bs{N}(s), \bs{B}(s) \},
\end{align*}
and by \eqref{eqc2:TNB}-\eqref{eqc2:TNBderiva}, we obtain (cf. Theorem 4.3 in \cite{ONeill2006}):
\begin{align}\label{movingorth}
  \bs{e}_1(\omega)=\frac{\bs{r}_c^\prime(\omega)}{\|\bs{r}_c^\prime(\omega)\|},\quad
  \bs{e}_2(\omega)=\bs{e}_3(\omega)\times\bs{e}_1(\omega),\quad
  \bs{e}_3(\omega)=\frac{\bs{r}_c^\prime(\omega)\times\bs{r}_c^{\prime\prime}(\omega)}{\|\bs{r}_c^\prime(\omega)\times\bs{r}_c^{\prime\prime}(\omega)\|}.
\end{align}
In addition, the curvature and torsion of $C$ is given by
\begin{equation}\label{eq:kap-tau}
    \kappa:=\kappa(\omega) = \frac{\|\bs r_c'(\omega) \times \bs r_c''(\omega)\|}{\|\bs r_c'(\omega)\|^3} \quad \text{and}\quad
    \tau:=\tau(\omega) = \frac{(\bs r_c'(\omega) \times \bs r_c''(\omega)) \cdot \bs r_c'''(\omega)}{\|\bs r_c'(\omega) \times \bs r_c''(\omega)\|^2}.
\end{equation}

Remarkably, the moving TNB frame $ \{ \bs{e}_1(\omega), \bs{e}_2(\omega), \bs{e}_3(\omega) \} $ forms an orthonormal basis in $ \mathbb{R}^3 $. Then any vector in this moving frame has the following global representation in the usual Cartesian coordinates.
\begin{proposition}\label{localglobal}
  Given any vector
  $$
  \bs{v}=v_1\bs{e}_1+v_2\bs{e}_2+v_3\bs{e}_3,
  $$
  in the moving coordinate system, its position vector in the Cartesian coordinate system is
\begin{align}\label{xcoord}
    \bs{r}=\bs{r}_c(\omega)+\bs{v}
    =\bs{r}_c(\omega)+v_1\bs{e}_1+v_2\bs{e}_2+v_3\bs{e}_3.
\end{align}
\end{proposition}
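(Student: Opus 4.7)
The plan is to recognize that this proposition is essentially a reformulation of the parallelogram law of vector addition, combined with the fact that the moving frame $\{\bs{e}_1(\omega),\bs{e}_2(\omega),\bs{e}_3(\omega)\}$ forms an orthonormal basis of $\mathbb{R}^3$ at each parameter value $\omega$.

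First, I would recall that by \eqref{movingorth}, at each fixed $\omega$ the triple $\{\bs{e}_1(\omega),\bs{e}_2(\omega),\bs{e}_3(\omega)\}$ is an orthonormal set in $\mathbb{R}^3$: $\bs{e}_1$ is a unit vector along $\bs{r}_c'(\omega)$, $\bs{e}_3$ is a unit vector orthogonal to the osculating plane (which contains $\bs{r}_c'(\omega)$ and $\bs{r}_c''(\omega)$), and $\bs{e}_2=\bs{e}_3\times\bs{e}_1$ is therefore a unit vector orthogonal to both. Since three mutually orthogonal unit vectors are linearly independent, they constitute an orthonormal basis of $\mathbb{R}^3$. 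Consequently, the coefficients $v_1,v_2,v_3$ uniquely identify the vector $\bs{v}=v_1\bs{e}_1+v_2\bs{e}_2+v_3\bs{e}_3$ as a bona fide element of $\mathbb{R}^3$, and the right-hand side of \eqref{xcoord} is a well-defined point in Cartesian space.

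Next I would identify the moving-frame description of the point: by construction of the curvilinear coordinate system, the point under consideration is located by starting at the centerline position $\bs{r}_c(\omega)$ and then translating by the vector $\bs{v}$ expressed in the local frame. Since Euclidean space is an affine space, the Cartesian position vector of a translated point equals the Cartesian position vector of the base point plus the translation vector, yielding $\bs{r}=\bs{r}_c(\omega)+\bs{v}$. Substituting the basis expansion of $\bs{v}$ into this identity produces \eqref{xcoord}.

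There is no genuine obstacle in this argument; it is a direct consequence of the affine structure of $\mathbb{R}^3$ together with the orthonormality established in \eqref{movingorth}. The only point that deserves a sentence of care is making explicit that $\bs{v}$ is interpreted first as an abstract linear combination in the local frame and then identified, via the orthonormal basis, with a vector in the ambient Cartesian space — so that the addition $\bs{r}_c(\omega)+\bs{v}$ in \eqref{xcoord} is meaningful. Once this identification is in place, the proposition follows immediately, and it will serve in the sequel as the precise dictionary between the parameter variables $\bs{\xi}=(r,\theta,\omega)^\top$ and the physical variables $\bs{x}=(x,y,z)^\top$ whenever concrete choices of $(v_1,v_2,v_3)$ (encoding the cross-section) are substituted.
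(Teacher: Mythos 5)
Your proof is correct and matches the paper's treatment: the paper states Proposition \ref{localglobal} without any proof, regarding it as an immediate consequence of the orthonormality of the TNB frame in \eqref{movingorth} and ordinary vector addition in $\mathbb{R}^3$, which is precisely the content of your argument.
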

\noindent
In Figure~\ref{moveframev}, we illustrate the moving orthonormal basis and its corresponding position vector $\bs{r}$ in Cartesian coordinates.

\begin{figure}[h!]
  \centering
  \begin{minipage}[c]{0.45\hsize}
    \centering
    \subfigure[]{\includegraphics[width=.6\textwidth]{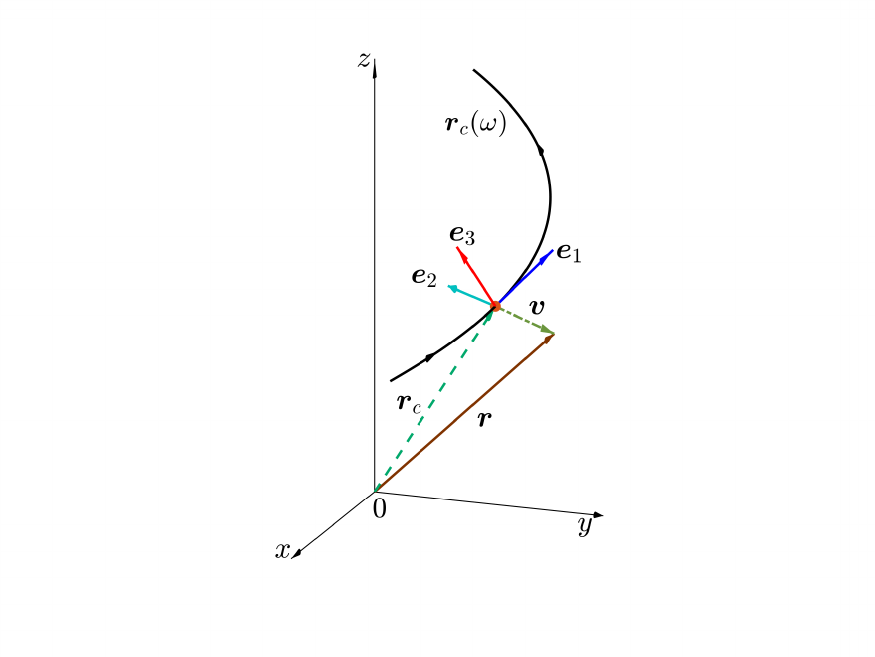}}
  \end{minipage} \hspace{-1cm}
  \begin{minipage}[c]{0.45\hsize}
    \centering
    \subfigure[]{\includegraphics[width=0.95\textwidth]{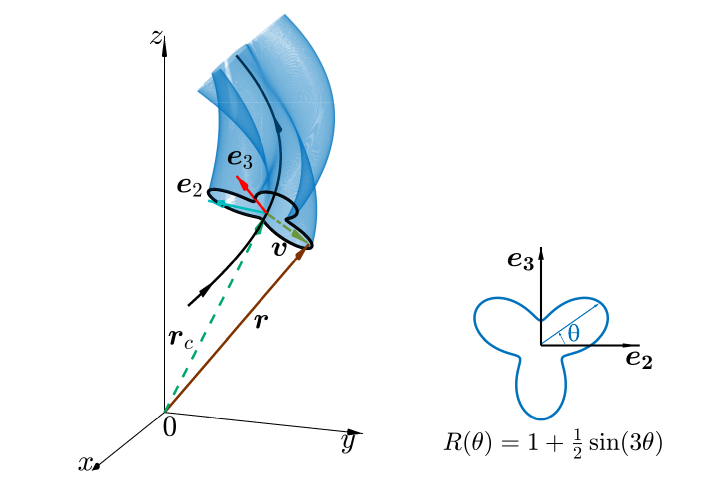}}
  \end{minipage}
  \vspace{-10pt}
  \caption{(a): Illustration of a vector  $\bs v$ in the moving TNB frame and its  position vector $\bs r$ in the Cartesian coordinates. (b): Illustration of a pipe and the corresponding planar curve $R(\theta)$.}
  \label{moveframev}
\end{figure}

The relation established in Proposition \ref{localglobal} provides a systematic approach for constructing curvilinear coordinates in the context of a pipe with centerline $\bs{r}_c(\omega)$. 
Specifically, cylindrical coordinates can be derived by setting $\bs{r}_c(\omega) = (0,0,\omega)$, which corresponds to the choice of basis vectors $ \{ \bs{e}_1, \bs{e}_2, \bs{e}_3 \} = \{ \bs{k}, \bs{i}, \bs{j} \} $, where $\bs{i}$, $\bs{j}$, and $\bs{k}$ denote the unit vectors along the Cartesian $x$-, $y$-, and $z$-axes, respectively. The circular cross-section is then parametrized as  
\begin{align}\label{curve-v2}
    \bs{v} := r \cos \theta  \, \bs{e}_2 + r \sin \theta  \, \bs{e}_3,
\end{align}
which naturally yields the cylindrical coordinate formulation as a direct consequence of \eqref{xcoord}. 

{
Particularly, with a circular cross-section \eqref{curve-v2} for $r \in (0,R)$ with $R$ constant, one can immediately obtain from \eqref{xcoord} the corresponding coordinate system for the pipe, i.e.,
\begin{equation}\label{eq:coord-R}
\bs x = \bs r_c + r \cos \theta  \, \bs{e}_2 + r \sin \theta  \, \bs{e}_3.
\end{equation}
It is easy to deduce that the coordinate system \eqref{eq:coord-R} is orthogonal, that is, $\rd \bs x \cdot \rd \bs x = 0$ if the torsion $\tau=0$ (i.e., $C$ is a planer curve). 
Then many numerical methods can be applied to solve the surface PDEs, such as finite element methods and spectral methods.
On the other hand, although the coordinate system \eqref{eq:coord-R} is non-orthogonal when $\tau \neq 0$, it is shown in \cite{Kumar2020JFM}, for instance, that using a transformation $\theta=\vartheta+\phi$ with $\phi=-\int_{\omega_0}^\omega \tau(t) \|\boldsymbol{r}_c'(t)\| \mathrm{d}t$, the derived coordinate system $(r, \vartheta, \omega)$ becomes orthogonal. However, the computational domains become complex with range in the $\theta$ direction varying from $\theta \in [0, 2\pi)$ to $\vartheta=\theta-\phi \in {\rm mod}([0, 2\pi)-\phi, 2\pi)$.

On the other hand, constructing orthogonal coordinate system becomes more complicated, if one considers general cross-sectional functions $R(\theta, \omega)$, 
that is, 
\begin{equation}\label{CoorSys}
    \bs x = \bs r_c + rR(\theta,\omega)\cos\theta \, \bs e_2 + r R(\theta,\omega)\sin\theta \, \bs e_3,
\end{equation}
% Then \eqref{xcoord} 
for $r \in (0,1)$. 
In the following, based on this non-orthogonal coordinate system with arbitrary cross-sectional profiles $R(\theta,\omega)$ (generating non-circular, non-uniform, and axially varying geometries), we will give the corresponding differential operators and design our efficient high-order numerical schemes via the finite difference method.

\begin{table}[!t]
    \centering
    \caption{Several open- and closed-center curves (depicted in Figure \ref{fig:Openline}).}
    \label{tab:curve}
    \renewcommand{\arraystretch}{1.8}
    \vspace*{-10pt}
    \begin{tabular}{|c|c|c|ccc}
        \hline
         & Name & Centerline $\bs r_c(\omega)$ \\
         \hline
        (a) & \makecell[c]{Rounded corner \\ L-shaped curve} & \makecell[c] {$\big( (\cos^p\omega + \sin^p\omega)^{-\frac{1}{p}}\cos\omega, \, (\cos^p\omega + \sin^p\omega)^{-\frac{1}{p}}\sin\omega, \, 0 \big)$,\\[3pt]
        $p=8, \ \omega \in [\pi,\frac{3\pi}{2}]$} \\[3pt] 
        \hline
        (b) & \makecell[c]{Rounded corner \\ V-shaped curve} & $\big( \omega, \; 2\sqrt{(\omega-3)^2+\varepsilon^2}, \; 0 \big),\; \varepsilon=\frac12,\, \omega \in [0,6]$ \\
        \hline
        (c) & Cylindrical helix & $( 8\cos\omega, \; 8\sin\omega, \; \omega ),\; \omega \in [0,8\pi]$ \\
        \hline
        (d) & Conical helix & $\Big( \frac{\sqrt{3}}{3}\omega\cos\omega, \; \frac{\sqrt{3}}{3}\omega\sin\omega, \; \omega \Big),\; \omega \in [0, 8\pi]$ \\[3pt]
        \hline
        (e) & Circular curve & $( 2\cos\omega, \; 2\sin\omega, \; 0 ),\; \omega \in [0, 2\pi)$ \\
        \hline
        (f) & \makecell[c]{Squircle-bounded \\ curve} & \makecell[c]{$\big( (\cos^p\omega + \sin^p\omega)^{-\frac{1}{p}}\cos\omega, \;
    (\cos^p\omega + \sin^p\omega)^{-\frac{1}{p}}\sin\omega, \; 0 \big),$\\
    $p=8,\; \omega \in [0, 2\pi)$} \\[3pt]
        \hline
        (g) & Ellipse curve & $( 2 \cos\omega, \; \sin\omega, \; 0 ),\; \omega \in [0, 2\pi)$ \\
        \hline
        (h) & Curved triangle & $\Big( 2 \cos \omega, \; \dfrac{ \sin\omega}{1 - k \sin\omega}, \; 0 \Big),\; k=\frac12, \; \omega \in [0, 2\pi)$ \\[3pt]
        \hline
    \end{tabular}
\end{table}

{
Before introducing differential operators on the surface of pipe geometries, let us consider the pipe geometry given by \eqref{CoorSys} with $r=1$,  i.e.,
\begin{align}\label{curve-v}
    \bs{v} := R(\theta,\omega)\cos \theta \, \bs{e}_2 + R(\theta,\omega)\sin \theta \, \bs{e}_3,
\end{align}
where $R(\theta,\omega) > 0$ is a shape function related to the angles $\theta$ and $\omega$, 
satisfying the condition $R(\theta+2\pi,\omega)=R(\theta,\omega)$. 
Then thanks to Proposition \ref{localglobal}, one can immediately obtain various pipe with different centerline curves and surfaces.
As an interesting example, we fix $R(\theta, \omega) \equiv R_0$ a positive constant in \eqref{curve-v} and consider several open- and closed-center curves and their corresponding generated pipes, which are widely used in mechanical and structural design (including applications in fluid dynamics), heat exchange, fluid disturbance and aesthetic applications, etc.
In Table \ref{tab:curve}, we list several open- and closed-center curves. The corresponding graphics and the generated pipe geometries are  depicted in Figure \ref{fig:Openline}.

\begin{figure}[!t]
    \centering
    \includegraphics[width=0.24\linewidth]{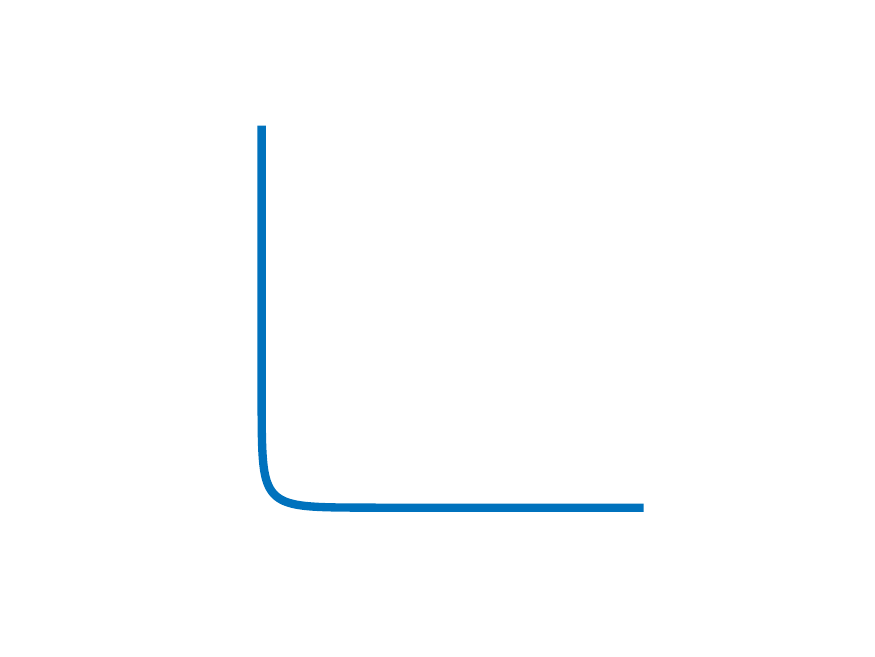} 
    \includegraphics[width=0.24\linewidth]{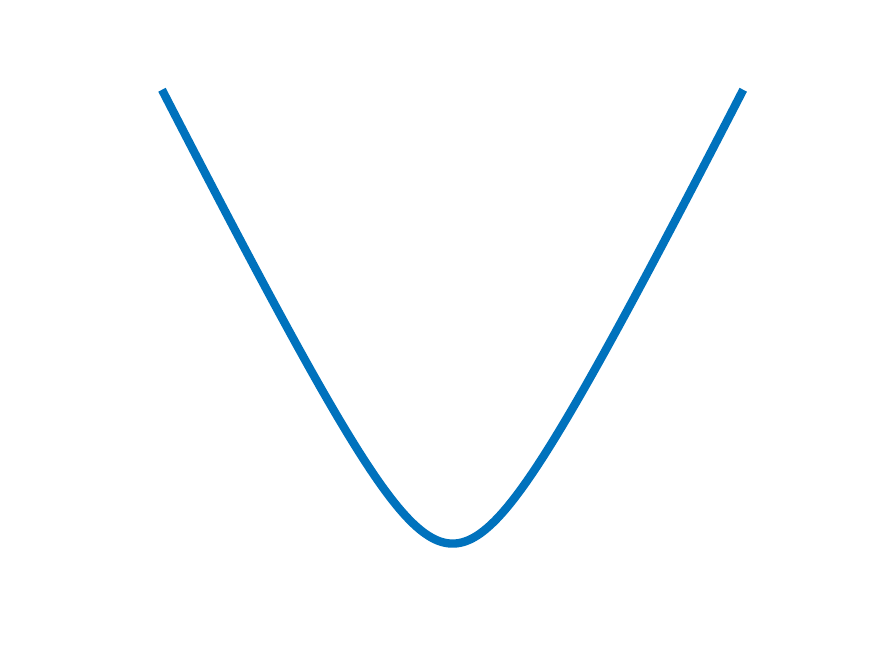}
    \includegraphics[width=0.24\linewidth]{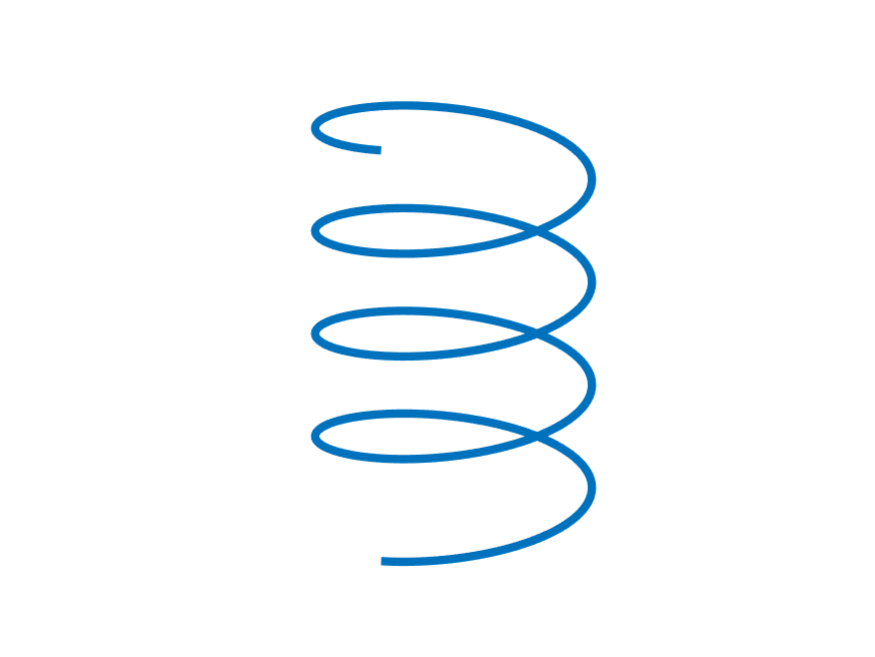}
    \includegraphics[width=0.24\linewidth]{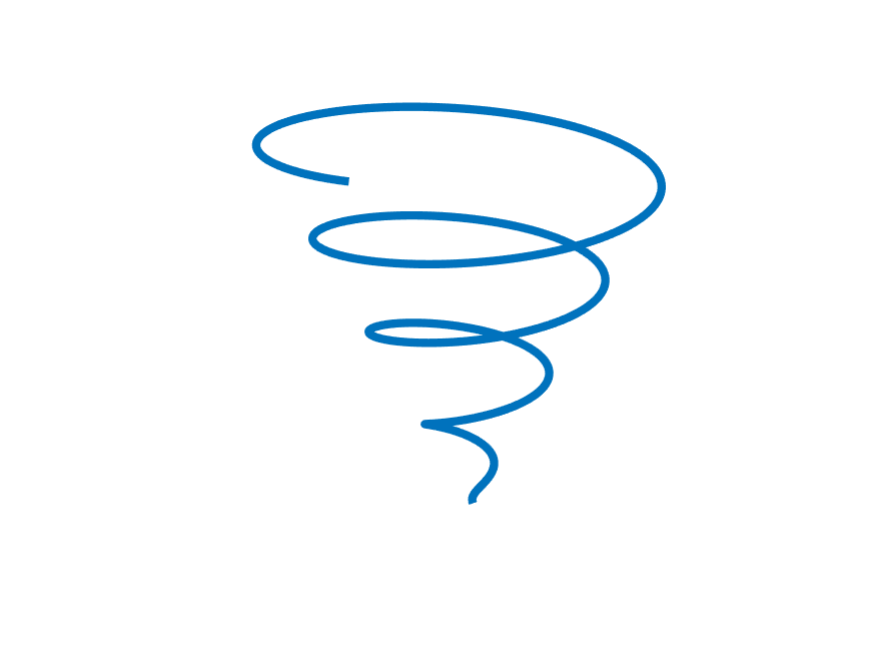} \\
    \subfigure[]{\includegraphics[width=0.24\linewidth]{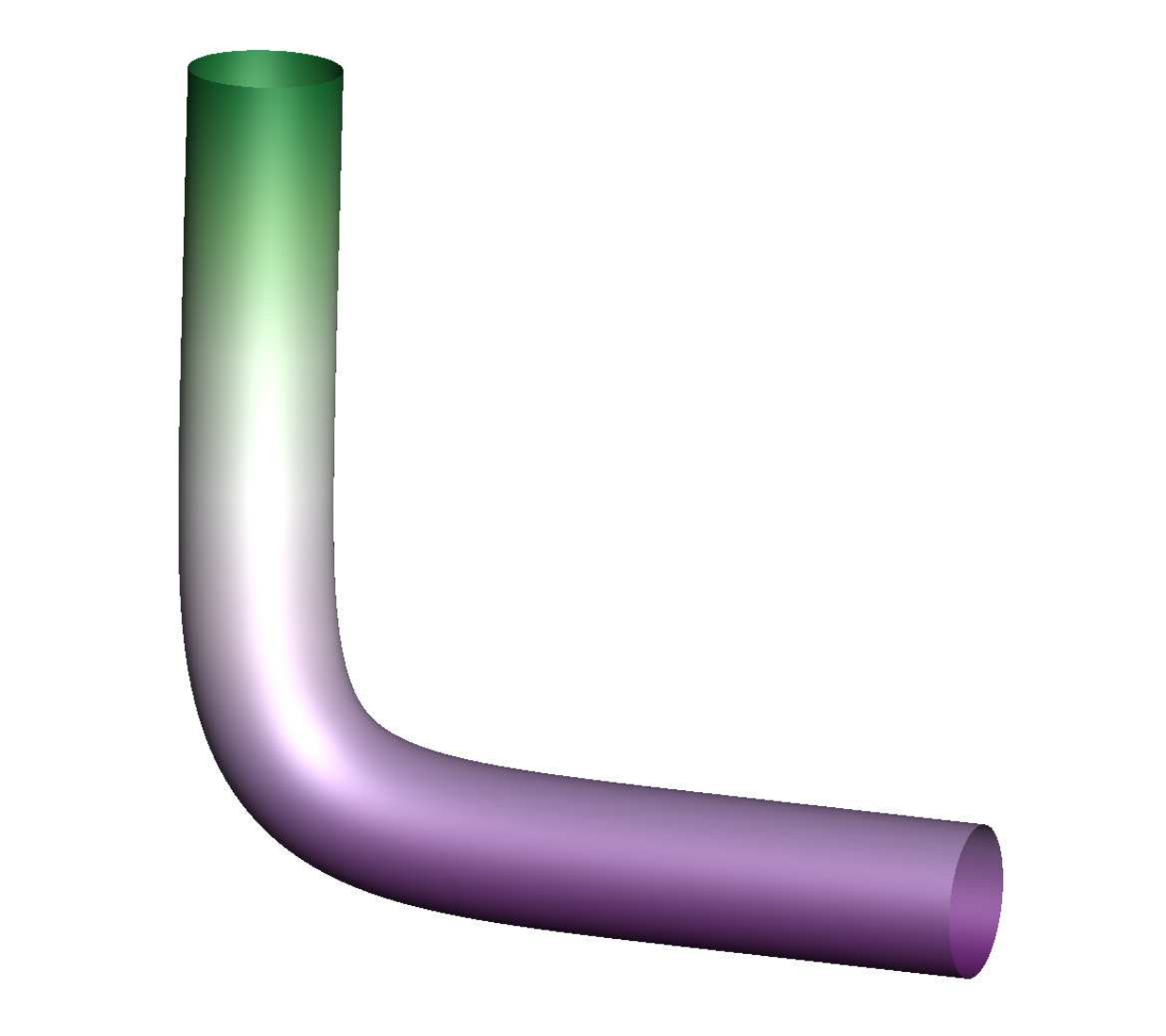} }
    \subfigure[]{\includegraphics[width=0.24\linewidth]{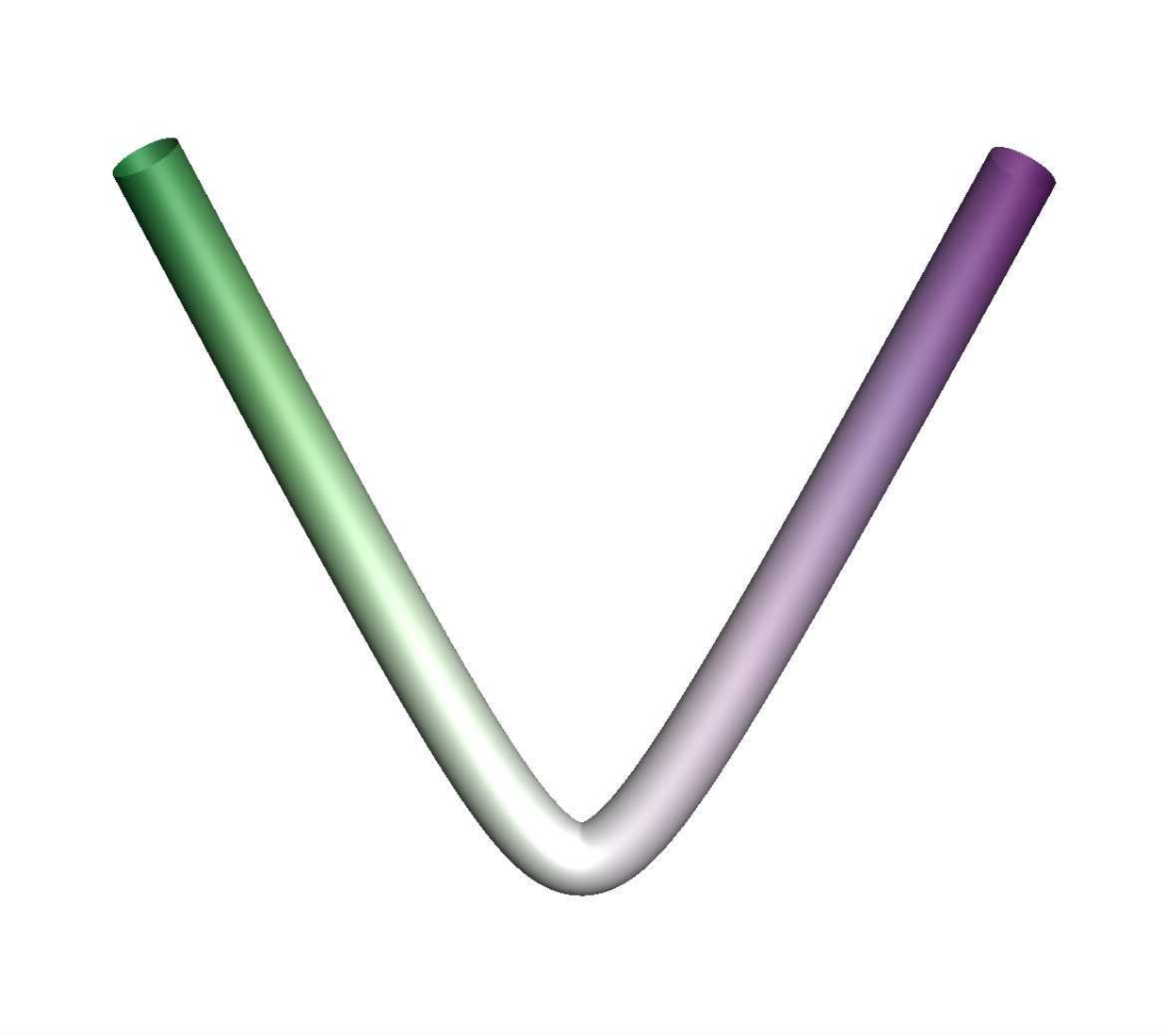}}
    \subfigure[]{\includegraphics[width=0.24\linewidth]{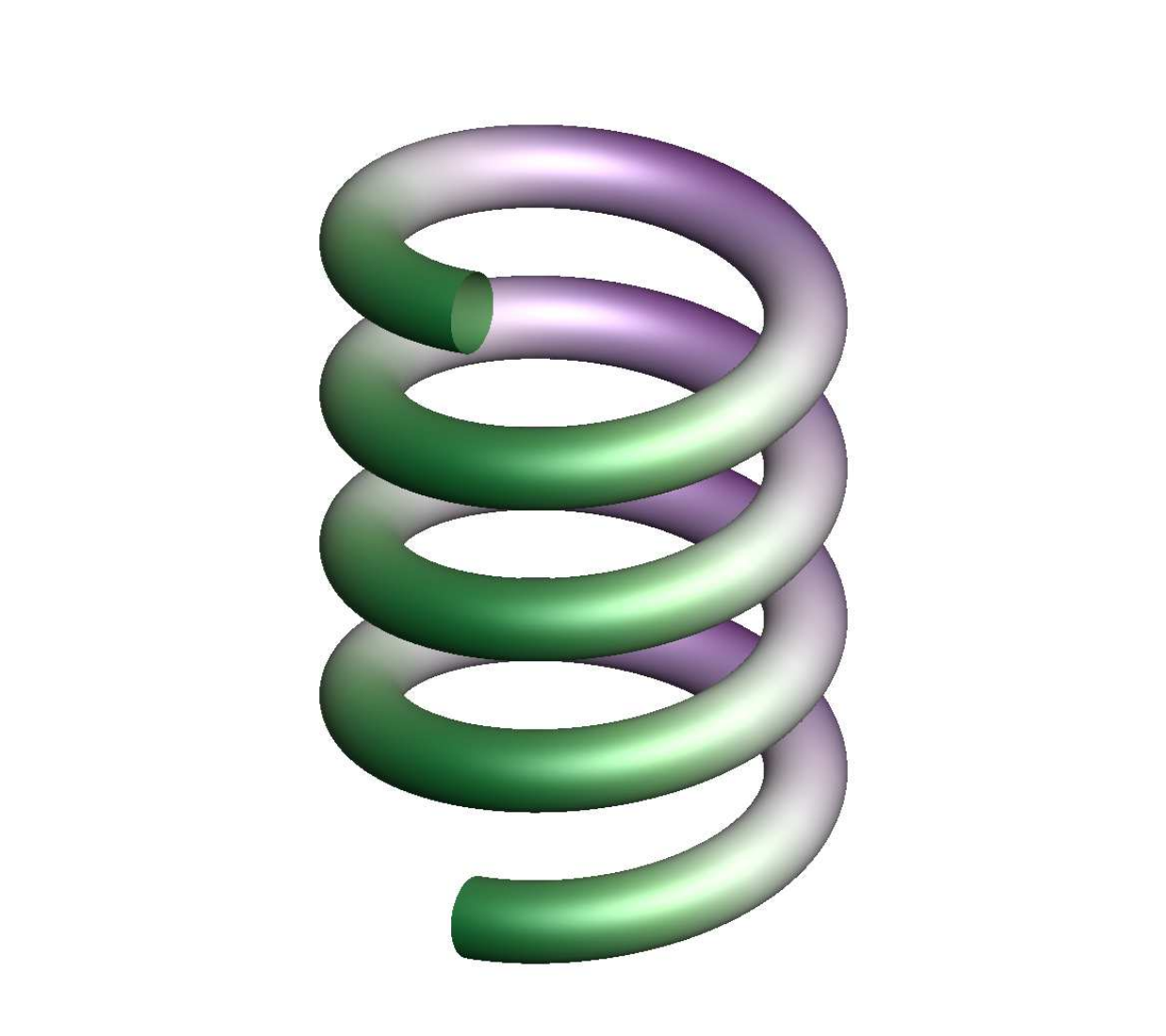}}
    \subfigure[]{\includegraphics[width=0.24\linewidth]{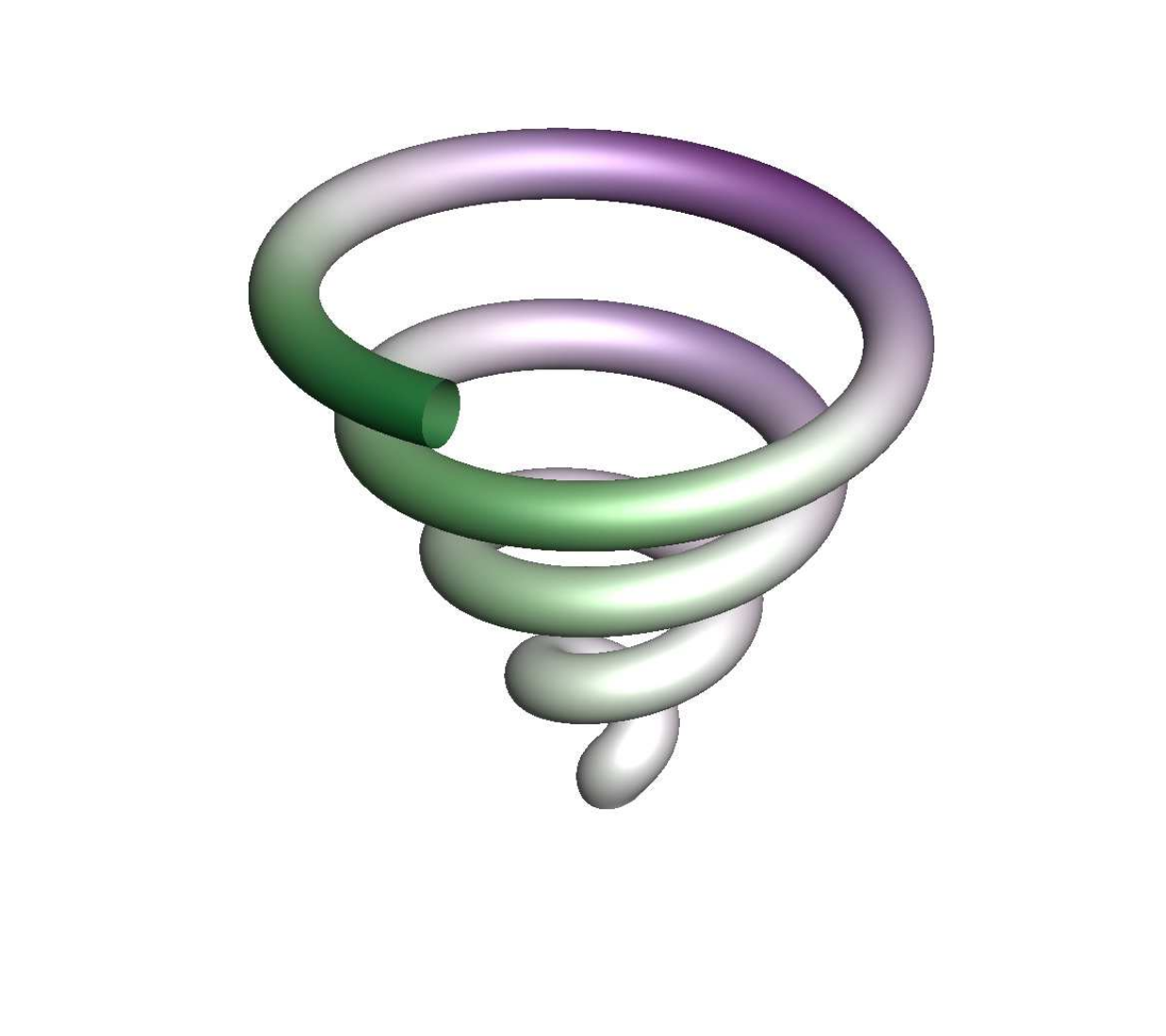}} \\[6pt]
    \includegraphics[width=0.24\linewidth]{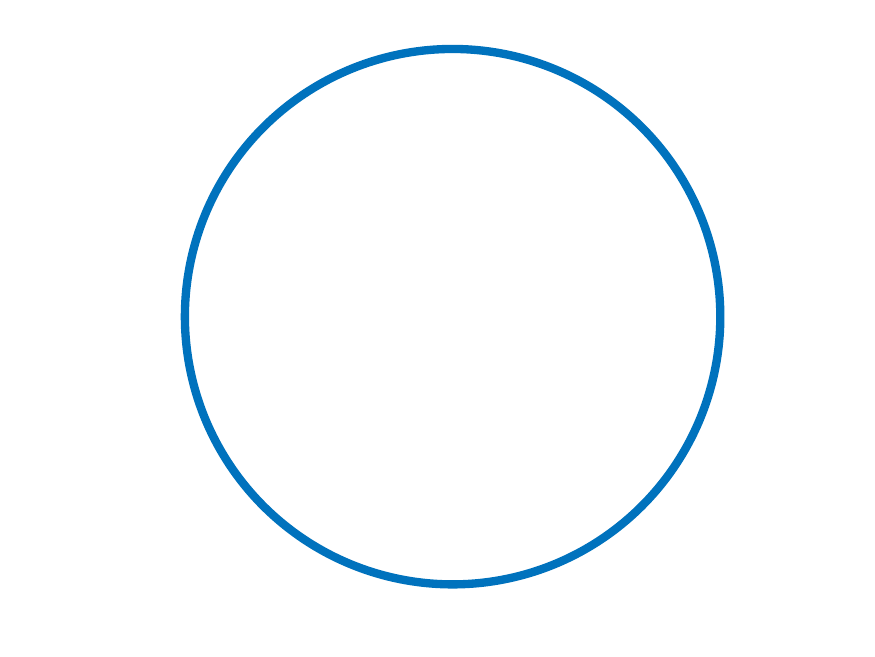}
    \includegraphics[width=0.24\linewidth]{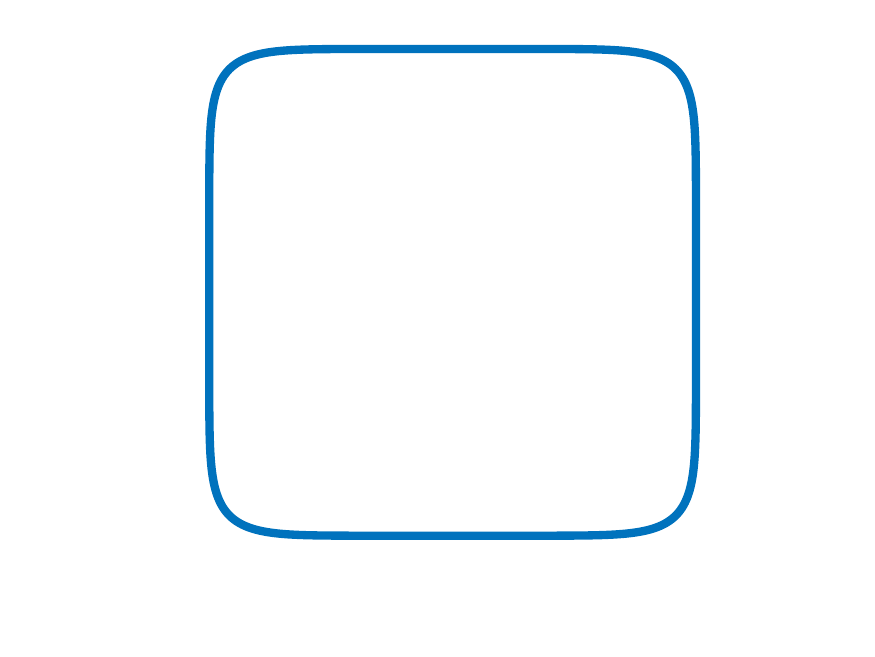}
    \includegraphics[width=0.24\linewidth]{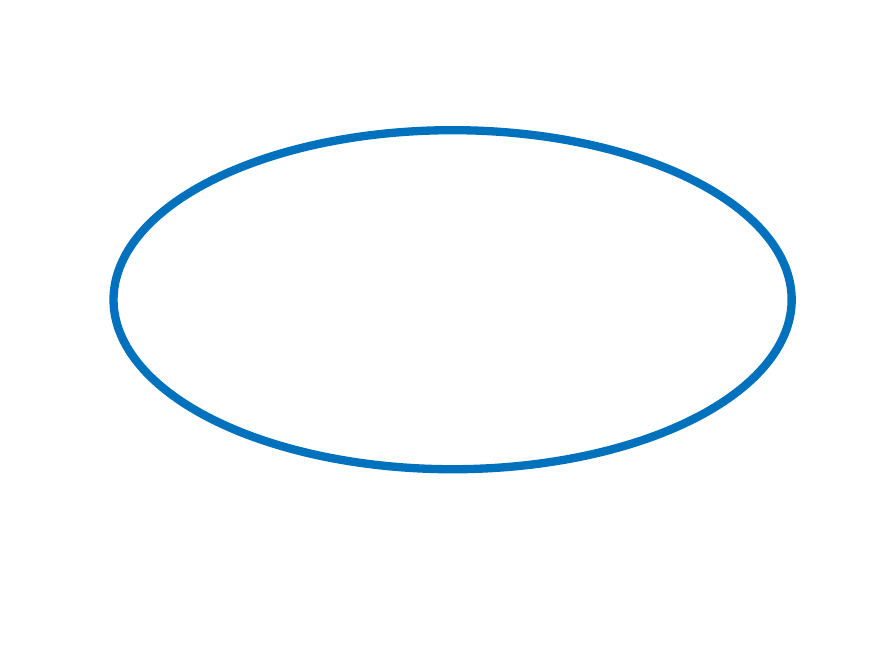}
    \includegraphics[width=0.24\linewidth]{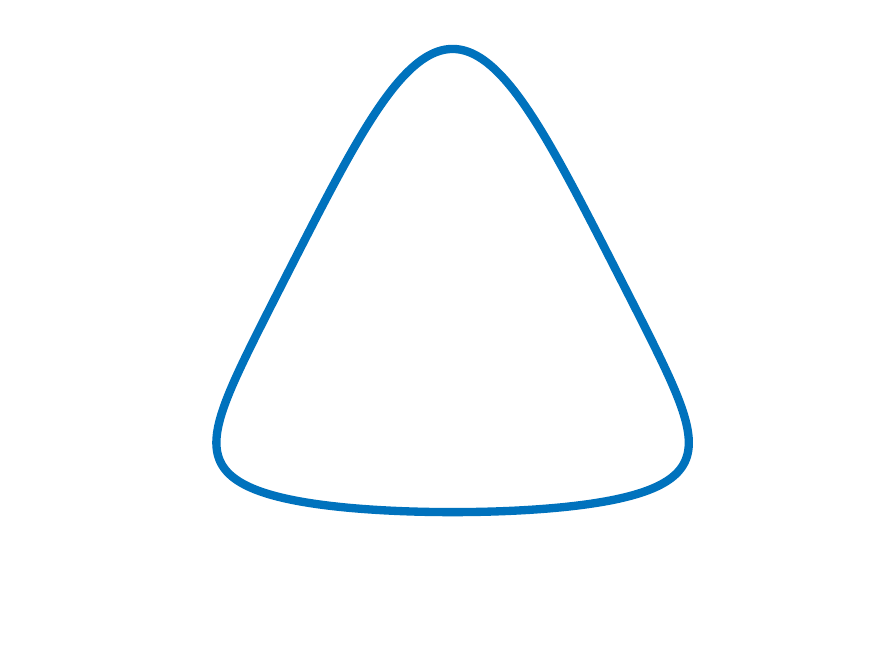} \\
    \subfigure[]{\includegraphics[width=0.24\linewidth]{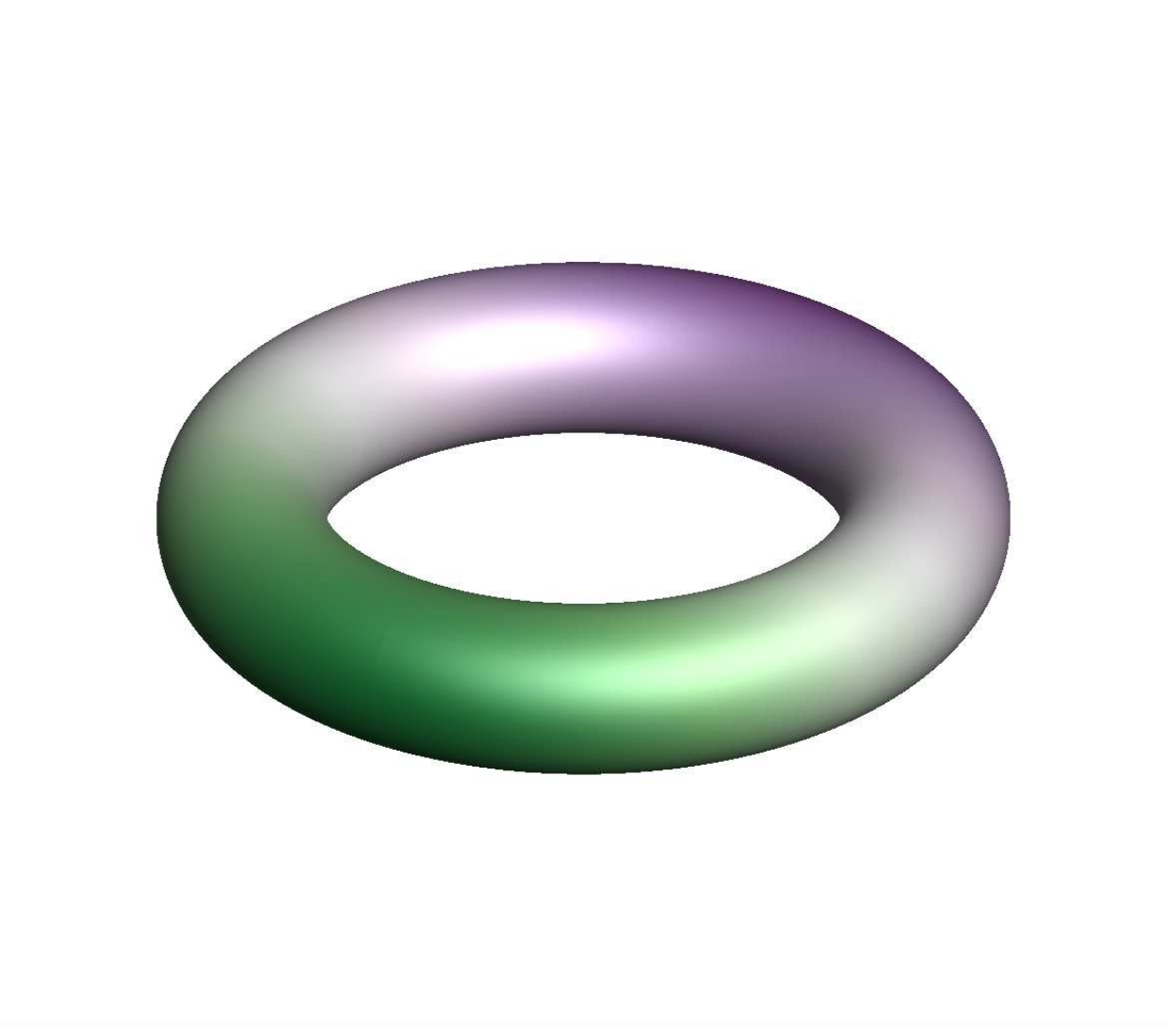}}
    \subfigure[]{\includegraphics[width=0.24\linewidth]{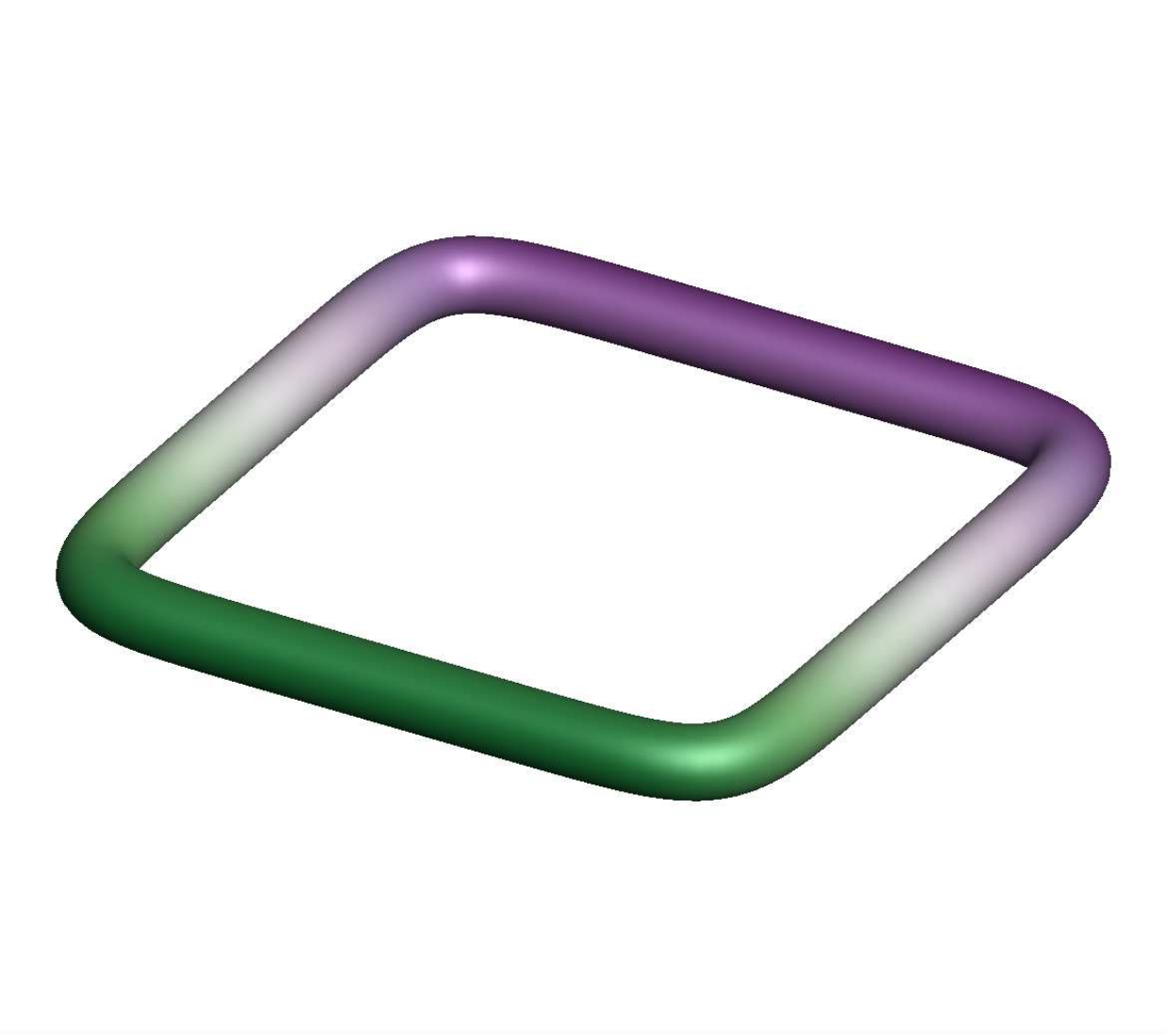}}
    \subfigure[]{\includegraphics[width=0.24\linewidth]{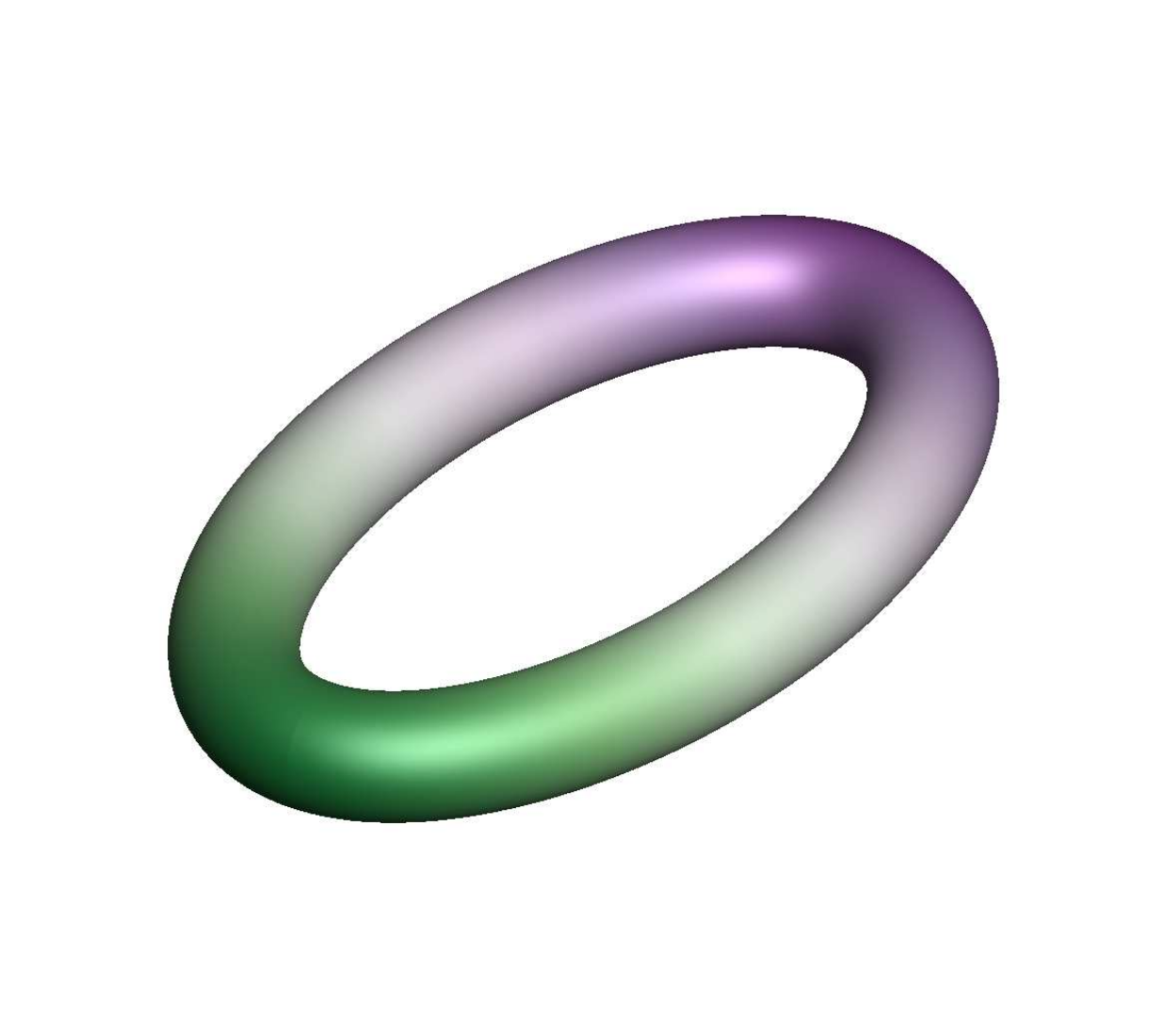}}
    \subfigure[]{\includegraphics[width=0.24\linewidth]{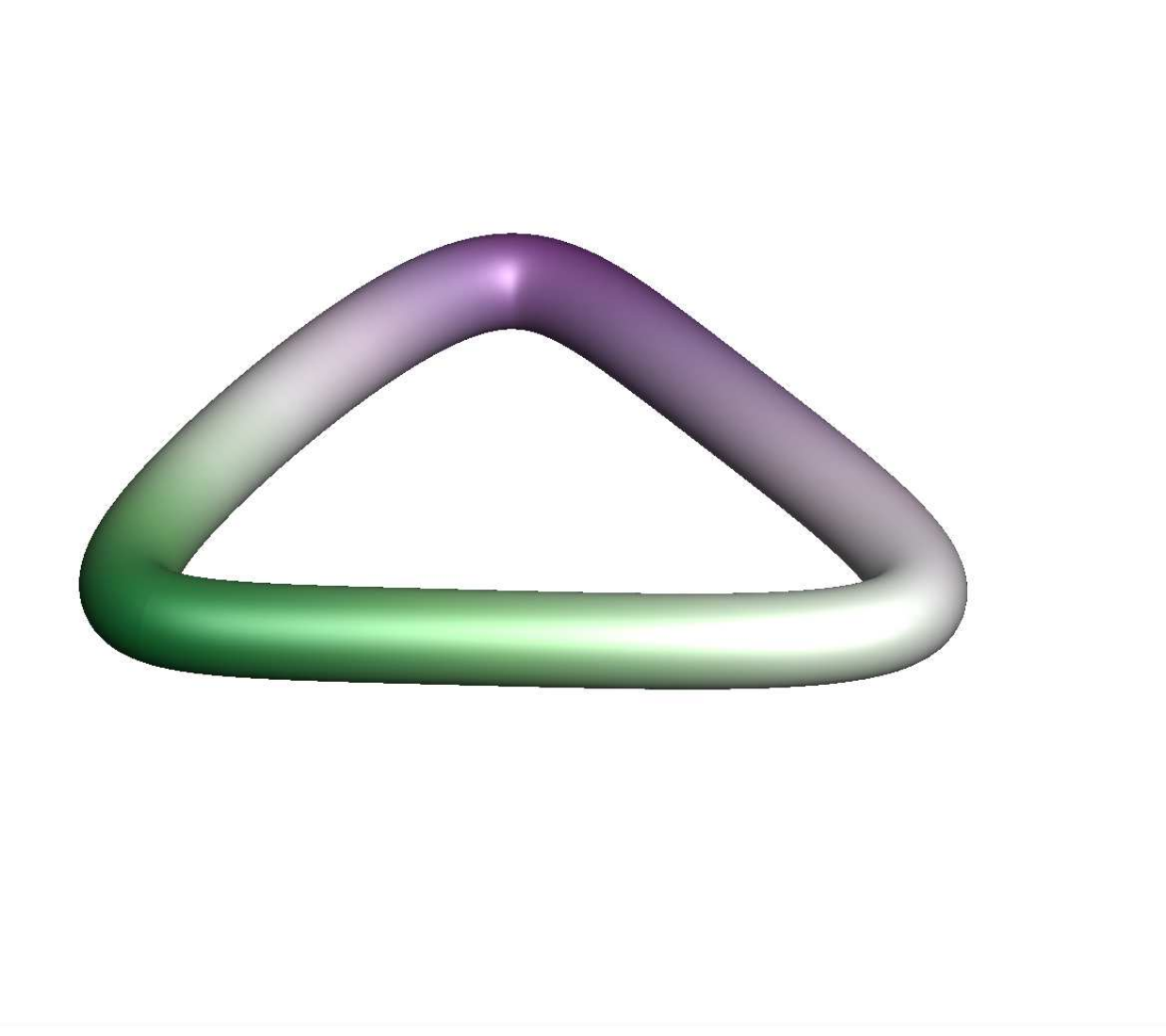}}
    \vspace*{-10pt}
    \caption{Several pipe geometries generated by \eqref{curve-v} with $R(\theta,\omega) \equiv R_0$ under different centerline curves given by Table \ref{tab:curve}.}
	\label{fig:Openline}  
\end{figure}

\subsection{Differential operators in curvilinear  coordinate system}\label{RieaLap}
Next, we will give the definitions of some differential operators defined in general orthogonal curvilinear coordinates,
and refer to \cite{morse1953book,ONeill2006} for more details.
Define the Jacobi matrix of the transformation from the parameter variable $\bs \xi$ to the physical variable $\bs x$ and the corresponding Jacobian as
\begin{equation*}
    \bs J = \frac{\partial (x,y,z)}{\partial (r,\theta, \omega)}
    \quad \text{and}\quad 
    \mathbb J = {\rm det} (\bs J),
\end{equation*}
respectively. In the following, we shall always assume that the Jacobian $\mathbb J > 0$, which means that the parameter
variable $\bs \xi$ forms a right-handed coordinate system.

To characterize the differential operators in the curvilinear $\bs{\xi}$-coordinates, we introduce the covariant metric tensor (cf. \cite[Chapter 2]{Vladimir2010}):
$$
\bs{G} = \bs{J}^\top \bs{J} = (g_{ij})_{i,j=1,2,3},
$$
and the contravariant metric tensor
$
\bs{G}^{-1} = (g^{ij})_{i,j=1,2,3}. 
$
Thanks to (3.1.20) of 
\cite{Jost2008}, define $g:=\det(\mathbf{G})=\mathbb{J}^2$, we have
\begin{align}\label{Lapnew-gene}
    \Delta U(\bs{x})
    =\sum_{i,j=1}^{3}\frac{1}{\sqrt{g}}\frac{\partial}{\partial{\xi}_j}\Big(\sqrt{g}g^{ij}\frac{\partial u}{\partial{\xi}_i}\Big) 
    := \widetilde \Delta u_{\bs \xi},
\end{align}
where $u(\bs{\xi}):=U(\bs{x}(\bs{\xi}))$.

\begin{theorem}\label{thm:jandg}
For the coordinate system \eqref{CoorSys}, we have the Jacobain and the contravariant metric tensor:
\begin{align}\label{eqc3:Jresn}
    % \mathbb J =  r D^2 \|\bs r_c'\|(1-\sigma \kappa \cos\theta) := r D^2 \rho(\bs \xi)
    \mathbb J =  r R^2(\theta,\omega) \|\bs r_c'\|(1-r \kappa R(\theta,\omega) \cos\theta) := r R^2(\theta,\omega) \rho(r,\theta,\omega)
\end{align}
and $\bs G^{-1}=(\bs G^{-1})^\top = (g^{ij})_{i,j=1,2,3}$ with
\begin{align}\begin{split}\label{eqc3:Ginvresn}
    &g^{11} = \frac{\tilde{R}}{R^4} + \frac{r^2 \hat R^2}{\rho^2 R^2},\quad g^{22} = \frac{1}{r^2 R^2} + \frac{\tau^2 \|\bs r_c'\|^2}{\rho^2},\quad g^{33} = \frac{1}{\rho^2}, \\
    &g^{12} =  -\frac{\partial_\theta R}{r R^3} + \frac{r \tau \|\bs r_c'\| \hat R}{\rho^2 R}, \quad g^{13} = -\frac{r \hat R}{\rho^2 R},\quad g^{23} = -\frac{\tau \|\bs r_c'\|}{\rho^2},
\end{split}\end{align}
where $\tilde{R} = R^2 + (\partial_\theta R)^2$ and $\hat R = \partial_\omega R - \tau \|\bs r_c'\| \partial_\theta R$, and the curvature $\kappa$ and torsion $\tau$ are given by \eqref{eq:kap-tau}.
Furthermore, we have $\mathbb J>0$ provided $\theta \in [\pi/2, 3\pi/2]$ and
\begin{equation}\label{eqc3:Rconn}
    \max_{\omega \in \bar I_\omega} R(\theta,\omega) < \kappa^{-1} \sec \theta, \quad \text{for }\; \theta \in [0, \pi/2) \cup (3\pi/2, 2\pi).
\end{equation}
\end{theorem}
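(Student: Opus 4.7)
The plan is to work entirely in the orthonormal moving basis $\{\bs e_1,\bs e_2,\bs e_3\}$, in which every computation reduces to standard $\mathbb R^3$ algebra. First I would differentiate \eqref{CoorSys} in $r$, $\theta$, $\omega$, applying the Frenet--Serret formulas from \eqref{eqc2:TNBderiva} with the chain rule (so $\bs e_1'=\kappa\|\bs r_c'\|\bs e_2$, $\bs e_2'=\|\bs r_c'\|(-\kappa\bs e_1+\tau\bs e_3)$, $\bs e_3'=-\tau\|\bs r_c'\|\bs e_2$, and $\bs r_c'=\|\bs r_c'\|\bs e_1$). This yields component representations in the $\{\bs e_1,\bs e_2,\bs e_3\}$ frame of the form
\begin{align*}
\bs x_r &= \bigl(0,\;R\cos\theta,\;R\sin\theta\bigr),\\
\bs x_\theta &= \bigl(0,\;r(\partial_\theta R\cos\theta-R\sin\theta),\;r(\partial_\theta R\sin\theta+R\cos\theta)\bigr),\\
\bs x_\omega &= \bigl(\|\bs r_c'\|(1-r\kappa R\cos\theta),\;r(\partial_\omega R\cos\theta-\|\bs r_c'\|\tau R\sin\theta),\;r(\|\bs r_c'\|\tau R\cos\theta+\partial_\omega R\sin\theta)\bigr).
\end{align*}

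Next I would compute $\mathbb J=\det[\bs x_r,\bs x_\theta,\bs x_\omega]$. Since $\bs x_r$ and $\bs x_\theta$ have vanishing $\bs e_1$-component, cofactor expansion along the first row collapses the determinant to $\|\bs r_c'\|(1-r\kappa R\cos\theta)$ times a $2\times 2$ determinant, whose trigonometric cancellations simplify to exactly $rR^2$, giving \eqref{eqc3:Jresn}. Rather than invert $\bs G=\bs J^\top\bs J$ directly, I would obtain the contravariant entries $g^{ij}=\nabla\xi_i\cdot\nabla\xi_j$ through the reciprocal basis relations
\begin{equation*}
\nabla r=\frac{\bs x_\theta\times\bs x_\omega}{\mathbb J},\qquad \nabla\theta=\frac{\bs x_\omega\times\bs x_r}{\mathbb J},\qquad \nabla\omega=\frac{\bs x_r\times\bs x_\theta}{\mathbb J}.
\end{equation*}
The cross product $\bs x_r\times\bs x_\theta$ is purely along $\bs e_1$ and equals $rR^2\bs e_1$, so $\nabla\omega=\bs e_1/\rho$; this immediately gives $g^{33}=1/\rho^2$, $g^{13}=-r\hat R/(R\rho^2)$, and $g^{23}=-\tau\|\bs r_c'\|/\rho^2$ after reading off the $\bs e_1$-components of the other reciprocal vectors. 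The remaining entries $g^{11}$, $g^{12}$, $g^{22}$ follow from dot products of the full expressions for $\nabla r$ and $\nabla\theta$; the trig identities $\cos^2\theta+\sin^2\theta=1$ collapse the quadratic terms to $\tilde R=R^2+(\partial_\theta R)^2$ in the appropriate places, and an analogous cancellation produces the factor $\hat R=\partial_\omega R-\tau\|\bs r_c'\|\partial_\theta R$.

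For positivity of $\mathbb J$ I would argue by cases on the sign of $\cos\theta$. When $\theta\in[\pi/2,3\pi/2]$ we have $\cos\theta\le 0$, so $1-r\kappa R\cos\theta\ge 1>0$ automatically (using $\kappa>0$, $r,R>0$). When $\cos\theta>0$, the assumption \eqref{eqc3:Rconn} gives $\kappa R\cos\theta<1$ uniformly in $\omega$, and since $r\in(0,1]$ this yields $r\kappa R\cos\theta<1$, hence $\mathbb J>0$.

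The main obstacle is keeping the algebra for $\bs x_\omega$ and for the cross products $\bs x_\theta\times\bs x_\omega$, $\bs x_\omega\times\bs x_r$ under control: each contains four trigonometric terms, and the precise combinations that give $\hat R$ versus $\tilde R$ versus $\rho$ are easy to confuse. Organizing the work by isolating the $\bs e_1$-contribution (which contains all the $\rho$ and $\hat R$ structure) from the $(\bs e_2,\bs e_3)$-contribution (which is essentially the planar polar computation with radius $R(\theta,\omega)$) is what makes the identification with \eqref{eqc3:Ginvresn} transparent.
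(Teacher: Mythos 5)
Your proposal is correct, and for the harder half of the theorem it takes a genuinely different computational route from the paper. Both arguments begin identically: you and the paper express the columns of the Jacobian in the moving frame (your $\bs x_r,\bs x_\theta,\bs x_\omega$ are exactly the columns of the paper's $\bs J_{\rm TNB}$, with $\bs J=\bs E\,\bs J_{\rm TNB}$ and $\bs E$ orthogonal so that $\mathbb J=\det\bs J_{\rm TNB}$), and the cofactor expansion along the first row together with the positivity case split on $\operatorname{sign}(\cos\theta)$ is the same in both. The divergence is in how $\bs G^{-1}$ is obtained: the paper first computes all six entries of the covariant tensor $\bs G=\bs J^\top\bs J$ and then inverts the $3\times3$ matrix, whereas you compute the contravariant entries directly as $g^{ij}=\nabla\xi_i\cdot\nabla\xi_j$ via the reciprocal basis $\nabla\xi_i=(\bs x_{\xi_j}\times\bs x_{\xi_k})/\mathbb J$. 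I checked your route entry by entry: $\bs x_r\times\bs x_\theta=rR^2\bs e_1$ does give $\nabla\omega=\bs e_1/\rho$ and hence $g^{33}$, $g^{13}$, $g^{23}$ from the $\bs e_1$-components alone, and the remaining dot products reproduce $\tilde R$ and $\hat R$ exactly as in \eqref{eqc3:Ginvresn}. Your approach buys a cleaner derivation of the contravariant tensor (three cross products and six dot products instead of a symbolic matrix inversion, with the $\rho$- and $\hat R$-structure isolated in the $\bs e_1$-components), at the cost of not recording the covariant entries $g_{ij}$, which the paper displays and which can be useful elsewhere. One cosmetic remark: the paper takes $r\in(0,1)$ in \eqref{CoorSys}, not $r\in(0,1]$, though this does not affect your positivity argument.
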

\begin{proof}
Considering the coordinate transformation for the curve given in \eqref{eqc2:curve}, and using \eqref{eqc2:length}-\eqref{eqc2:TNBderiva}, we have $\bs{J} = \bs{E} \bs{J}_{\rm{TNB}}$, where $\bs E = (\bs e_1, \bs e_2, \bs e_3)$ and
the TNB Jacobi matrix $\bs{J}_{\text{TNB}}$ is expressed as %\cite{doCarmo2016, Gray2017}
\begin{align*}
\bs{J}_{\rm{TNB}} =
\begin{pmatrix}
  0 & 0  & \rho \\
  R \cos\theta  & r\partial_\theta(R\cos\theta)  & r\partial_\omega(R\cos\theta) - r \tau R\|\bs r_c'\| \sin\theta \\
  R \sin\theta & r\partial_\theta(R\sin\theta) & r\partial_\omega(R\sin\theta) + r \tau R\|\bs r_c'\| \cos\theta
\end{pmatrix},
\end{align*}
where $\rho(r,\theta,\omega)=\|\bs r_c'\|(1-r \kappa R(\theta,\omega) \cos\theta)$.
This gives $\mathbb{J}$ in \eqref{eqc3:Jresn}. The positivity of $\mathbb J$ requires $rR\cos\theta <\kappa^{-1},$ which is obvious for $\cos\theta\le 0,$ i.e., $\theta\in [\pi/2,3\pi/2].$ Thus, it suffices to require $rR\cos\theta < \kappa^{-1}$ with $\cos\theta>0,$ i.e.,  \eqref{eqc3:Rconn}.
Finally, we immediately deduce $\bs{G}^{-1}$ shown in \eqref{eqc3:Ginvresn} by calculating the covariant metric tensor $\bs G = \bs G^\top = (g_{ij})_{i,j=1,2,3}$ with 
\begin{align*}\begin{split}%\label{Gma}
  &g_{11} = R^2,\quad g_{22} = r^2 \tilde{R},\quad g_{33} = \rho^2 + r^2 \tau^2 \|\bs r_c'\|^2 R^2 + r^2 (\partial_\omega R)^2, \\
  &g_{12} = rR \partial_\theta R,\quad g_{13}= rR \partial_\omega R, \quad g_{23} = r^2\tau \|\bs r_c'\| R^2 + r^2\partial_\theta R \partial_\omega R.
\end{split}\end{align*}
This ends our proof.
\end{proof}

As a direct consequence of \eqref{Lapnew-gene} and \eqref{eqc3:Ginvresn}, we can readily derive the Laplacian operator in curvilinear coordinate system, that is,
\begin{align*}
  \widetilde \Delta_{\bs \xi} u
  &= \frac{1}{rR^2\rho} \Big\{\frac{\partial}{\partial r}\Big( r\rho\Big(\frac{\tilde{R}}{R^2}+\frac{r^2 \hat R^2}{\rho^2}\Big)
  \frac{\partial u}{\partial r} \Big)
  +\frac{\partial}{\partial\theta}\Big( \Big(\frac{\rho}{r}+\frac{r\tau^2 \|\bs r_c'\|^{2}R^2}{\rho}\Big)
  \frac{\partial u}{\partial\theta} \Big)+r\frac{\partial}{\partial\omega}\Big( \frac{R^2}{\rho}
  \frac{\partial u}{\partial\omega} \Big) \\
  &\quad -\frac{\partial}{\partial\omega}\Big( \frac{r^2R \hat R}{\rho}
  \frac{\partial u}{\partial r} \Big) -\frac{\partial}{\partial r}\Big( \frac{r^2R \hat R}{\rho}
  \frac{\partial u}{\partial \omega} \Big) -\frac{\partial}{\partial r}\Big( r\rho\Big(\frac{\partial_{\theta}R}{r R}-\frac{\tau \|\bs r_c'\| rR \hat R}{\rho^2}\Big)
  \frac{\partial u}{\partial \theta} \Big) \\
  &\quad 
  -\frac{\partial}{\partial \theta}\Big( r\rho\Big(\frac{\partial_{\theta}R}{r R}-\frac{r\tau \|\bs r_c'\| R \hat R}{\rho^2}\Big)
  \frac{\partial u}{\partial r} \Big) 
  - r\tau \|\bs r_c'\| \frac{\partial}{\partial\theta}\Big( \frac{R^2}{\rho}
  \frac{\partial u}{\partial\omega} \Big)
  - r\tau \|\bs r_c'\| \frac{\partial}{\partial\omega}\Big( \frac{R^2}{\rho}
  \frac{\partial u}{\partial\theta} \Big) \Big\}.
\end{align*}
Particularly, we can obtain the Laplace--Beltrami operator $\widetilde \Delta_0$ of $u$ at $r=1$, i.e.,
\begin{equation}\label{eq:lap-bel}
\begin{split}
    \widetilde \Delta_{0} u
    &= \frac{1}{R^2\rho_0} \Big\{\frac{\partial}{\partial\theta}\Big( \Big(\rho_0+\frac{\tau^2\|\bs r_c'\|^{2}R^2}{\rho_0}\Big)
    \frac{\partial u}{\partial\theta} \Big)+\frac{\partial}{\partial\omega}\Big( \frac{R^2}{\rho_0}
    \frac{\partial u}{\partial\omega} \Big)\\
    &\qquad\qquad\;\; -\tau \|\bs r_c'\| \frac{\partial}{\partial\theta}\Big( \frac{R^2}{\rho_0}
    \frac{\partial u}{\partial\omega} \Big)
    -\tau \|\bs r_c'\| \frac{\partial}{\partial\omega}\Big( \frac{R^2}{\rho_0}
    \frac{\partial u}{\partial\theta} \Big) \Big\},
\end{split}
\end{equation}
where $\rho_0 := \rho(1, \theta, \omega)= \|\bs r_c'\|(1-\kappa R\cos\theta).$
Furthermore, if $R \equiv R_0$ is a constant and $\tau=0$ (that is, the pipes with circle cross-section and the centerline $C$ in $xy$-plane), 
equation \eqref{eq:lap-bel} reduces to
\begin{equation}\label{eq:lap-bel-R}
\begin{split}
    \widetilde \Delta_{0} u
    &= \frac{1}{R_0^2\rho_0} \Big\{\frac{\partial}{\partial\theta}\Big( \rho_0
    \frac{\partial u}{\partial\theta} \Big) + R_0^2 \frac{\partial}{\partial\omega}\Big( \frac{1}{\rho_0}
    \frac{\partial u}{\partial\omega} \Big) \Big\},
\end{split}
\end{equation}
where in this case $\rho_0 := \rho(1, \theta, \omega)= \|\bs r_c'\|(1-\kappa R_0\cos\theta).$
{ A typical geometry is the so-called torus pipe, with considerable references being available for the numerical schemes (cf. \cite{hao2024numerical,zakharov1973numerical,charlton1986numerical,aiba2007numerical,xiao2019numerical,harafuji1989computational} and references therein). }

It can be seen from \eqref{eq:lap-bel} that different from the polar and cylindrical coordinates,  there involve mixed derivatives with variable coefficients for the new curvilinear coordinates.
In the following sections, we shall consider two special coordinate systems, that is, the toroidal-poloidal coordinate system and the helical coordinate system.
The torsion is zeros in the toroidal and poloidal coordinate system, then one can derive the Laplace--Beltrami operator formula as \eqref{eq:lap-bel-R}.
While this becomes more complex in the helical situation like \eqref{eq:lap-bel}. We shall comprehensively study their properties and the corresponding compact difference operators in our following sections. % \ref{sec:torus}.

\section{Construction of compact difference operators}\label{sec:fdm}

In this section, we shall present some preparations for the compact difference methods.
More precisely, we design different compact difference operators for differential operators with and without mixed derivatives, respectively. After that, we shall also analyze the approximation order of the proposed compact difference operators,
which are useful to study the numerical scheme for Poisson-type equations on two special coordinate systems in the following section. 

\subsection{Difference operators}
Firstly, we give some difference operators. 
Let $\Omega := (0, 2\pi) \times I_\omega$, where $I_\omega:=(\omega_l, \omega_r),$ and $|\Omega| = 2\pi (\omega_r-\omega_l)$ be the area of the domain $\Omega.$
Let $M$ and $N$ be two positive integers. Define the mesh sizes $h_\theta = 2\pi/M$ and $h_\omega = (\omega_r-\omega_l)/N$, and introduce a mesh with $\theta_i = i h_\theta$ and $\omega_j = \omega_l + j h_\omega$ for $0 \leq i \leq M$, $0 \leq j \leq N$ on $\bar\Omega$. The discrete domain is $\Omega = \{ (\theta_i, \omega_j) \mid 0 \leq i \leq M, 0 \leq j \leq N \}$. 
Let $h = \max\{ h_\theta, h_\omega \}$ and $\tilde{h} = \min\{ h_\theta, h_\omega \}$, and assume that $h \leq \tilde{C} \tilde{h}$, where $\tilde{C}$ is a positive constant independent of $h$. For convenience, we denote by $C$ a generic positive constant independent of $h_\theta$ and $h_\omega$.
Denote $\partial_{\theta}^k = \frac{\partial^k}{\partial\theta^k}$ and $\partial_{\omega}^k = \frac{\partial^k}{\partial\omega^k}$, where $k = 1, 2, 3$. Define the index sets:    
\begin{align*}
\mathfrak{J}_h &= \{(i,j) \mid i = 0, \ldots, M;\; j = 0, \ldots, N \}, \\
\mathfrak{J}_h^{in} &= \{(i,j) \mid i = 0, \ldots, M-1;\; j = 1, \ldots, N-1 \},
\end{align*}
and denote
\begin{align*}
\mathcal{W}_{h} = \{u_{ij} \mid u_{i+M,j} = u_{ij},\; u_{i0} = u_{iN} = 0,\; \forall (i,j) \in \mathfrak{J}_h\}.
\end{align*}
Based on the above notations, we denote the first-order difference operator by
\begin{equation}\label{gridfun}
\begin{split}
\delta_{\theta} u_{i + \frac{1}{2}, j} &= \frac{u_{i+1,j} - u_{ij}}{h_{\theta}}, \qquad\qquad\quad\;\; \delta_{\omega} u_{i, j + \frac{1}{2}} = \frac{u_{i, j+1} - u_{ij}}{h_\omega},  \\
\nabla_{\!\theta} u_{ij} &= \dfrac{1}{2} (\delta_{\theta} u_{i + \frac{1}{2}, j} + \delta_{\theta} u_{i - \frac{1}{2}, j}), \qquad \nabla_{\!\omega} u_{ij} = \dfrac{1}{2} (\delta_\omega u_{i, j + \frac{1}{2}} + \delta_\omega u_{i, j - \frac{1}{2}}),
\end{split}
\end{equation}
for any grid function $u \in \mathcal{W}_{h}$.
The discrete gradient operator is given by
\begin{align}\label{Disgrad}
\nabla_{h} u_{ij} = \big( \delta_{\theta} u_{i + \frac{1}{2}, j}, \delta_{\omega} u_{i, j + \frac{1}{2}} \big)^\top.
\end{align}
In addition, the second-order difference operator is defined by
\begin{align}\begin{split}\label{Deltafun}
\delta^2_{\theta} u_{ij} & := \frac{\delta_{\theta} u_{i + \frac{1}{2}, j} - \delta_{\theta} u_{i - \frac{1}{2}, j}}{h_\theta} = \frac{u_{i+1, j} - 2u_{ij} + u_{i-1, j}}{h_{\theta}^{2}}, \\
\delta^2_{\omega} u_{ij} & := \frac{\delta_{\omega} u_{i, j + \frac{1}{2}} - \delta_{\omega} u_{i, j - \frac{1}{2}}}{h_\omega} = \frac{u_{i, j + 1} - 2u_{ij} + u_{i, j - 1}}{h_\omega^{2}}, \\
\delta^2_{\omega\theta} u_{ij} & := \nabla_{\!\omega} \nabla_{\!\theta} u_{ij} = \frac{u_{i+1, j+1} - u_{i+1, j-1} - u_{i-1, j+1} + u_{i-1, j-1}}{4 h_\theta h_\omega}.
\end{split}\end{align}

Let $u = \{u_{ij}\}$ and $v = \{v_{ij}\}$ be elements of $\mathcal{W}_{h}$. We define the (weighted) inner products and the associated norms as follows:
\begin{align*}
  &(u,v)_{\varpi}:=h_\theta h_\omega \sum_{i=0}^{M-1}\sum_{j=1}^{N-1} \varpi_{ij} u_{ij} v_{ij},\quad
  \|u\|_{\varpi} := (u,u)_{\varpi}^{1/2}, \\
  &
  (u, v)_{\theta,\varpi}:=h_\theta h_\omega \sum_{i=0}^{M-1}\sum_{j=1}^{N-1} \varpi_{i+\frac{1}{2},j} u_{i+\frac{1}{2},j} v_{i+\frac{1}{2},j}, \quad 
  \|u\|_{\theta,\varpi} := (u,u)_{\theta, \varpi}^{1/2}, \\
  &(u,v)_{\omega,\varpi}:=h_\theta h_\omega \sum_{i=0}^{M-1}\sum_{j=0}^{N-1} \varpi_{i,j+\frac{1}{2}} u_{i,j+\frac{1}{2}} v_{i,j+\frac{1}{2}},\quad  
  \|u\|_{\omega,\varpi} := (u,u)_{\omega, \varpi}^{1/2},
\end{align*}
where the weight function $\varpi=\varpi(\theta, \omega) \ge 0$.
In general, we shall drop the subscript $\varpi$ if $\varpi \equiv 1.$
Moreover, we define the discrete $l^\infty$-norm as  
$\|u\|_{\infty}=\max_{(i,j)\in\mathfrak{J}_h}|u_{ij}|$.
Additionally, the (weighted) discrete $H^1$-norm is given by 
\begin{align*}
    \|u\|_{h,\varpi,1}^{2} 
    := \|u\|_{\varpi}^{2} + \|\delta_{\theta}u\|_{\theta,\varpi}^{2} + \|\delta_{\omega}u\|_{\omega,\varpi}^{2}.
\end{align*}

We now present several lemmas that will be used to support the forthcoming analysis.
\begin{lemma}[Discrete integration by part]\label{secondordlemma}
  Assume $u,v\in\mathcal{W}_{h}$, and grid function $\phi=\{\phi_{ij}|\phi_{ij}=\phi(\theta_i,\omega_j)>0\ \text{and}\ \phi_{i+M,j}=\phi_{ij}, 0\leq i\leq M, 0\leq j\leq N$\}. Then the following equalities hold:
  \begin{align}\label{secondord1}
    (\delta_{\theta}(\phi\delta_{\theta}u),v) &=
    -(\phi\delta_{\theta}u,\delta_{\theta}v)_\theta,\quad 
    (\delta_{\omega}(\phi\delta_{\omega}u),v)=
    -(\phi\delta_{\omega}u,\delta_{\omega}v)_\omega,\\
    \label{secondord3}
    (\nabla_{\!\theta}(\phi\nabla_{\!\omega} u),v)  &= -(\phi\nabla_{\!\omega} u,\nabla_{\!\theta}v),\quad
    (\nabla_{\!\omega}(\phi\nabla_{\!\theta} u),v)  = -(\phi\nabla_{\!\theta} u,\nabla_{\!\omega}v).
  \end{align}
\end{lemma}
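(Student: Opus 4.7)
All four identities are instances of discrete summation by parts, and the strategy is the same throughout: expand the outer difference operator, rearrange by shifting summation indices, and absorb the boundary contributions using either the $\theta$-periodicity of $u,v,\phi$ or the homogeneous conditions $u_{i0}=u_{iN}=v_{i0}=v_{iN}=0$. The plan is to handle the two $\delta$-identities \eqref{secondord1} first, since they set the template, and then transfer the argument to the centered $\nabla$-identities \eqref{secondord3}.

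For the first identity of \eqref{secondord1}, I would start by writing
\begin{equation*}
\big(\delta_\theta(\phi\,\delta_\theta u),v\big)
= h_\theta h_\omega \sum_{i=0}^{M-1}\sum_{j=1}^{N-1}
  \frac{\phi_{i+\frac12,j}\delta_\theta u_{i+\frac12,j}-\phi_{i-\frac12,j}\delta_\theta u_{i-\frac12,j}}{h_\theta}\,v_{ij},
\end{equation*}
split into two sums, and shift $i\mapsto i-1$ in the second one. The boundary values at $i=0$ and $i=M-1$ then cancel by the periodic identifications $\phi_{-\frac12,j}=\phi_{M-\frac12,j}$, $u_{-1,j}=u_{M-1,j}$, $v_{-1,j}=v_{M-1,j}$ built into $\mathcal W_h$, leaving exactly $-(\phi\,\delta_\theta u,\delta_\theta v)_\theta$. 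The second identity of \eqref{secondord1} is identical in structure but now I shift $j\mapsto j-1$; the only new point is that the extreme terms at $j=1$ and $j=N-1$ pick up $v_{i,0}$ and $v_{i,N}$, which vanish by $v\in\mathcal W_h$, so once again no boundary term survives and I obtain $-(\phi\,\delta_\omega u,\delta_\omega v)_\omega$.

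For \eqref{secondord3} the definitions \eqref{gridfun} place $\phi\nabla_\omega u$ at integer nodes, so applying $\nabla_\theta$ gives the centered two-step difference
\begin{equation*}
\nabla_\theta(\phi\nabla_\omega u)_{ij}
= \frac{\phi_{i+1,j}(\nabla_\omega u)_{i+1,j}-\phi_{i-1,j}(\nabla_\omega u)_{i-1,j}}{2h_\theta}.
\end{equation*}
Plugging into the inner product, splitting the sum, and performing the shifts $i\mapsto i\pm1$ (using $\theta$-periodicity to keep the range $0\le i\le M-1$) telescopes the $v$-factor into $v_{i+1,j}-v_{i-1,j}=2h_\theta\nabla_\theta v_{ij}$, yielding $-(\phi\nabla_\omega u,\nabla_\theta v)$. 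The fourth identity is obtained by the symmetric shift $j\mapsto j\pm1$; here I have to check that the boundary factors at $j\in\{0,1,N-1,N\}$ produced by the two-step stencil of $\nabla_\omega$ are annihilated by $v_{i0}=v_{iN}=0$, which is exactly what the space $\mathcal W_h$ supplies.

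The routine parts are the sum-rearrangements, so the only genuine point that deserves care is the bookkeeping in \eqref{secondord3}: the two-step centered stencil of $\nabla_\omega$ reaches one grid line beyond the Dirichlet boundary, so after the shift there are four boundary layers at $j=0,1,N-1,N$ to inspect rather than the two that appear in the $\delta_\omega$ case. Once I verify that each of these contributes a factor $v_{i,0}$ or $v_{i,N}$ (possibly multiplied by $u$-data that is allowed to be nonzero), the proof is complete; no smoothness or positivity of $\phi$ is used, so the positivity hypothesis on $\phi$ is not needed here and is only stated for consistency with later applications.
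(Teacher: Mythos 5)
Your proposal is correct and follows essentially the same route as the paper: expand the outer difference, shift summation indices, and let the $\theta$-periodicity and the homogeneous data in $\omega$ annihilate the boundary contributions (the paper writes out only the last identity of \eqref{secondord3} this way and declares the rest analogous). One small correction to your bookkeeping preview for the $\omega$-direction identities: of the four boundary layers $j=0,1,N-1,N$, only two produce a factor $v_{i,0}$ or $v_{i,N}$; the other two carry $v$ at the interior nodes $j=1$ or $j=N-1$ and vanish instead because $u_{i,0}=u_{i,N}=0$, so the hypothesis $u\in\mathcal W_h$ is genuinely needed and the terms are not all killed by $v$ alone.
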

\begin{proof}
In view of the boundary conditions of the  space $\mathcal{W}_{h}$ and definitions of the inner products, one can readily prove \eqref{secondord1}-\eqref{secondord3}.
For simplicity, we just briefly prove the second identity in \eqref{secondord3}.

We first rewrite the second identity in \eqref{secondord3} as
\begin{align}\label{secondproof4}
\begin{split}                
    (\nabla_{\!\omega}(\phi\nabla_{\!\theta} u),v)  
    &= h_\theta h_\omega\sum_{i=0}^{M-1}\sum_{j=1}^{N-1}\nabla_{\!\omega}(\phi_{ij}\nabla_{\!\theta} u_{ij})v_{ij} \\
    &=\frac{h_\omega}{2} \sum_{i=0}^{M-1}\sum_{j=1}^{N-1}\nabla_{\!\omega} (\phi_{ij}u_{i+1,j})v_{ij}-\frac{h_\omega}{2}\sum_{i=0}^{M-1}\sum_{j=1}^{N-1}\nabla_{\!\omega} (\phi_{ij}u_{i-1,j})v_{ij} \\
    &:=I_1+I_2.
\end{split}
\end{align} 
According to \eqref{gridfun}, we can represent $I_1$ as
\begin{align*}%\label{I1es}
\begin{split}
    I_1
    &= \frac{1}{4} \sum_{i=0}^{M-1}\sum_{j=1}^{N-1}\phi_{i,j+1}u_{i+1,j+1}v_{ij}-\frac{1}{4} \sum_{i=0}^{M-1}\sum_{j=1}^{N-1}\phi_{i,j-1}u_{i+1,j-1}v_{ij} \\  
    &= \frac{1}{4} \sum_{i=0}^{M-1}\sum_{j=1}^{N-1}\phi_{ij}u_{i+1,j}(v_{i,j-1}-v_{i,j+1}) 
    = -\frac{h_\omega}{2} \sum_{i=0}^{M-1}\sum_{j=1}^{N-1}\phi_{ij} u_{i+1,j}\nabla_{\!\omega}v_{ij}.   
\end{split}
\end{align*}
Similarly, we have
\begin{align*}
    I_2=\frac{h_\omega}{2}\sum_{i=0}^{M-1}\sum_{j=1}^{N-1}\phi_{ij}u_{i-1,j}\nabla_{\!\omega} v_{ij}.
\end{align*}
Substituting above two identities for $I_1$ and $I_2$ into \eqref{secondproof4} leads to
\begin{align*}             
    (\nabla_{\!\omega}(\phi\nabla_{\!\theta} u),v) 
    =-\frac{h_\omega}{2} \sum_{i=0}^{M-1}\sum_{j=1}^{N-1}\phi_{ij} (u_{i+1,j}-u_{i-1,j})\nabla_{\!\omega}v_{ij}
    = -(\phi\nabla_{\!\theta} u,\nabla_{\!\omega}v).
\end{align*} 
This ends our proof.
\end{proof}
\begin{lemma}[Inverse inequalities]\label{inverinequLem}
  Assume $u,v\in\mathcal{W}_{h}$, and bounded grid function $\phi=\{\phi_{ij}|\phi_{ij}=\phi(\theta_i,\omega_j)>0\ \text{and}\ \phi_{i+M,j}=\phi_{ij}, 0\leq i\leq M, 0\leq j\leq N\}$. Then the following inequalities hold:
  \begin{align}
    \|\delta_\theta u\|_{\theta} &\leq C h_\theta^{-1} \| u \|, \qquad\; 
    \|\delta_\omega u\|_{\omega} \leq C h_\omega^{-1} \| u \|, \label{inverinequ1} \\
    \|\delta_\theta (\phi \delta_\theta u) \| &\leq C h_\theta^{-1} \| \delta_\theta u \|_{\theta},\quad 
    \| \delta_\omega (\phi \delta_\omega u) \| \leq C h_\omega^{-1} \| \delta_\omega u \|_{\omega}, \label{inverinequ2} \\
    \| \nabla_{\!\omega} (\phi \nabla_{\!\theta} u) \| &\leq C h_\omega^{-1} \| \delta_\theta u \|_{\theta}, \quad
    \| \nabla_{\!\theta} (\phi \nabla_{\!\omega} u) \| \leq C h_\theta^{-1} \| \delta_\omega u \|_{\omega}. \label{inverinequ3}
  \end{align}
\end{lemma}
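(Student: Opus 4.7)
The plan is to establish all three pairs of bounds by the same elementary template: expand the difference operator on the left-hand side into its defining combination of grid values (or one-sided differences), pull out the factor $h^{-1}$ explicitly, apply the bound $(a+b)^2\le 2(a^2+b^2)$ together with the boundedness $|\phi_{ij}|\le \|\phi\|_\infty$, and finally reindex the resulting sums using the $\theta$-periodicity and the boundary conditions $u_{i0}=u_{iN}=0$ from $\mathcal{W}_h$. By the symmetric roles of $\theta$ and $\omega$, it suffices to handle one inequality from each pair; I will do the $\theta$-version and sketch the $\omega$-analogue only when some asymmetry arises (the interior/boundary indexing differs between the two variables).

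For the first pair \eqref{inverinequ1}, I would write
\[
\|\delta_\theta u\|_\theta^2
= h_\theta h_\omega \sum_{i=0}^{M-1}\sum_{j=1}^{N-1} \Bigl(\tfrac{u_{i+1,j}-u_{ij}}{h_\theta}\Bigr)^{\!2}
\le \tfrac{2}{h_\theta^{2}}\, h_\theta h_\omega \sum_{i=0}^{M-1}\sum_{j=1}^{N-1}\bigl(u_{i+1,j}^{2}+u_{ij}^{2}\bigr),
\]
and then use periodicity $u_{i+M,j}=u_{ij}$ to shift the index in the first summand, giving $\|\delta_\theta u\|_\theta\le 2 h_\theta^{-1}\|u\|$. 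The $\omega$-analogue is identical but relies on the zero boundary values at $j=0,N$ to extend the summation range.

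For the second pair \eqref{inverinequ2}, I would expand
\[
\delta_\theta(\phi\delta_\theta u)_{ij}
=\tfrac{1}{h_\theta}\bigl[(\phi\delta_\theta u)_{i+\frac12,j}-(\phi\delta_\theta u)_{i-\frac12,j}\bigr],
\]
apply $(a-b)^2\le 2(a^2+b^2)$ and the pointwise bound on $\phi$, square and sum, and then reindex the $\theta$-sum by periodicity to absorb both shifted contributions into a single $\|\delta_\theta u\|_\theta^2$, producing the factor $C h_\theta^{-2}$. For the mixed pair \eqref{inverinequ3}, the only new ingredient is that $\nabla_\theta u$ lives at grid points $(i,j)$ rather than at half-integer nodes, so I would first use
\[
|\nabla_\theta u_{ij}|^2
\le \tfrac12\bigl(|\delta_\theta u_{i+\frac12,j}|^2+|\delta_\theta u_{i-\frac12,j}|^2\bigr),
\]
and then estimate $\nabla_\omega(\phi\nabla_\theta u)_{ij}=\tfrac{1}{2h_\omega}(\phi_{i,j+1}\nabla_\theta u_{i,j+1}-\phi_{i,j-1}\nabla_\theta u_{i,j-1})$ in the same fashion. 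When the $\omega$-index sum is reindexed to $j\in\{0,\ldots,N\}$, the boundary terms vanish because $u_{i,0}=u_{i,N}=0$ implies $\nabla_\theta u_{i,0}=\nabla_\theta u_{i,N}=0$, so the sum is controlled by $\|\delta_\theta u\|_\theta^2$ up to a constant multiple.

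There is no genuine mathematical obstacle here; the proof is mechanical once the right grouping and reindexing are chosen. The only place that needs care is the interplay between the interior-in-$\omega$ summation ($j=1,\ldots,N-1$) in the unweighted norm $\|\cdot\|$ and the half-integer $\omega$-summation ($j=0,\ldots,N-1$) in $\|\cdot\|_\omega$: when the difference operator in $\omega$ is applied, the reindexed sum momentarily uses $j=0$ and $j=N$, and one must invoke the Dirichlet boundary values in $\mathcal{W}_h$ to close the estimate. Once this bookkeeping is settled, all six inequalities follow in a few lines each.
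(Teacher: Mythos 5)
Your proposal is correct and follows essentially the same route as the paper: expand the difference quotient, apply $(a\pm b)^2\le 2(a^2+b^2)$ together with the boundedness of $\phi$, and reindex using the $\theta$-periodicity and the Dirichlet values at $j=0,N$. The paper proves only \eqref{inverinequ1} explicitly and dismisses \eqref{inverinequ2}--\eqref{inverinequ3} with ``similarly''; your write-up merely makes that bookkeeping (in particular the half-integer versus integer node indexing and the vanishing of $\nabla_{\!\theta}u_{i,0}$, $\nabla_{\!\theta}u_{i,N}$) explicit.
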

\begin{proof}
The derivation of \eqref{inverinequ1} is direct. Indeed, from the triangle inequality, we have
  \begin{align*}
      \|\delta_\theta u\|_{\theta}^2 
      &= h_\theta h_\omega \sum_{i=0}^{M-1}\sum_{j=1}^{N-1} (\delta_{\theta} u_{i+\frac{1}{2},j})^2 
      \leq 2h_\theta^{-1} h_\omega \sum_{i=0}^{M-1}\sum_{j=1}^{N-1} (u_{i+1,j}^2 + u_{ij}^2) 
      \leq C h_\theta^{-2} \| u \|^2,
  \end{align*}
  and
  \begin{align*}
      \|\delta_\omega u\|_{\omega}^2 
      &= h_\theta h_\omega \sum_{i=0}^{M-1}\sum_{j=1}^{N-1} (\delta_{\omega} u_{i+\frac{1}{2},j})^2 
      \leq 2h_\omega^{-1} h_\theta \sum_{i=0}^{M-1}\sum_{j=1}^{N-1} (u_{i,j+1}^2 + u_{ij}^2) 
      \leq C h_\omega^{-2} \| u \|^2,
  \end{align*}
which gives \eqref{inverinequ1}. Similarly, thanks to the boundedness of $ \phi $, we can further deduce \eqref{inverinequ2} and \eqref{inverinequ3}. This completes the proof.
\end{proof}
  
In the following, we introduce four compact notations.
\subsection{Compact difference operators}
Assume $p(\theta,\omega), q(\theta, \omega) \in C^4(\bar \Omega)$ are given positive functions. 
Denote
\begin{align*}
    v=\frac{\partial}{\partial\theta}\Big(p\frac{\partial u}{\partial\theta}\Big), \qquad
    w=\frac{\partial}{\partial\omega}\Big(q\frac{\partial u}{\partial\omega}\Big).
\end{align*}
Let $I$ denote the identity operator, then we define the following compact difference operators
\begin{align*}
\mathscr{A}_\theta v_{ij} = \Big(I + \frac{h_\theta^2}{12} \delta_\theta^2\Big) v_{ij} - \frac{h_\theta^2}{12} \nabla_{\!\theta} \Big(\frac{\partial_\theta p}{p} v\Big)_{ij}, \quad 
\mathscr{B}_\omega w_{ij} = \Big(I + \frac{h_\omega^2}{12} \delta_\omega^2\Big) w_{ij} - \frac{h_\omega^2}{12} \nabla_{\!\omega} \Big(\frac{\partial_\omega q}{q} w\Big)_{ij}.
\end{align*}
Then we have the follow results.
\begin{lemma}[cf. \cite{SunZZ2001}]\label{Torucomppro1}
Assume $u\in C^{5}(\bar{\Omega})$ and $p(\theta,\omega), q(\theta, \omega) \in C^4(\bar \Omega)$ are given positive functions. Denote
\begin{align*}
    \tilde{p}=\frac{(\partial_\theta p)^2}{p}-\frac{1}{2}\partial_\theta^2 p, \quad
    \hat{p}=p-\frac{h_\theta^2}{12}\tilde{p}, \quad
    \tilde{q}=\frac{(\partial_\omega q)^2}{q}-\frac{1}{2}\partial_\omega^2q, \quad
    \hat{q}=q-\frac{h_\omega^2}{12}\tilde{q}.
\end{align*}
Then we have
\begin{align*}
    \mathscr{A}_\theta v_{ij}=\delta_\theta (\hat{p} \delta_\theta u)_{ij} + \mathcal{O}(h^4), \quad
    \mathscr{B}_\omega w_{ij}=\delta_\omega (\hat{q} \delta_\omega u)_{ij} + \mathcal{O}(h^4).
\end{align*}
\end{lemma}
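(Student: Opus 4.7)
The plan is to establish the two identities directly by Taylor expansion at the base grid point $(\theta_i,\omega_j)$ out to order $h^4$. Since both statements are structurally identical up to swapping $(\theta,p,v)$ with $(\omega,q,w)$, it suffices to prove the first; the second follows by relabeling.

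First I would expand $\delta_\theta(\hat p\delta_\theta u)_{ij}$. Using $\delta_\theta u_{i+\frac12,j}=u_\theta(\theta_{i+\frac12},\omega_j)+\frac{h_\theta^2}{24}u_{\theta\theta\theta}(\theta_{i+\frac12},\omega_j)+O(h^4)$ and multiplying by $p_{i+\frac12,j}$, a centered difference in $\theta$ yields
\begin{equation*}
\delta_\theta(p\delta_\theta u)_{ij} = v_{ij} + \frac{h_\theta^2}{24}\bigl[(pu_{\theta\theta\theta})_\theta + (pu_\theta)_{\theta\theta\theta}\bigr]_{ij} + O(h^4).
\end{equation*}
Expanding the bracket by Leibniz rewrites the $O(h_\theta^2)$ remainder as $\tfrac{h_\theta^2}{12}[pu_{\theta\theta\theta\theta}+2p_\theta u_{\theta\theta\theta}+\tfrac{3}{2}p_{\theta\theta}u_{\theta\theta}+\tfrac{1}{2}p_{\theta\theta\theta}u_\theta]$. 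Since $\hat p = p-\tfrac{h_\theta^2}{12}\tilde p$ and $\delta_\theta(\tilde p\delta_\theta u)=(\tilde p u_\theta)_\theta+O(h^2)$ is then pre--multiplied by $h_\theta^2$, I obtain
\begin{equation*}
\delta_\theta(\hat p\delta_\theta u)_{ij}=\delta_\theta(p\delta_\theta u)_{ij}-\tfrac{h_\theta^2}{12}(\tilde p u_\theta)_\theta\big|_{ij}+O(h^4).
\end{equation*}

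In parallel I would expand $\mathscr{A}_\theta v_{ij}$. With $\delta_\theta^2 v=v_{\theta\theta}+O(h^2)$ and $v=pu_{\theta\theta}+p_\theta u_\theta$, I compute $v_{\theta\theta}=pu_{\theta\theta\theta\theta}+3p_\theta u_{\theta\theta\theta}+3p_{\theta\theta}u_{\theta\theta}+p_{\theta\theta\theta}u_\theta$, and with $\nabla_\theta(\tfrac{\partial_\theta p}{p}v)=\partial_\theta(\tfrac{p_\theta}{p}v)+O(h^2)$ together with $(p_\theta/p)_\theta = p_{\theta\theta}/p-(p_\theta/p)^2$, the second correction becomes $p_\theta u_{\theta\theta\theta}+\bigl(p_{\theta\theta}+(p_\theta)^2/p\bigr)u_{\theta\theta}+\bigl(2p_\theta p_{\theta\theta}/p-(p_\theta)^3/p^2\bigr)u_\theta$. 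Substituting the definition $\tilde p=(p_\theta)^2/p-\tfrac12 p_{\theta\theta}$ on the $\hat p$ side, I check term by term that the coefficients of $u_\theta,u_{\theta\theta},u_{\theta\theta\theta},u_{\theta\theta\theta\theta}$ on the two sides agree (the $u_{\theta\theta\theta\theta}$ coefficients match trivially because the $\nabla_\theta$ term contributes no fourth derivative), which completes the first identity; the second follows by the same argument with $(\omega,q,w)$ in place of $(\theta,p,v)$.

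The main obstacle is purely algebraic bookkeeping: roughly a dozen derivative products must be tracked through several expansions, and a stray factor of $\tfrac12$ or a sign error would break the matching. The step that keeps the calculation tractable is the a~priori reduction of $\delta_\theta(\hat p\delta_\theta u)$ displayed above, which recasts the task as matching the $O(h_\theta^2)$ truncation error of the ordinary variable--coefficient second--difference operator against the two correction terms defining $\mathscr{A}_\theta$; the specific form $\tilde p = (p_\theta)^2/p - \tfrac12 p_{\theta\theta}$ is exactly the Numerov--type correction that makes the three remaining scalar coefficients cancel.
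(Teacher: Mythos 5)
Your proof is correct: I checked the coefficient matching, and both sides reduce to $v_{ij}+\tfrac{h_\theta^2}{12}\bigl[pu_{\theta\theta\theta\theta}+2p_\theta u_{\theta\theta\theta}+(2p_{\theta\theta}-(p_\theta)^2/p)u_{\theta\theta}+(p_{\theta\theta\theta}-2p_\theta p_{\theta\theta}/p+(p_\theta)^3/p^2)u_\theta\bigr]+\mathcal{O}(h^4)$, so the identity holds. The paper itself gives no proof of this lemma (it only cites \cite{SunZZ2001}), but your direct Taylor-expansion verification is exactly the technique the paper uses in Appendix~\ref{app:lmm1} for the analogous mixed-derivative operators, so your argument is the natural in-house proof and fills the gap cleanly.
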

It is evident that the stencil of the operator $\mathscr{A}_\theta u_{ij}$ is $\{(i-1,j), (i,j), (i+1,j)\}$, and these for $\mathscr{B}_\omega u_{ij}$ is $\{(i,j-1), (i,j), (i,j+1)\}.$

\begin{lemma}\label{Torucomppro2}
  Assume the conditions in Lemma \ref{Torucomppro1} holds. Then we have
  \begin{align}\label{ABexchange}
      \mathscr{A}_\theta \mathscr{B}_\omega v_{ij}=\mathscr{B}_\omega \mathscr{A}_\theta v_{ij}+\mathcal{O}(h^4).
  \end{align}
\end{lemma}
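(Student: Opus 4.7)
The plan is to exploit the algebraic structure of the two compact operators. Writing $\mathscr{A}_\theta = I + \bar{\mathscr{A}}_\theta$ and $\mathscr{B}_\omega = I + \bar{\mathscr{B}}_\omega$, with
$$\bar{\mathscr{A}}_\theta = \frac{h_\theta^2}{12}\Bigl(\delta_\theta^2 - \nabla_{\!\theta}(P\,\cdot)\Bigr), \qquad \bar{\mathscr{B}}_\omega = \frac{h_\omega^2}{12}\Bigl(\delta_\omega^2 - \nabla_{\!\omega}(Q\,\cdot)\Bigr),$$
where $P = \partial_\theta p / p$ and $Q = \partial_\omega q / q$, the identity pieces telescope in the commutator, so
$$\mathscr{A}_\theta\mathscr{B}_\omega v_{ij} - \mathscr{B}_\omega\mathscr{A}_\theta v_{ij} = \bar{\mathscr{A}}_\theta \bar{\mathscr{B}}_\omega v_{ij} - \bar{\mathscr{B}}_\omega \bar{\mathscr{A}}_\theta v_{ij}.$$
The crucial observation is that both $\bar{\mathscr{A}}_\theta$ and $\bar{\mathscr{B}}_\omega$ carry an explicit prefactor $h_\theta^2$ and $h_\omega^2$, respectively, so every term produced by their composition already sits at order $h_\theta^2 h_\omega^2 = O(h^4)$.

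Second, I would expand the right-hand side into its four $\times$ four $= $ eight summands. The matched pair $\delta_\theta^2 \delta_\omega^2 v - \delta_\omega^2 \delta_\theta^2 v$ vanishes identically, because second differences along the two orthogonal grid directions commute pointwise (one checks directly that both expressions reduce to the same nine-point stencil). The other six residual terms have the generic form $\tfrac{h_\theta^2 h_\omega^2}{144}\,\mathcal{D}_\theta \mathcal{D}_\omega v$, with $\mathcal{D}_\theta\in\{\delta_\theta^2,\nabla_{\!\theta}(P\,\cdot)\}$ and $\mathcal{D}_\omega\in\{\delta_\omega^2,\nabla_{\!\omega}(Q\,\cdot)\}$, possibly composed in either order.

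To conclude, I would bound each residual by standard Taylor estimates. For any $g \in C^2(\bar\Omega)$, the quantities $\delta_\theta^2 g_{ij}$ and $\nabla_{\!\theta}g_{ij}$ are uniformly bounded by $\|\partial_\theta^2 g\|_\infty$ and $\|\partial_\theta g\|_\infty$ up to $O(h_\theta^2)$, and analogously in $\omega$. Combining this with the smoothness of $P,Q$ (inherited from $p,q \in C^4(\bar\Omega)$ and $p,q > 0$) and of $v$ (inherited from the hypotheses of Lemma \ref{Torucomppro1}), each of the six residual terms is bounded by a constant times $h_\theta^2 h_\omega^2$, which gives the claimed pointwise estimate \eqref{ABexchange}.

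The only subtle point, and the one I would double-check most carefully, is the regularity bookkeeping: the cross-compositions $\delta_\theta^2 \nabla_{\!\omega}(Qv)$ and $\nabla_{\!\theta}(P\nabla_{\!\omega}(Qv))$ require enough mixed derivatives of $v$, $P$ and $Q$ for the Taylor remainders to remain bounded uniformly in $h$. Once this is ensured, no cancellations beyond $\delta_\theta^2\delta_\omega^2 = \delta_\omega^2\delta_\theta^2$ are needed; the factor $h_\theta^2 h_\omega^2$ in every surviving term delivers the $O(h^4)$ conclusion automatically, so this lemma is, at its core, a structural statement about the prefactors rather than a delicate cancellation argument.
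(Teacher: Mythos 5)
Your proposal is correct and follows essentially the same route as the paper: the paper likewise expands $\mathscr{A}_\theta\mathscr{B}_\omega$ and $\mathscr{B}_\omega\mathscr{A}_\theta$ explicitly, cancels the commuting parts (in particular $\delta_\theta^2\delta_\omega^2=\delta_\omega^2\delta_\theta^2$), and observes that every surviving term carries the prefactor $h_\theta^2 h_\omega^2/144$, hence is $\mathcal{O}(h^4)$. Your commutator packaging $[\mathscr{A}_\theta,\mathscr{B}_\omega]=[\bar{\mathscr{A}}_\theta,\bar{\mathscr{B}}_\omega]$ and the explicit boundedness check on the six residual terms are a slightly cleaner presentation of the same argument, not a different one.
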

\begin{proof}
With a direct calculation, we have
\begin{align*}
\mathscr{A}_\theta \mathscr{B}_\omega v_{ij}
&=\Big(I + \frac{h_\theta^2}{12}\delta_{\theta}^2\Big)\Big(I + \frac{h_\omega^2}{12}\delta_{\omega}^2\Big)v_{ij}-\frac{h_\omega^2}{12}\Big(I + \frac{h_\theta^2}{12}\delta_{\theta}^2\Big)\nabla_{\!\omega}\Big(\frac{\partial_{\omega} q}{q}v\Big)_{ij} \\
&\quad -\frac{h_\theta^2}{12}\nabla_{\!\theta}\Big(\frac{\partial_{\theta} p}{p}\Big(I + \frac{h_\omega^2}{12}\delta_{\omega}^2\Big)v\Big)_{ij}+\frac{h_\theta^2 h_\omega^2}{144}\nabla_{\!\theta}\Big(\frac{\partial_{\theta} p}{p}\nabla_{\!\omega}\Big(\frac{\partial_{\omega} q}{q}v\Big)\Big)_{ij},\\
\mathscr{B}_\omega \mathscr{A}_\theta v_{ij}
&=\Big(I + \frac{h_\omega^2}{12}\delta_{\omega}^2\Big)\Big(I + \frac{h_\theta^2}{12}\delta_{\theta}^2\Big)v_{ij}-\frac{h_\theta^2}{12}\Big(I + \frac{h_\omega^2}{12}\delta_{\omega}^2\Big)\nabla_{\!\theta}\Big(\frac{\partial_{\theta} p}{p}v\Big)_{ij}\\
&\quad -\frac{h_\omega^2}{12}\nabla_{\!\omega}\Big(\frac{\partial_{\omega} q}{q}\Big(I + \frac{h_\theta^2}{12}\delta_{\theta}^2\Big)v\Big)_{ij}+\frac{h_\theta^2 h_\omega^2}{144}\nabla_{\!\omega}\Big(\frac{\partial_{\omega} q}{q}\nabla_{\!\theta}\Big(\frac{\partial_{\theta} p}{p}v\Big)\Big)_{ij}.
\end{align*}
This gives us
\begin{align*}
\mathscr{A}_\theta \mathscr{B}_\omega v_{ij}-\mathscr{B}_\omega \mathscr{A}_\theta v_{ij}
&=-\frac{h_\theta^2 h_\omega^2}{144}\Big(\delta_{\theta}^2\nabla_{\!\omega}\Big(\frac{\partial_{\omega} q}{q}v\Big)_{ij}-\nabla_{\!\omega}\Big(\frac{\partial_{\omega} q}{q}\delta_{\theta}^2v\Big)_{ij}+\nabla_{\!\theta}\Big(\frac{\partial_{\theta} p}{p}\delta_{\omega}^2v\Big)_{ij}\\
&\quad -\delta_{\omega}^2\nabla_{\!\theta}\Big(\frac{\partial_{\theta} p}{p}v\Big)_{ij}+\nabla_{\!\omega}\Big(\frac{\partial_{\omega} q}{q}\nabla_{\!\theta}\Big(\frac{\partial_{\theta} p}{p}v\Big)\Big)_{ij}-\nabla_{\!\theta}\Big(\frac{\partial_{\theta} p}{p}\nabla_{\!\omega}\Big(\frac{\partial_{\omega} q}{q}v\Big)\Big)_{ij}\Big),
\end{align*}
which leads to \eqref{ABexchange}.
\end{proof}

According to \eqref{eq:lap-bel}, we see that for general centerline $C$ with $\tau \neq 0$, the Laplace--Beltrami operator involves mixed derivative terms like
\begin{align*}
  \tilde{v}:=\frac{\partial}{\partial\theta}\Big(q \frac{\partial u}{\partial\omega}\Big),\qquad
  \tilde{w}:=\frac{\partial}{\partial\omega}\Big(q \frac{\partial u}{\partial\theta}\Big).
  \end{align*}
To treat these terms numerically, 
we introduce two novel compact difference operators: 
\begin{align*}
\mathscr{C}_{\theta\omega}\tilde{v}_{ij}
&=\Big(I+\frac{h_\theta^2}{6}\delta^2_\theta+\frac{h_\omega^2}{6}\delta^2_\omega\Big)\tilde{v}_{ij}-\frac{h_\theta^2}{3}\frac{\partial_{\theta} q}{q}\nabla_{\!\theta}\tilde{v}_{ij}, \\ 
\mathscr{D}_{\omega\theta}\tilde{w}_{ij}
&=\Big(I+\frac{h_\theta^2}{6}\delta^2_\theta+\frac{h_\omega^2}{6}\delta^2_\omega\Big)\tilde{w}_{ij}-\frac{h_\omega^2}{3}\frac{\partial_{\omega} q}{q}\nabla_{\!\omega}\tilde{w}_{ij}.
\end{align*}
\begin{lemma}\label{comppro1}
Assume $u\in C^{6}(\bar{\Omega})$ and  $p(\theta,\omega), q(\theta, \omega) \in C^4(\bar \Omega)$ are given positive functions.  Denote
\begin{align*}
    \bar q =\partial_{\omega}\Big(\frac{\partial_{\theta} q}{q}\Big) = \partial_{\theta}\Big(\frac{\partial_{\omega} q}{q}\Big),\quad%\tilde{q}=\frac{(\partial_{\omega} q)^2}{q}-\frac{1}{2}\partial_{\omega}^2 q,\\
    \tilde{q}_1 =\frac{(\partial_{\theta} q)^2}{q}-\frac{1}{2}\partial_{\theta}^2 q,\quad
    \hat{q}_1=q-\frac{h_\theta^2}{3}\tilde{q}_1,\quad\hat{q}_2=q-\frac{h_\omega^2}{3}\tilde{q}.
\end{align*}
Then there exist
\begin{align}
   \mathscr{C}_{\theta\omega}\tilde{v}_{ij}&=\nabla_{\!\omega}(\hat{q}_1\nabla_{\!\theta} u)_{ij}+\frac{h_\theta^2}{3}\bar q\delta_\theta(q\delta_\theta u)_{ij}+\mathcal{O}(h^4),  \label{Ccompact} \\
    \mathscr{D}_{\omega\theta}\tilde{w}_{ij}&=\nabla_{\!\theta}(\hat{q}_2\nabla_{\!\omega} u)_{ij}+\frac{h_\omega^2}{3}\bar q\delta_\omega(q\delta_\omega u)_{ij}+\mathcal{O}(h^4). \label{Dcompact}
\end{align}
\end{lemma}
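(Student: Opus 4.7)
The plan is to prove both identities \eqref{Ccompact} and \eqref{Dcompact} by direct Taylor expansion at the grid point $(\theta_i,\omega_j)$, following the same template as the classical compact-scheme calculation recorded in Lemma~\ref{Torucomppro1}. The two statements are related by the symmetry $\theta\leftrightarrow\omega$, $\hat q_1\leftrightarrow\hat q_2$, $\mathscr{C}_{\theta\omega}\leftrightarrow\mathscr{D}_{\omega\theta}$, so I would work out \eqref{Ccompact} in full and then obtain \eqref{Dcompact} by relabeling.

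First I would insert the standard centered-difference expansions
\begin{align*}
\delta_\theta^2 f_{ij}&=\partial_\theta^2 f+\tfrac{h_\theta^2}{12}\partial_\theta^4 f+O(h_\theta^4),\\
\nabla_\theta f_{ij}&=\partial_\theta f+\tfrac{h_\theta^2}{6}\partial_\theta^3 f+O(h_\theta^4),
\end{align*}
together with their $\omega$-analogues, into the definition of $\mathscr{C}_{\theta\omega}$. Applied to the smooth function $\tilde v$, this yields
\[
\mathscr{C}_{\theta\omega}\tilde v_{ij}=\tilde v+\tfrac{h_\theta^2}{6}\partial_\theta^2\tilde v+\tfrac{h_\omega^2}{6}\partial_\omega^2\tilde v-\tfrac{h_\theta^2}{3}\tfrac{\partial_\theta q}{q}\,\partial_\theta\tilde v+O(h^4),
\]
and substituting $\tilde v=q\,u_{\theta\omega}+(\partial_\theta q)u_\omega$ and using the Leibniz rule to expand $\partial_\theta\tilde v$, $\partial_\theta^2\tilde v$, $\partial_\omega^2\tilde v$ rewrites the left-hand side as a polynomial in the partial derivatives of $u$ with coefficients built from $q$ and its derivatives.

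For the right-hand side, the same centered-difference expansions — applied first to $\nabla_\theta u$ and then to the outer $\nabla_\omega$ — together with the substitution $\hat q_1=q-\tfrac{h_\theta^2}{3}\tilde q_1$, produce
\[
\nabla_\omega(\hat q_1\nabla_\theta u)_{ij}=\partial_\omega(q\,u_\theta)+\tfrac{h_\omega^2}{6}\partial_\omega^3(q\,u_\theta)+\tfrac{h_\theta^2}{6}\partial_\omega(q\,u_{\theta\theta\theta})-\tfrac{h_\theta^2}{3}\partial_\omega(\tilde q_1 u_\theta)+O(h^4).
\]
The corrector $\tfrac{h_\theta^2}{3}\bar q\,\delta_\theta(q\delta_\theta u)_{ij}$ carries an explicit $h_\theta^2$ prefactor, so only its leading-order expansion $\tfrac{h_\theta^2}{3}\bar q\,\partial_\theta(q\,u_\theta)+O(h^4)$ is required.

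The remaining and crucial step is to match the two expansions monomial by monomial in the partial derivatives of $u$. The specific form of the compact operator is cooked up precisely for this cancellation: expanding $\tfrac{\partial_\theta q}{q}\,\partial_\theta\tilde v$ on the left and $\partial_\omega(\tilde q_1 u_\theta)$ on the right produces the same combinations involving $(\partial_\theta q)^2/q$ and $\partial_\theta^2 q$, which is exactly why $\tilde q_1=(\partial_\theta q)^2/q-\tfrac12\partial_\theta^2 q$ with prefactor $\tfrac{h_\theta^2}{3}$ is the right choice (mirroring the classical one-variable calculation). The residual $O(h^2)$ mismatch, which carries a $\partial_\theta\partial_\omega q$-type factor characteristic of the mixed-derivative setting, is then absorbed into the extra term $\tfrac{h_\theta^2}{3}\bar q\,\partial_\theta(q\,u_\theta)$ via the symmetric identity $\bar q=\partial_\omega(\partial_\theta q/q)=\partial_\theta(\partial_\omega q/q)$. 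I expect the main obstacle to be the sheer bookkeeping of the many mixed monomials produced by the Leibniz expansions; I would organize them by the multi-index of derivatives of $u$ (coefficients of $u_\theta$, $u_\omega$, $u_{\theta\theta}$, $u_{\theta\omega}$, $u_{\omega\omega}$, and the higher third-order terms) and verify the cancellation class by class, after which \eqref{Dcompact} drops out by the $\theta\leftrightarrow\omega$ symmetry of the construction.
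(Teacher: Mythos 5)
Your plan is correct and follows essentially the same route as the paper's Appendix~\ref{app:lmm1}: both rest on the Taylor expansions of $\nabla_{\!\omega}(q\nabla_{\!\theta}u)_{ij}$ and of the compact operator, and on the same algebraic identity expressing $\partial_\omega(q\,\partial_\theta^3 u)$ through $\partial_\theta^2\tilde v$, $\frac{\partial_\theta q}{q}\partial_\theta\tilde v$, $\bar q\,\partial_\theta(q\partial_\theta u)$ and $\partial_\omega(\tilde q_1\partial_\theta u)$, which is precisely the cancellation your monomial-by-monomial matching would verify. The only difference is organizational (you expand both sides to a canonical form and compare, while the paper massages the right-hand side directly into the compact-operator form), so the two arguments are equivalent.
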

\begin{proof}
    As the proof is standard, we sketch it in Appendix \ref{app:lmm1}.
\end{proof}
{
The compact operators $\mathscr{C}_{\theta\omega}$ and $\mathscr{D}_{\omega\theta}$ both involve seven-point stencils in the $(\theta,\omega)$-plane. Figure~\ref{fig:7point_stencil} illustrates the grid nodes used in these operators. The central point $(i,j)$ and its six surrounding neighbors are labeled accordingly. This stencil forms the discrete structure underlying the fourth-order accurate approximations in~\eqref{Ccompact} and~\eqref{Dcompact}.
}
\begin{figure}[!ht]
	\centering
     \subfigure[Stencil of $\mathscr{C}_{\theta\omega} u_{i,j}$]{\includegraphics[width=0.4\linewidth]{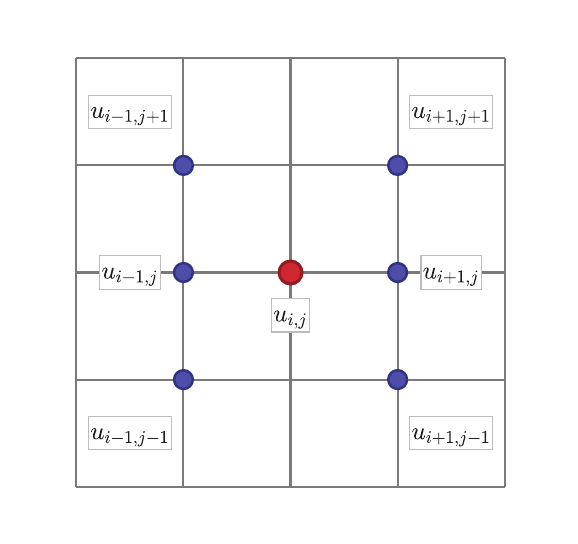}}
     \subfigure[Stencil of $\mathscr{D}_{\omega\theta} u_{i,j}$]{\includegraphics[width=0.4\linewidth]{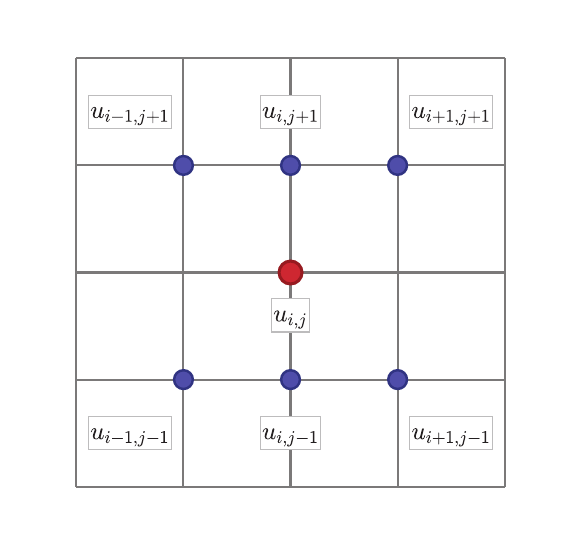}}
    \vspace*{-10pt}
    \caption{Grid and stencil points for the compact difference operators $\mathscr{C}_{\theta\omega}$ (a) and $\mathscr{D}_{\omega\theta}$ (b).} 
    \label{fig:7point_stencil}
\end{figure}

Using the same method as in Lemma \ref{Torucomppro2}, we can establish the following result.
\begin{lemma}\label{comppro2}
  Assume the conditions in Lemma \ref{comppro1} holds. 
  Then we have
  \begin{align}
    \mathscr{F}\mathscr{G}v_{ij}=\mathscr{G}\mathscr{F}v_{ij}+\mathcal{O}(h^4),
  \end{align}
  where $\mathscr{F}, \mathscr{G}$ denote any two operators chosen from $\mathscr{A}_{\theta}$, $\mathscr{B}_{\omega}$, $\mathscr{C}_{\theta\omega}$ and $\mathscr{D}_{\omega\theta}$.
\end{lemma}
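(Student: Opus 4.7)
The plan is to follow exactly the template established in the proof of Lemma \ref{Torucomppro2}: write each operator as $I$ plus an $\mathcal{O}(h^2)$ perturbation, expand the composition from both sides, and observe that everything beyond the leading two orders cancels identically, while the residual terms carry an overall $h^4$ prefactor. Concretely, each of the four operators admits the structural decomposition $\mathscr{F} = I + h_\theta^2 \mathcal{P}_\theta^F + h_\omega^2 \mathcal{P}_\omega^F$, where $\mathcal{P}_\theta^F, \mathcal{P}_\omega^F$ are linear combinations of the pure difference operators $\delta_\theta^2, \delta_\omega^2, \nabla_{\!\theta}, \nabla_{\!\omega}$ possibly composed with multiplication by a smooth coefficient built from $p,q$ and their first derivatives. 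Writing $\mathscr{F} = I + \mathcal{L}_F$ and $\mathscr{G} = I + \mathcal{L}_G$, the difference $\mathscr{F}\mathscr{G} v_{ij} - \mathscr{G}\mathscr{F} v_{ij}$ reduces to the discrete commutator $[\mathcal{L}_F, \mathcal{L}_G]v_{ij}$, and because both $\mathcal{L}_F$ and $\mathcal{L}_G$ are $\mathcal{O}(h^2)$, every term in this commutator automatically carries a prefactor of at least $h^4$ (either $h_\theta^4$, $h_\omega^4$, or $h_\theta^2 h_\omega^2$).

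Next, I would enumerate the five pairs not already treated, namely $(\mathscr{A}_\theta,\mathscr{C}_{\theta\omega})$, $(\mathscr{A}_\theta,\mathscr{D}_{\omega\theta})$, $(\mathscr{B}_\omega,\mathscr{C}_{\theta\omega})$, $(\mathscr{B}_\omega,\mathscr{D}_{\omega\theta})$, and $(\mathscr{C}_{\theta\omega},\mathscr{D}_{\omega\theta})$, and carry out the expansion for each. The key observation that keeps this manageable is that the pure difference operators $\delta_\theta^2, \delta_\omega^2, \nabla_{\!\theta}, \nabla_{\!\omega}$ commute pairwise exactly, since they are polynomials in the shift operators in the two independent directions. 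Therefore the only source of non-commutation is the multiplication by the variable coefficients $\partial_\theta p/p$, $\partial_\omega q/q$, $\partial_\theta q/q$, and $\partial_\omega q/q$. For these, one uses discrete Leibniz-type identities (for instance $\delta_\theta(a u)_{i+\frac12,j} = \bar{a}_{i+\frac12,j}\,\delta_\theta u_{i+\frac12,j} + \delta_\theta a_{i+\frac12,j}\,\bar{u}_{i+\frac12,j}$) to produce bounded operator compositions whose output is $\mathcal{O}(1)$ under the smoothness hypotheses of Lemma \ref{Torucomppro1} and Lemma \ref{comppro1}. Multiplying by the $h^4$ prefactor then delivers the claimed estimate.

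The anticipated obstacle is purely bookkeeping: the operators $\mathscr{C}_{\theta\omega}$ and $\mathscr{D}_{\omega\theta}$ each contain a six-term expansion, so the four pairs involving them generate noticeably longer intermediate formulas than the $(\mathscr{A}_\theta,\mathscr{B}_\omega)$ case handled in Lemma \ref{Torucomppro2}. I would keep the organization tight by grouping terms according to which $h^2$-prefactor they descend from ($h_\theta^4$, $h_\omega^4$, or $h_\theta^2 h_\omega^2$) and treating each group with a single uniform bound. In particular, for the hardest pair $(\mathscr{C}_{\theta\omega},\mathscr{D}_{\omega\theta})$, the cross-products involving $\delta_\theta^2$ and $\delta_\omega^2$ commute exactly, and the remaining cross-products pair a pure difference operator with a coefficient multiplication, whose commutator produces an additional difference of the smooth coefficient, yielding an extra bounded factor. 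Assembling these pieces, the final estimate $\mathscr{F}\mathscr{G} v_{ij} - \mathscr{G}\mathscr{F} v_{ij} = \mathcal{O}(h^4)$ follows, exactly as in Lemma \ref{Torucomppro2}, provided $v$ is sufficiently smooth (it suffices to assume $v \in C^6(\bar\Omega)$, matching the hypothesis of Lemma \ref{comppro1}) and $p,q \in C^4(\bar\Omega)$ are positive.
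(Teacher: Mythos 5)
Your proposal is correct and follows essentially the same route as the paper, which simply invokes ``the same method as in Lemma \ref{Torucomppro2}'': expand each operator as $I$ plus an $\mathcal{O}(h^2)$ perturbation, note that the pure difference operators commute exactly so only the variable-coefficient multiplications contribute to the commutator, and conclude that every surviving term carries an $h_\theta^4$, $h_\omega^4$, or $h_\theta^2 h_\omega^2$ prefactor times a bounded quantity. Your abstract commutator formulation $[\mathcal{L}_F,\mathcal{L}_G]$ is just a tidier packaging of the paper's explicit term-by-term cancellation.
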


\section{Compact difference method on the surface of torus and helical pipes}\label{sec:torus}
\setcounter{equation}{0}
\setcounter{lmm}{0}
\setcounter{thm}{0}

In this section, we specify our interests on the toroidal and poloidal, and helical pipe coordinates systems, and apply the compact finite difference methods studied in previous section to PDEs on the surface of pipe geometries and their variants.

\subsection{Toroidal and poloidal coordinates system}
Firstly, we specify the formulas in Section \ref{sec2} for the toroidal and poloidal coordinates system and then generate a torus pipe geometry. 
For this purpose, we choose the centerline to be a circular of radius $a > 0$ on the $xy$-plane:
\begin{equation}\label{comeg-2d}
    \bs{r}_c(\omega) = (a \cos \omega,\,  a \sin \omega,\,  0)^\top, \quad \omega \in [0, 2\pi).
\end{equation}
% It is straightforward to calculate from \eqref{eq:kap-tau} that the curvature of \eqref{comeg-2d} is $\kappa = 1/a$. Moreover, 
With a direct calculation, we derive from \eqref{movingorth} the moving frame
\begin{equation}\label{torus-frame}
    \bs{e}_1 = (-\sin \omega,\;  \cos \omega,\; 0)^\top, \quad \bs{e}_2 = (-\cos \omega,\;  -\sin \omega,\;  0)^\top, \quad \bs{e}_3 = \bs{k}.
\end{equation}

Next, we construct a torus pipe with general surface. 
We choose the cross-sectional curve to be 
\begin{equation*}
    \bs v = R(\theta,\omega)\cos \theta \bs e_2 + R(\theta,\omega)\sin\theta \bs e_3.
\end{equation*}
Then as a direct consequence of Proposition \ref{localglobal} we immediately derive the toroidal and poloidal coordinates system 
\begin{equation}\label{torus-coor}
\left\{
\begin{aligned}
x &= a \cos \omega - R(\theta,\omega) \cos \theta \cos \omega, \\
y &= a \sin \omega - R(\theta,\omega) \cos \theta \sin \omega, \\
z &= R(\theta,\omega) \sin \theta,
\end{aligned}
\right.
\end{equation}
for $\theta, \omega \in [0, 2\pi)$.
\begin{theorem}\label{prop:torus}
    For the toroidal and poloidal coordinates system \eqref{torus-coor}, 
    the Laplace--Beltrami operator $\widetilde \Delta_0$ can be written as
\begin{equation}\label{eq:lap-torus}
\begin{split}
    \widetilde \Delta_{0} u
    &= \frac{1}{R^2\rho_0} \Big\{\frac{\partial}{\partial\theta}\Big( \rho_0
    \frac{\partial u}{\partial\theta} \Big)+\frac{\partial}{\partial\omega}\Big( \frac{R^2}{\rho_0}
    \frac{\partial u}{\partial\omega} \Big) \Big\},
\end{split}
\end{equation}
for $\rho_0 = a- R\cos\theta.$
    Particularly, the Jacobian $\mathbb J=rR^2(a-rR\cos\theta)>0$ provided $\theta \in [\pi/2, 3\pi/2]$ and
    \begin{equation}\label{Toridal-Rtheta}
        \max_{\omega\in [0,2\pi]} R(\theta,\omega)<  a\sec \theta,\quad \text{for }\, \theta\in [0,\pi/2)\cup (3\pi/2, 2\pi).
    \end{equation}
\end{theorem}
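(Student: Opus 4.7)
The plan is to obtain Theorem~4.1 as a direct specialization of Theorem~\ref{thm:jandg} and formula~\eqref{eq:lap-bel} to the circular centerline~\eqref{comeg-2d}. The entire argument reduces to computing the curvature, torsion, and speed of the centerline, and then substituting these into the already-established general formulas.

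First I would differentiate $\bs r_c(\omega)=(a\cos\omega, a\sin\omega,0)^\top$ to get $\bs r_c'(\omega)=(-a\sin\omega, a\cos\omega, 0)^\top$ and $\bs r_c''(\omega)=(-a\cos\omega,-a\sin\omega,0)^\top$, giving the speed $\|\bs r_c'\|=a$. Computing $\bs r_c'\times\bs r_c''=(0,0,a^2)^\top$ and $\bs r_c'''=(a\sin\omega,-a\cos\omega,0)^\top$, the defining formulas~\eqref{eq:kap-tau} yield the curvature $\kappa=1/a$ and the torsion $\tau=0$. A direct substitution into \eqref{movingorth} then produces the moving frame \eqref{torus-frame}; in particular $\bs e_3=\bs k$ is constant, consistent with the planar centerline.

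Next I would specialize \eqref{eqc3:Jresn} to this centerline. Since $\|\bs r_c'\|=a$ and $\kappa=1/a$, one has
\begin{equation*}
    \mathbb J = rR^2(\theta,\omega)\,\|\bs r_c'\|\bigl(1-r\kappa R\cos\theta\bigr) = rR^2(\theta,\omega)\bigl(a-rR(\theta,\omega)\cos\theta\bigr),
\end{equation*}
so that $\rho(r,\theta,\omega)=a-rR\cos\theta$. The positivity condition \eqref{eqc3:Rconn} of Theorem~\ref{thm:jandg} becomes $\max_{\omega}R(\theta,\omega)<\kappa^{-1}\sec\theta=a\sec\theta$ on the half-range where $\cos\theta>0$, while on $\theta\in[\pi/2,3\pi/2]$ the factor $rR\cos\theta$ is already nonpositive and positivity holds automatically. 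This is exactly \eqref{Toridal-Rtheta}.

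Finally, since $\tau=0$ here, every term involving $\tau$ in the general Laplace--Beltrami formula \eqref{eq:lap-bel} vanishes identically, and using $\rho_0=\|\bs r_c'\|(1-\kappa R\cos\theta)=a-R\cos\theta$ at $r=1$, formula \eqref{eq:lap-bel} collapses directly to the claimed expression \eqref{eq:lap-torus}. No obstacle of substance is expected: the only care needed is to check that the frame returned by \eqref{movingorth} agrees with \eqref{torus-frame} (in particular that $\bs e_2=\bs e_3\times\bs e_1=(-\cos\omega,-\sin\omega,0)^\top$), so that the cross-section $\bs v=R\cos\theta\,\bs e_2+R\sin\theta\,\bs e_3$ reproduces the signs in \eqref{torus-coor}; after this bookkeeping the theorem is just an instance of Theorem~\ref{thm:jandg} and \eqref{eq:lap-bel}.
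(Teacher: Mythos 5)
Your proposal is correct and follows essentially the same route as the paper's own proof: compute $\kappa=1/a$, $\tau=0$, $\|\bs r_c'\|=a$ from \eqref{eq:kap-tau} and specialize Theorem~\ref{thm:jandg} and \eqref{eq:lap-bel}, with the $\tau$-dependent terms dropping out. Your version simply writes out the intermediate derivative and cross-product computations that the paper leaves as ``straightforward.''
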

\begin{proof}
    It is straightforward to calculate from \eqref{eq:kap-tau} that the curvature and torsion of \eqref{torus-coor} is $\kappa=\frac1a$ and $\tau=0$, respectively.
    Then from Theorem \ref{thm:jandg}, we can immediately derive the corresponding Laplace--Beltrami operator $\widetilde \Delta_0$ and Jacobian $\mathbb J$ of the coordinate system $(r, \theta, \omega)$ in \eqref{torus-coor}.
\end{proof}

Next, we illustrate some interesting torus pipes with different cross-sections.
Following the approach in \cite{Wang2023}, we tabulate several cross-sectional functions $R(\theta,\omega)$ in Table \ref{tab:curve-plane}.
In Figure~\ref{domain1}, we plot the graphs of cross-sectional functions $R(\theta,\omega)$ (a)-(d) in Table \ref{tab:curve-plane} and the corresponding pipe geometries generated by the toroidal and poloidal coordinates system \eqref{torus-coor} at $r=R(\theta, \omega)$ with $a=2$.
Additionally, we depict more complex pipe geometries in Figure \ref{toruspipe3} with cross-sectional function given by (e) and (f) in Table \ref{tab:curve-plane}. 
Note that for the sine-modulated cross-sectional function, parameters $A$ and $k$ represent the amplitude and wave number, respectively. In this case, we take $a = 2$ and $A = 0.3$, $k = 8$ or $A = 0.2$, $k = 10$ (see Figure \ref{toruspipe3} (a)-(b)). 
For the pseudo-random cross-sectional function, the amplitude $ A_n $ is typically set as $1/(\sigma n)$, where $\sigma$ denotes the amplitude of the perturbation, to suppress high-frequency components and ensure smoothness.
The frequency parameters $ a_n, b_n $ are chosen as random integers to maintain periodicity of $ 2\pi $ in both $ \theta $ and $ \omega $, while the phase shift $ c_n $ is randomly sampled from $ [0, 2\pi] $ to enhance pseudo-random characteristics. 
In our simulation, we set $a =2$, the truncation parameter $K=10$ and $\sigma=12$ or $16$ in $A_n$. The resulting pipe geometries are shown in Figure \ref{toruspipe3}  (c)-(d).

\begin{table}[!h]
    \centering
    \caption{Several cross-sectional functions (depicted in Figure \ref{domain1}).}
    \label{tab:curve-plane}
    \renewcommand{\arraystretch}{1.8}
    \setlength{\tabcolsep}{3pt}
    \vspace*{-10pt}
    \begin{tabular}{|c|c|c||c|c|c|}
        \hline
         & Name & shape curve $R(\theta, \omega)$ & & Name & shape curve $R(\theta, \omega)$ \\
         \hline
        (a) & \makecell[c]{Circular} & $\frac{1}{2}$ & (d) & Star & $\frac{3}{5} + \frac3{40} \sin(5\theta)$ \\ 
        \hline
        (b) & \makecell[c]{Cardioid} & $\frac25-\frac13\sin\theta$ & (e) & Sine & $\frac12+\frac A2\sin(k\omega)$\\
        \hline
        (c) & \makecell[c]{Butterfly} & $\frac15\re^{\cos\theta} - \frac15\cos(4\theta) + \frac35 \sin^5(\frac{\theta}2)$ & (f) & Random & $\frac12+\sum_{n=1}^{K} \frac {A_n}{2} \sin(a_n \theta + b_n \omega + c_n)$ \\
        \hline
    \end{tabular}
\end{table}

\begin{figure}[!h]
    \centering
    \includegraphics[width=0.24\linewidth]{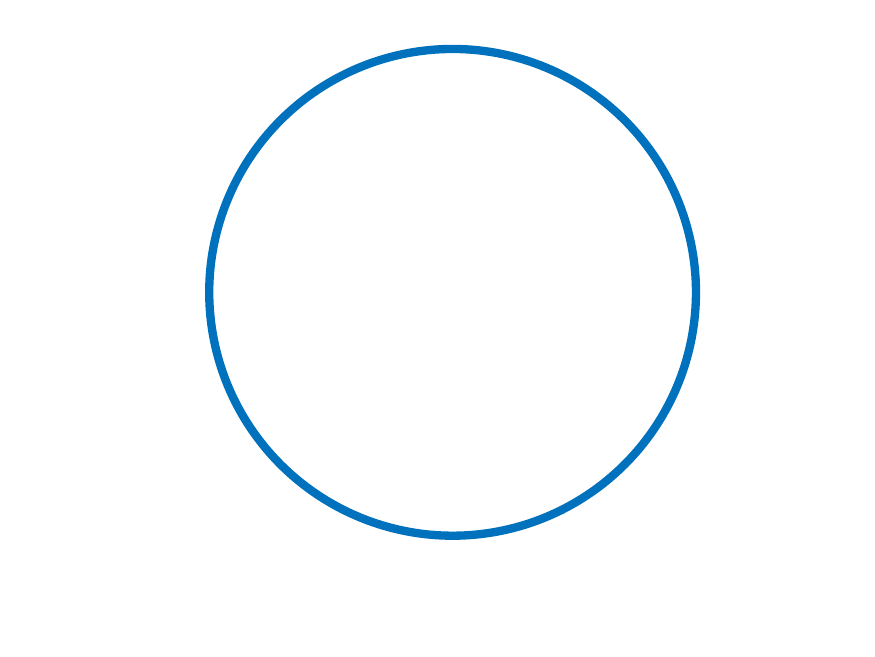} 
    \includegraphics[width=0.24\linewidth]{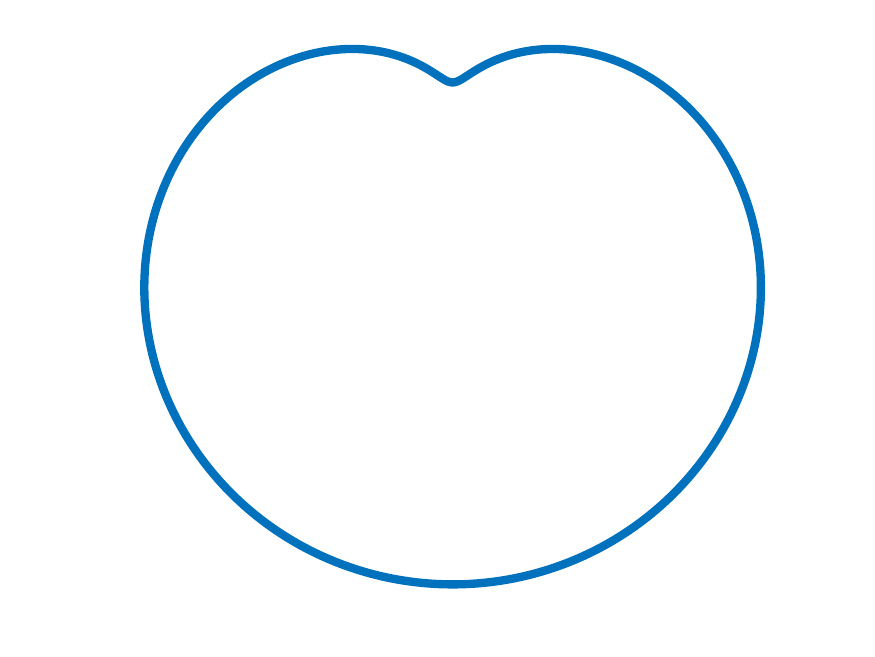} 
    \includegraphics[width=0.24\linewidth]{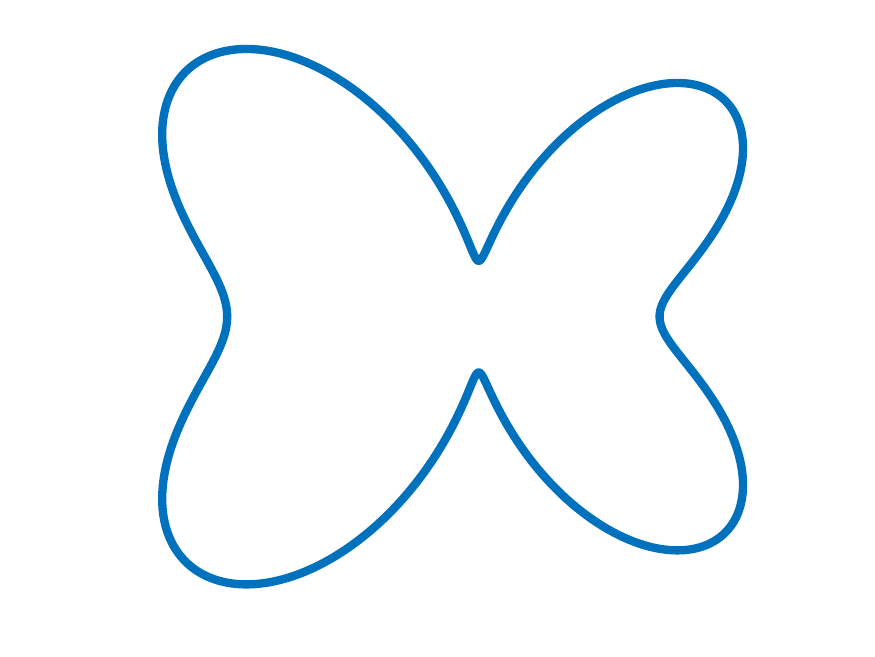} 
    \includegraphics[width=0.24\linewidth]{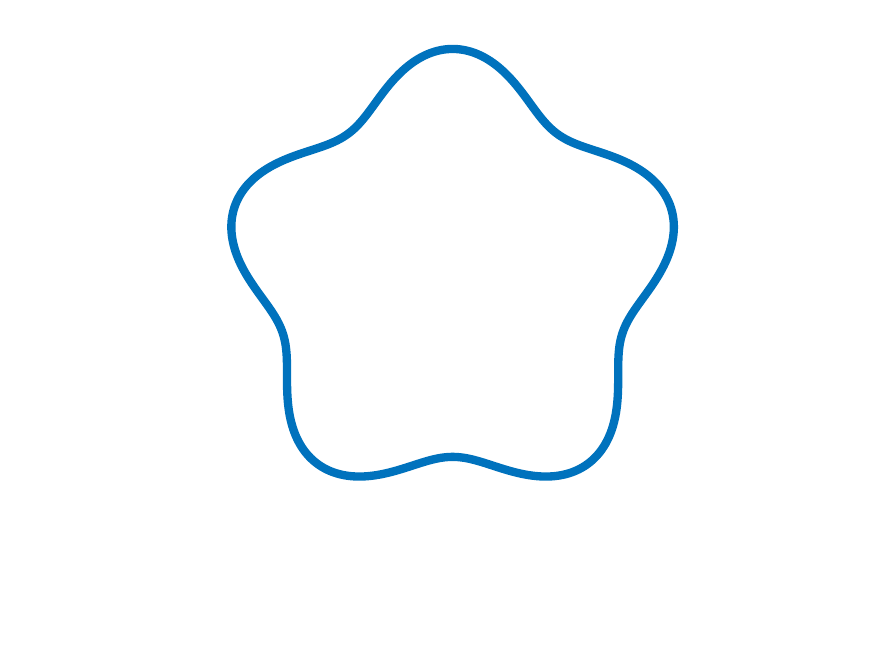} \\
    \subfigure[]{\includegraphics[width=0.24\linewidth]{figures/Ctypepipe}} 
    \subfigure[]{\includegraphics[width=0.24\linewidth]{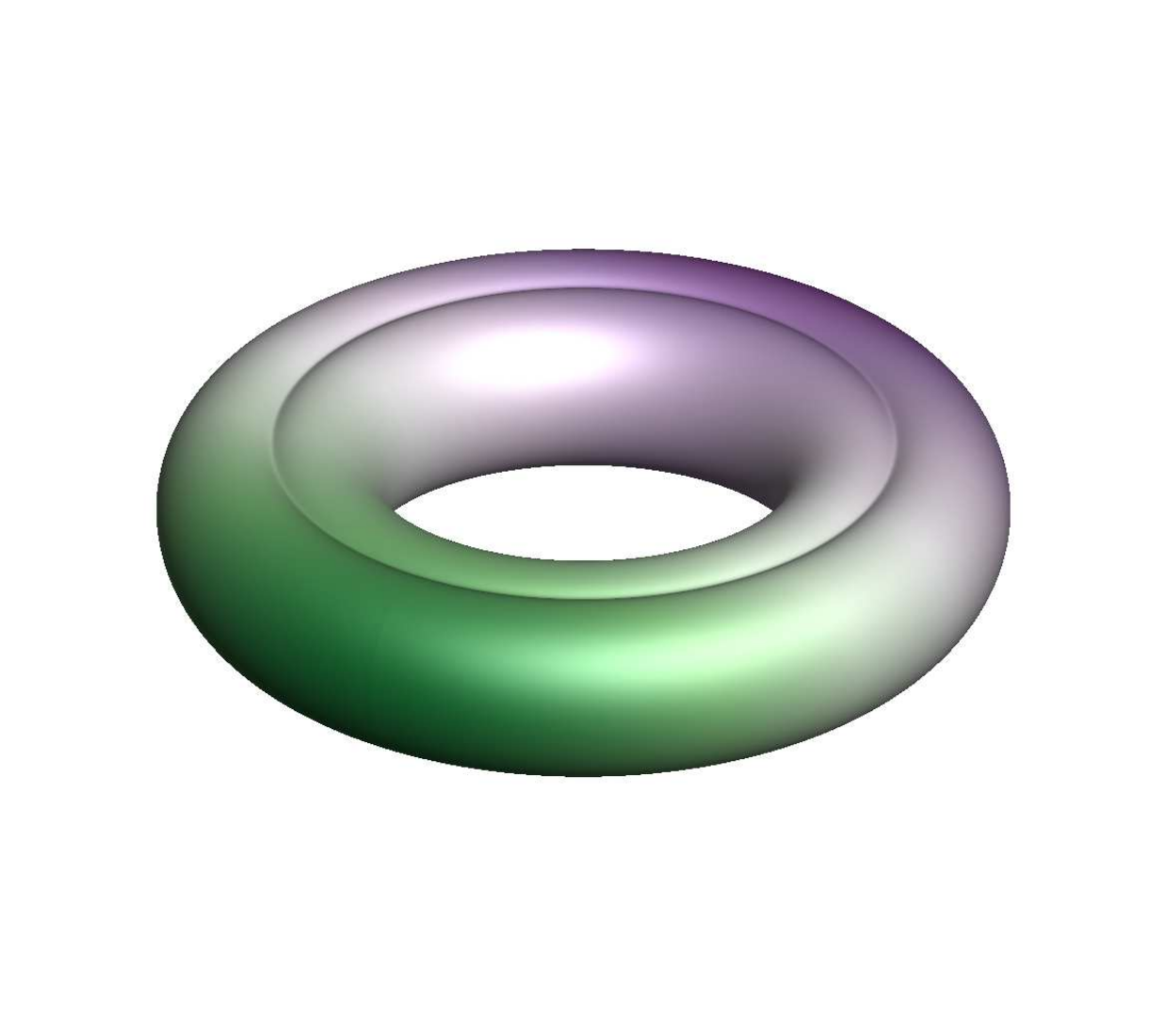}} 
    \subfigure[]{\includegraphics[width=0.24\linewidth]{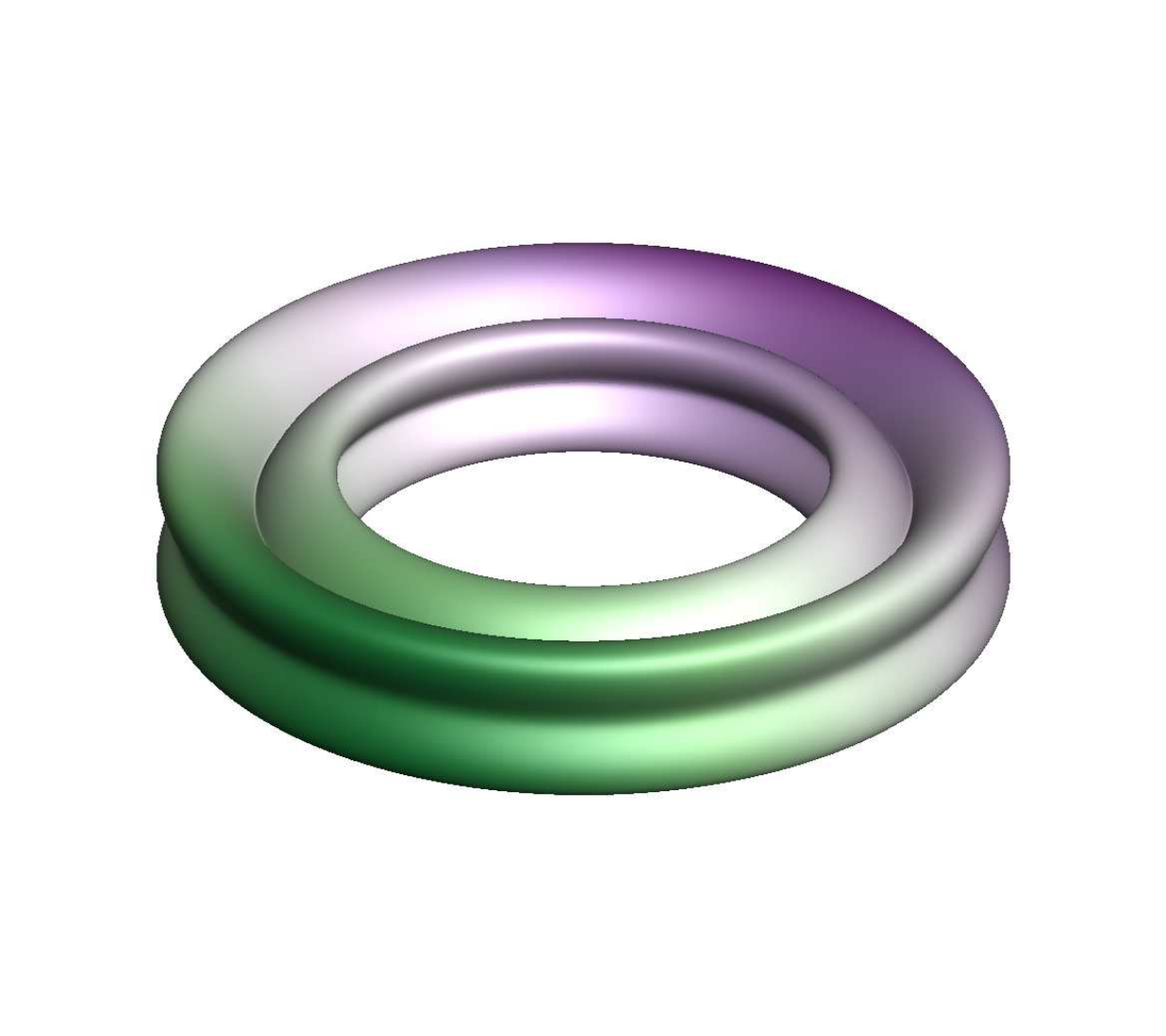}} 
    \subfigure[]{\includegraphics[width=0.24\linewidth]{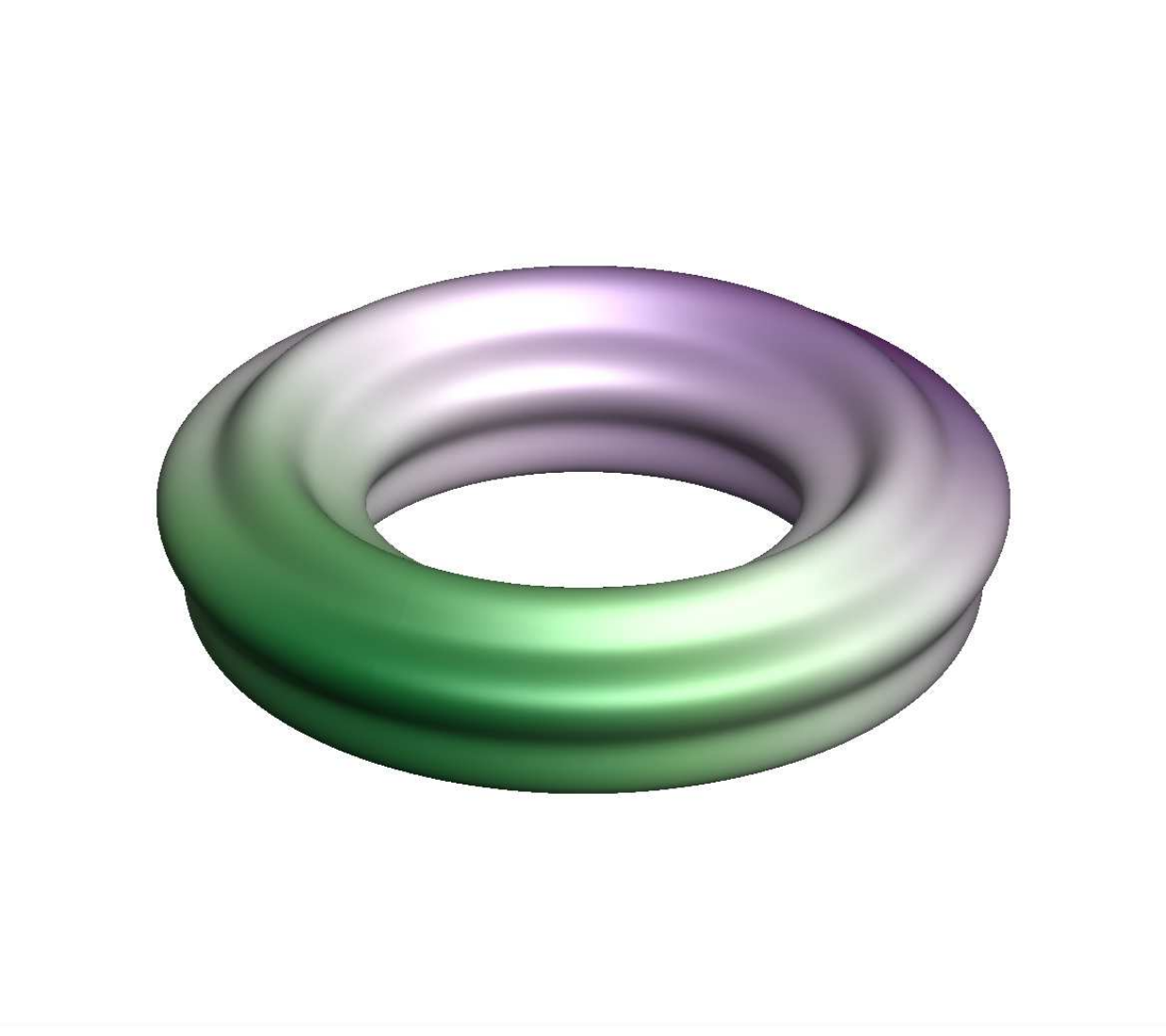}}
    \vspace*{-10pt}
    \caption{Several cross-sectional curves $(R(\theta,\omega)\cos\theta,R(\theta,\omega)\sin\theta)$ on the $xy$-plane tabulated in Table \ref{tab:curve-plane} (first row) and the corresponding pipe geometries (second row).}
    \label{domain1}
\end{figure}
\begin{figure}[!t]
    \centering
    \subfigure[$A=0.3,\, k=8$]{\includegraphics[width=0.24\linewidth]{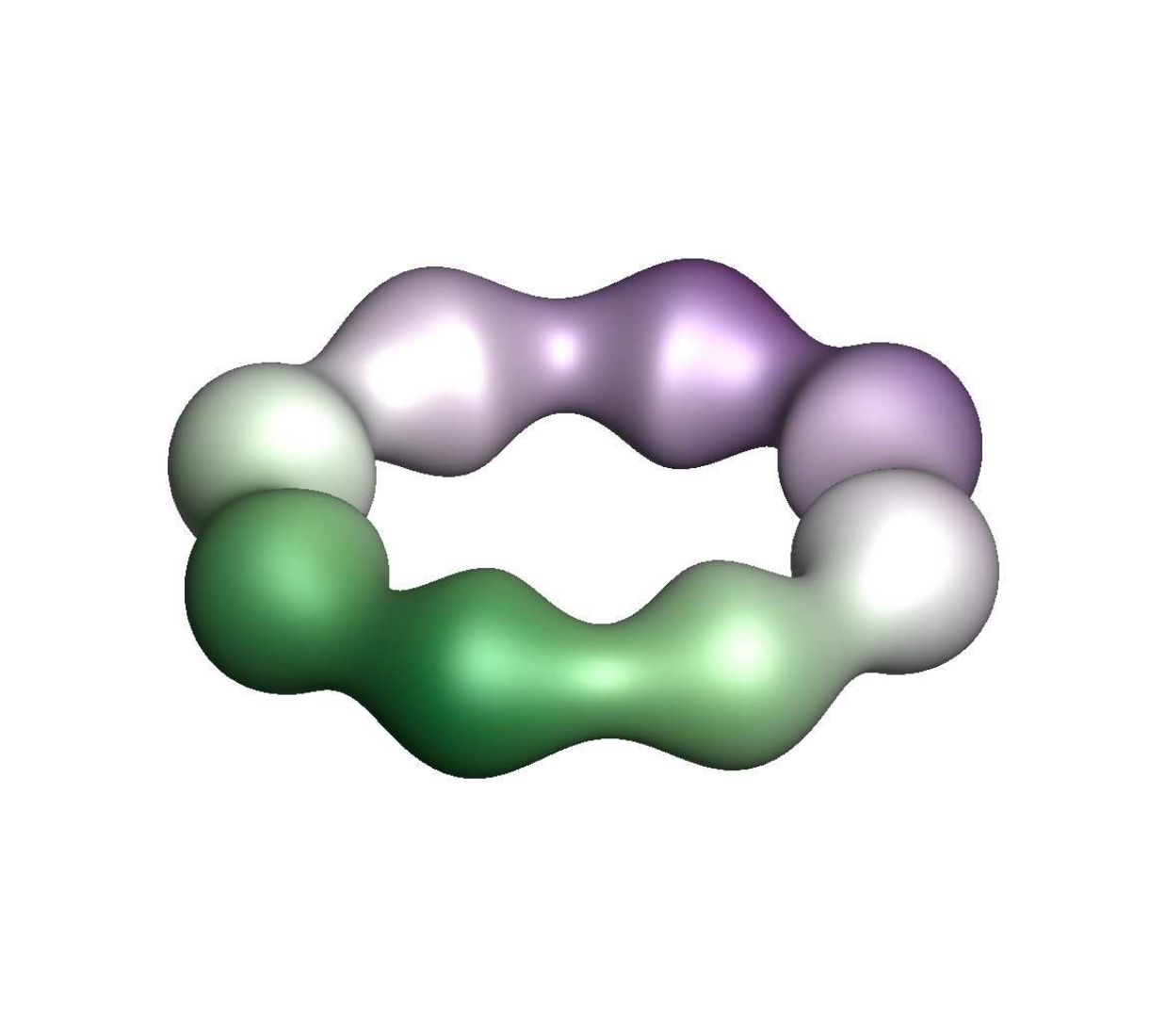}}
    \subfigure[$A=0.2,\, k=10$]{\includegraphics[width=0.24\linewidth]{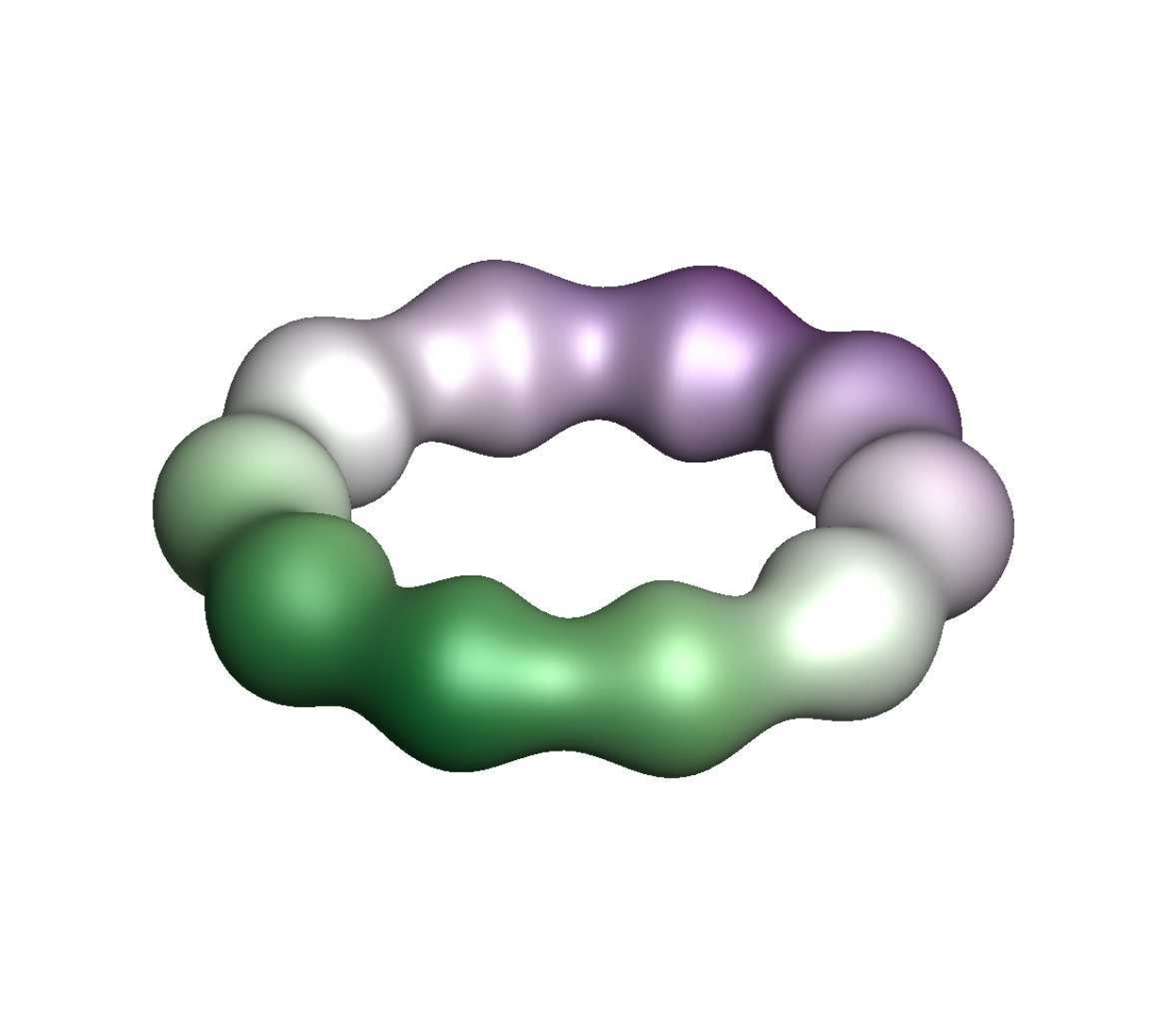}}
    \subfigure[$K=10,\, A_n=\frac1{12n}$]{\includegraphics[width=0.24\linewidth]{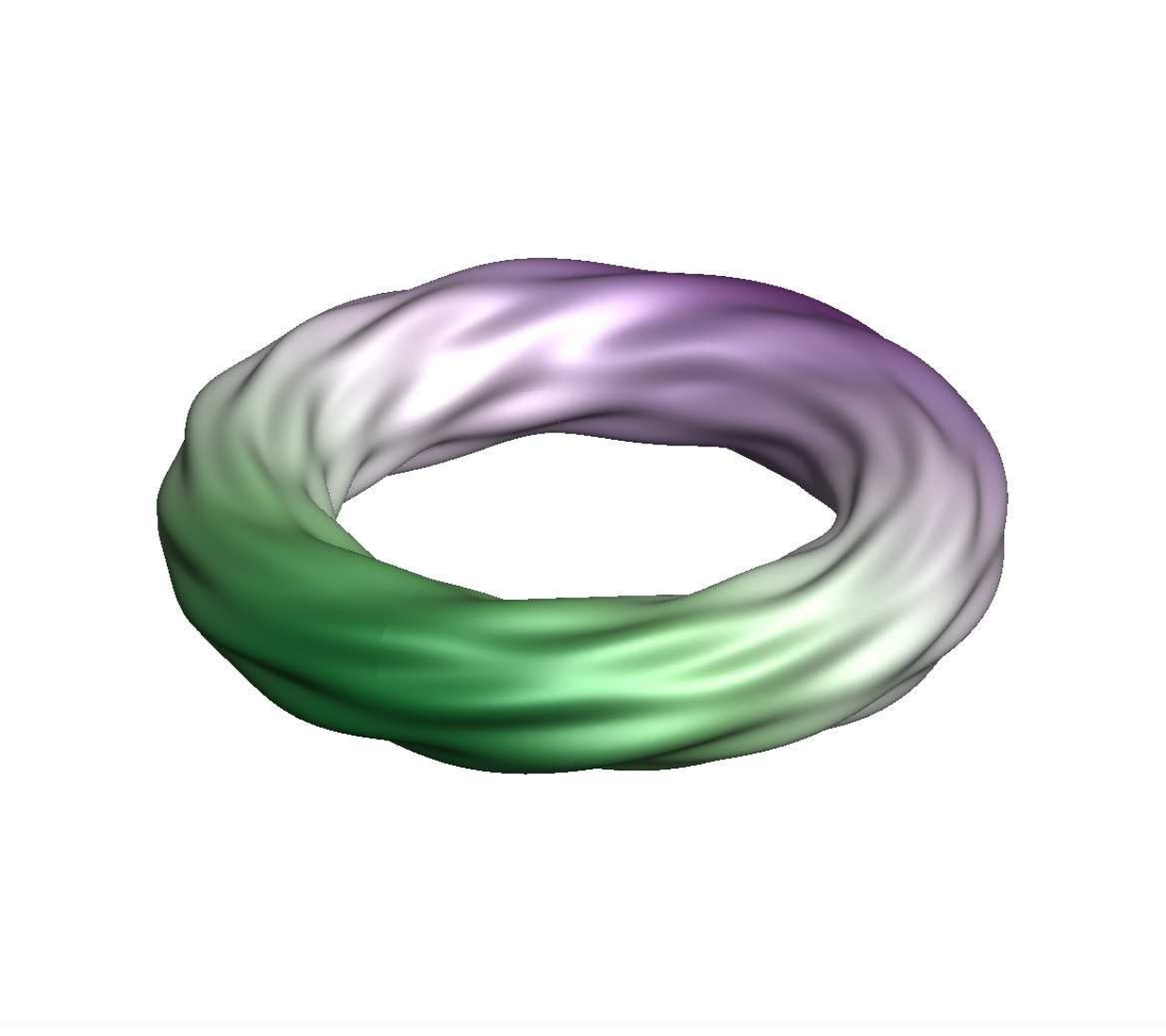}}
    \subfigure[$K=10,\, A_n=\frac1{16n}$]{\includegraphics[width=0.24\linewidth]{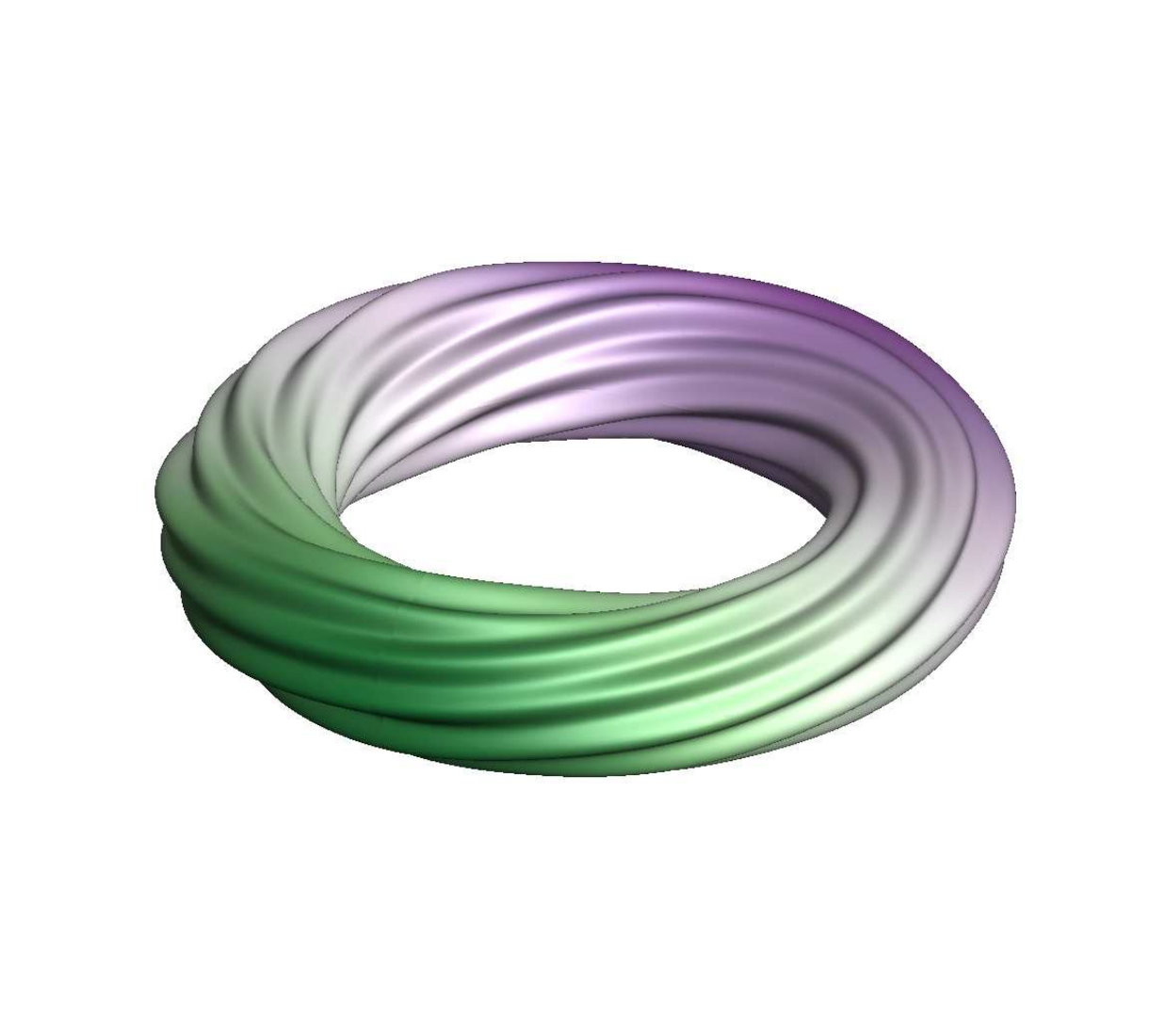}}
    \vspace*{-10pt}
    \caption{Cases (e) and (f) in Table \ref{tab:curve-plane}: Pipe geometries with sine-modulated surface (a)-(b) and pseudo-rand surface (c)-(d).}
    \label{toruspipe3}
\end{figure}

\subsection{Helical pipe coordinate system}
The centerline of helical pipe can be represented as
\begin{equation}\label{hel-cent}
  \bs{r}_c(\omega) = (a \cos \omega,\;  a \sin \omega,\;  b\, \omega)^\top, \quad \omega \in I_\omega,
\end{equation}
where the parameter $a$ represents the amplitude or radius of the spiral, and $b$ denotes the vertical growth rate. 

As a direct consequence of \eqref{movingorth}, we have the following moving orthogonal basis for \eqref{hel-cent}:
\begin{align}\label{movingOrth}
\begin{split}
  \bs{e}_1=(-\alpha\sin\omega,\alpha\cos\omega,\beta)^{\top},\;
  \bs{e}_2=(-\cos\omega,-\sin\omega,0)^{\top},\;
  \bs{e}_3=(\beta\sin\omega,-\beta\cos\omega,\alpha)^{\top},
\end{split}
\end{align}
where $\alpha=\kappa \|\bs r_c'\|$ and $\beta=\tau \|\bs r_c'\|$ with the curvature $\kappa$ and torsion $\tau$ of \eqref{hel-cent} being given by
\begin{align}\label{kap-tau-hel}
  \kappa = \frac{a}{a^2 + b^2} \quad\text{and}\quad \tau = \frac{b}{a^2 + b^2}.
\end{align}

Next, we derive the coordinate system for helical pipes with complex cross-sectional geometries. 
Assume the cross-sectional curve is parameterized as in \eqref{curve-v}. 
Then in view of \eqref{xcoord}, we can readily derive from \eqref{hel-cent}-\eqref{movingOrth} the helical pipe coordinate system
\begin{align}\label{coodinatesnew}
\left\{\begin{aligned}
  x&=a\cos\omega+R(\theta,\omega)(\beta\sin\theta\sin\omega-\cos\theta\cos\omega),\\
  y&=a\sin\omega-R(\theta,\omega)(\beta\sin\theta\cos\omega+\cos\theta\sin\omega),\\
  z&=b\omega+\alpha R(\theta,\omega)\sin\theta,
\end{aligned}\right.
\end{align}
for $(\theta,\omega) \in \{(\theta,\omega) : \theta \in [0, 2\pi), \omega \in I_\omega\}.$

Following Theorem \ref{thm:jandg} and \eqref{eq:lap-bel}, we have
\begin{theorem}\label{HpGenJGTh}
For the curvilinear coordinate transformation \eqref{coodinatesnew}, the Laplace--Beltrami operator $\widetilde \Delta_0$ can be written as
\begin{equation}\label{eq:lap-hel}
\begin{split}
    \widetilde \Delta_0 u 
    = \frac{1}{R^2\rho_0} \Big\{\frac{\partial}{\partial\theta}\Big( \Big(\rho_0+\frac{\beta^{2}R^2}{\rho_0}\Big)
    \frac{\partial u}{\partial\theta} \Big)+\frac{\partial}{\partial\omega}\Big( \frac{R^2}{\rho_0}
    \frac{\partial u}{\partial\omega} \Big) 
    -\beta \frac{\partial}{\partial\theta}\Big( \frac{R^2}{\rho_0}
    \frac{\partial u}{\partial\omega} \Big)
    -\beta \frac{\partial}{\partial\omega}\Big( \frac{R^2}{\rho_0}
    \frac{\partial u}{\partial\theta} \Big) \Big\},
\end{split}
\end{equation}
where $\beta=\frac{b}{\sqrt{a^2+b^2}}$ and $\rho_0=\rho(1,\theta,\omega)=\|\bs r_c'\|(1-\kappa R \cos\theta).$
Particularly, the Jacobian $\mathbb J=rR^2 \|\bs r_c'\| (1-\kappa rR\cos\theta)>0$ provided $\theta \in [\pi/2, 3\pi/2]$ and
\begin{equation}\label{HpRcon}
  \max_{\omega \in \bar I_\omega}R(\theta,\omega) <  \kappa^{-1} \sec\theta,\quad \text{for }\; \theta \in [0, \pi/2) \cup (3\pi/2, 2\pi).
\end{equation}
\end{theorem}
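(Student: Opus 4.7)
The proof will be a direct specialization of the general framework established in Theorem \ref{thm:jandg} and the Laplace--Beltrami formula \eqref{eq:lap-bel}. The plan is to compute the geometric invariants of the helical centerline \eqref{hel-cent} explicitly, plug them into the general formulas, and verify that the resulting expressions match \eqref{eq:lap-hel} and the positivity claim for $\mathbb J$.

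First I would differentiate \eqref{hel-cent} to get $\bs r_c'(\omega)=(-a\sin\omega,\,a\cos\omega,\,b)^\top$, giving the constant speed $\|\bs r_c'\|=\sqrt{a^2+b^2}$, together with $\bs r_c''=(-a\cos\omega,-a\sin\omega,0)^\top$ and $\bs r_c'''=(a\sin\omega,-a\cos\omega,0)^\top$. Substituting these into \eqref{eq:kap-tau} yields the curvature $\kappa=a/(a^2+b^2)$ and torsion $\tau=b/(a^2+b^2)$ already recorded in \eqref{kap-tau-hel}, so that in particular $\alpha=\kappa\|\bs r_c'\|=a/\sqrt{a^2+b^2}$ and $\beta=\tau\|\bs r_c'\|=b/\sqrt{a^2+b^2}$, consistent with the moving frame \eqref{movingOrth}. (Strictly, one should also remark that $\bs r_c'\times\bs r_c''$ has magnitude $a\sqrt{a^2+b^2}\ne 0$ so the TNB frame and the quantities in \eqref{eq:kap-tau} are well defined.)

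Next, I would invoke Theorem \ref{thm:jandg} directly: since $\|\bs r_c'\|$ is constant, the quantity $\rho(r,\theta,\omega)=\|\bs r_c'\|(1-r\kappa R\cos\theta)$ specializes to $\rho_0=\|\bs r_c'\|(1-\kappa R\cos\theta)$ at $r=1$, and the Jacobian formula \eqref{eqc3:Jresn} gives $\mathbb J=rR^2\|\bs r_c'\|(1-r\kappa R\cos\theta)$. The positivity discussion is identical to the one in Theorem \ref{thm:jandg}: positivity is automatic when $\cos\theta\le 0$, and otherwise the requirement $rR\cos\theta<\kappa^{-1}$, taken at $r=1$ and maximized over $\omega\in\bar I_\omega$, becomes precisely \eqref{HpRcon}.

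Finally, to obtain \eqref{eq:lap-hel}, I would substitute the specialized data into \eqref{eq:lap-bel}, using the identity $\tau\|\bs r_c'\|=\beta$ to absorb the torsion factors in both the $\partial_\theta\!\bigl((\rho_0+\tau^2\|\bs r_c'\|^2R^2/\rho_0)\partial_\theta u\bigr)$ term and the two mixed-derivative terms. No further simplification is needed. The main care in the argument lies entirely in bookkeeping---keeping track of which instances of $\tau\|\bs r_c'\|$ collapse into $\beta$ and verifying the mixed-derivative signs---rather than in any substantive analytic difficulty, precisely because $\|\bs r_c'\|$ is constant along a helix and all nontrivial work has already been done in Theorem \ref{thm:jandg} and \eqref{eq:lap-bel}.
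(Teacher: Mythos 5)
Your proposal is correct and follows essentially the same route as the paper, which presents Theorem \ref{HpGenJGTh} as a direct specialization of Theorem \ref{thm:jandg} and formula \eqref{eq:lap-bel}: compute $\kappa=a/(a^2+b^2)$, $\tau=b/(a^2+b^2)$, and the constant speed $\|\bs r_c'\|=\sqrt{a^2+b^2}$ for the helix, then substitute with $\tau\|\bs r_c'\|=\beta$ and carry over the positivity argument for $\mathbb J$ verbatim. Your explicit verification of the intermediate quantities ($\bs r_c'\times\bs r_c''$ and its norm) is sound and merely makes explicit what the paper leaves implicit.
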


\begin{remark}
To exactly characterize the range of $R(\theta,\omega)$, we depict in Figure~\ref{fig:helical-pitch-condition} the helical curve on a half-cylinder surface (left) and its planar development (right). The solid blue line denotes the helix on the front half-cylinder. The right triangle $PQO$ in the unfolded domain illustrates the geometric relation between adjacent cross-sections, where $\overline{PQ}$ is the normal projection distance $d$, $\overline{PO}$ is the helix pitch $H=2\pi b$, and the angle at vertex $P$ is the lead angle $\varsigma=\arctan\frac ba$. The collision avoidance condition
\begin{align*}
    R(\theta,\omega) + R\Big(\theta,\omega-\frac{2\pi a^2}{a^2+b^2}\Big) < d  = H\cos\varsigma = \frac{2\pi ab}{\sqrt{a^2+b^2}},
\end{align*}
ensures that the sum of the radii of two successive cross-sections remains below the projected spacing along the normal direction, thereby preventing geometric self-intersection.
\begin{figure}[!ht]
	\centering
    \includegraphics[width=0.85\linewidth]{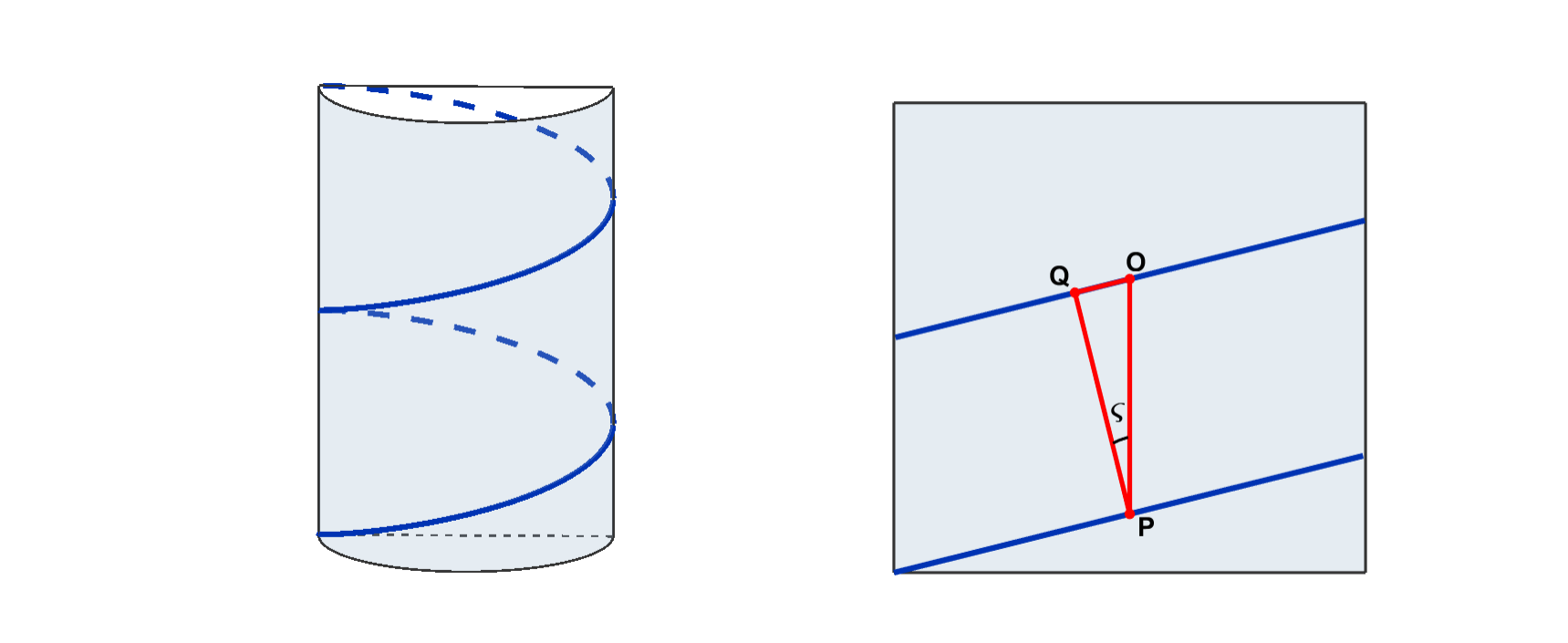}
    \vspace*{-10pt}
    \caption{Helical curve on a half-cylinder surface (left) and its lateral surface development with pitch geometry (right).}
	\label{fig:helical-pitch-condition}
\end{figure}
\end{remark}

Next, we consider helical pipes with the same cross-sectional curves as torus pipes (see Table \ref{tab:curve-plane}). The difference is that we consider them on a helix, and the radius of the pipe is variable at each location.
According to the helical pipe coordinate system defined in \eqref{coodinatesnew}, we generate the corresponding helical pipes, as illustrated in Figure~\ref{helicalpipe1} and \ref{helicalpipe2}.
Here we take $a = 8$, $b = 1$ and $I_\omega=(0, 2\pi).$

\begin{figure}[!t]
    \centering
    \subfigure[]{\includegraphics[width=0.245\linewidth]{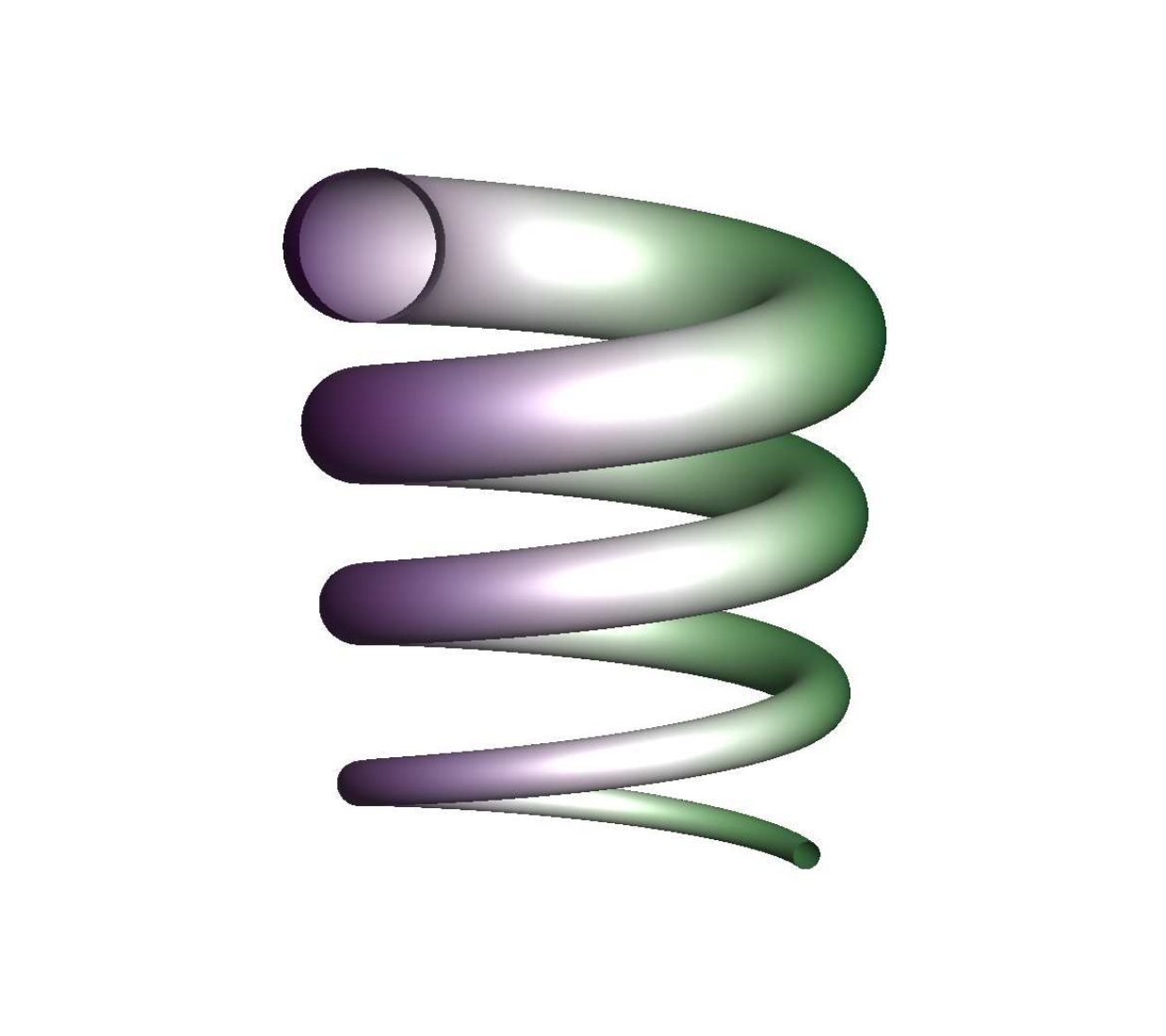}} \hspace*{-5pt}
    \subfigure[]{\includegraphics[width=0.245\linewidth]{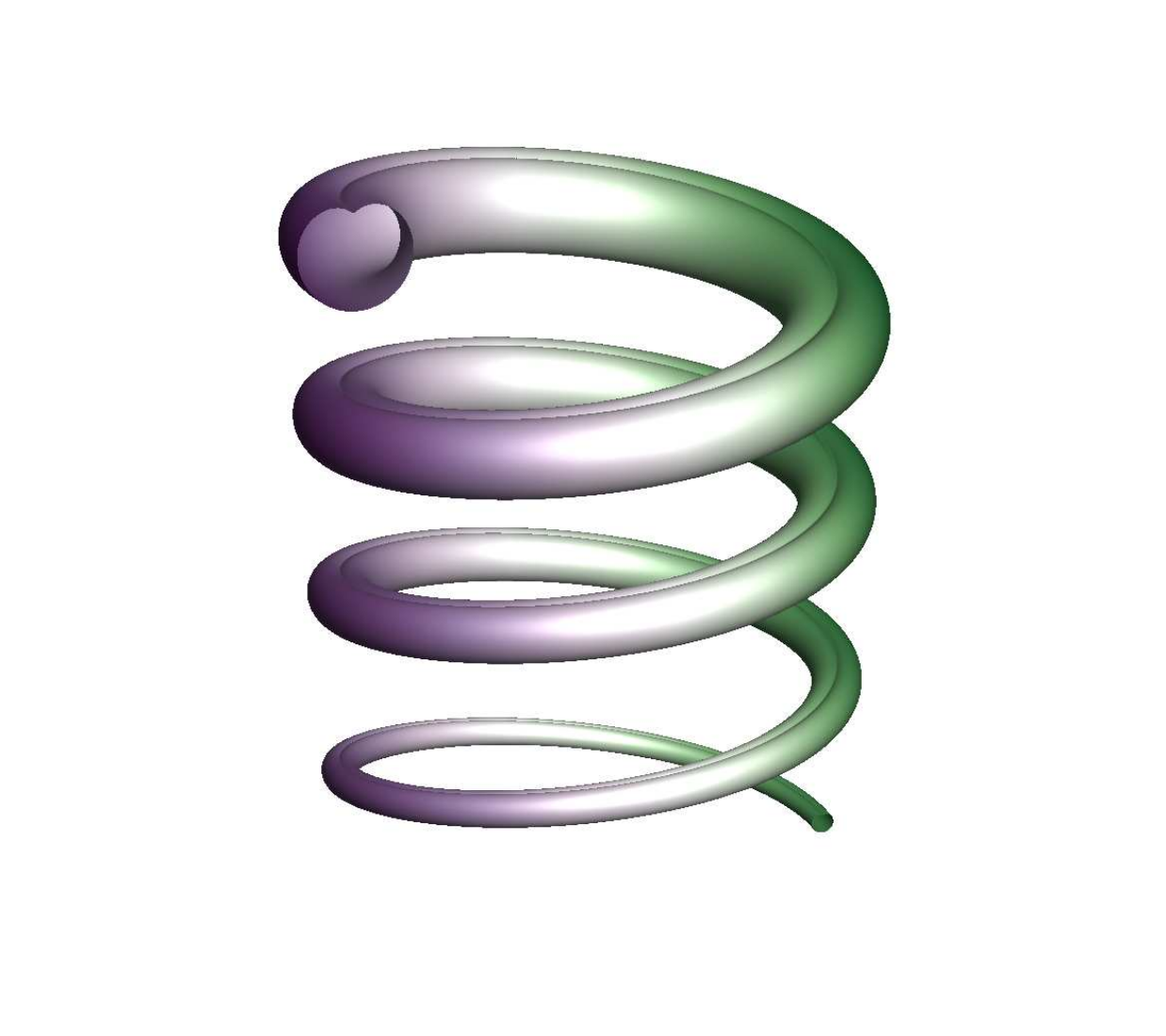}} \hspace*{-5pt}
    \subfigure[]{\includegraphics[width=0.245\linewidth]{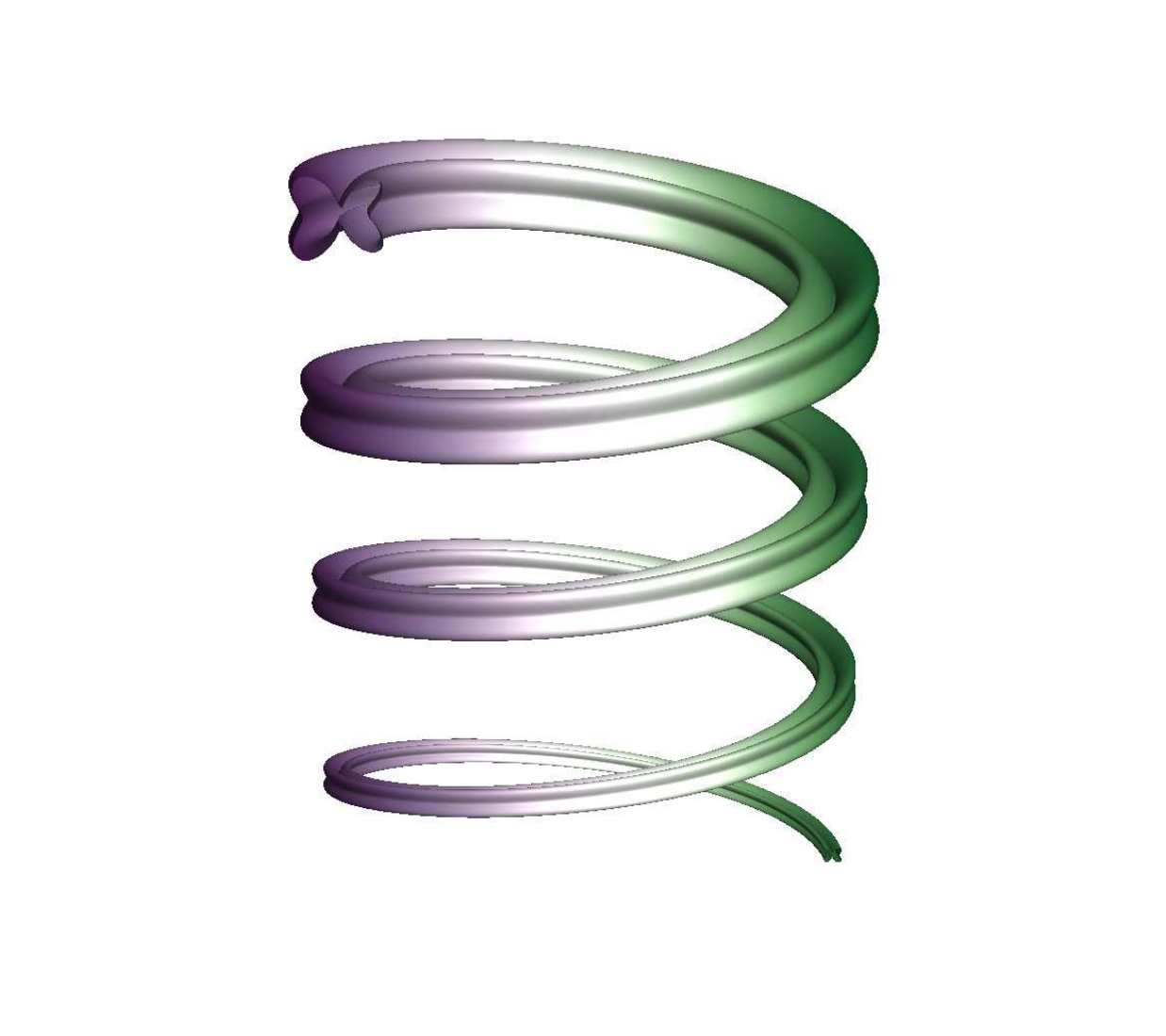}} \hspace*{-5pt}
    \subfigure[]{\includegraphics[width=0.245\linewidth]{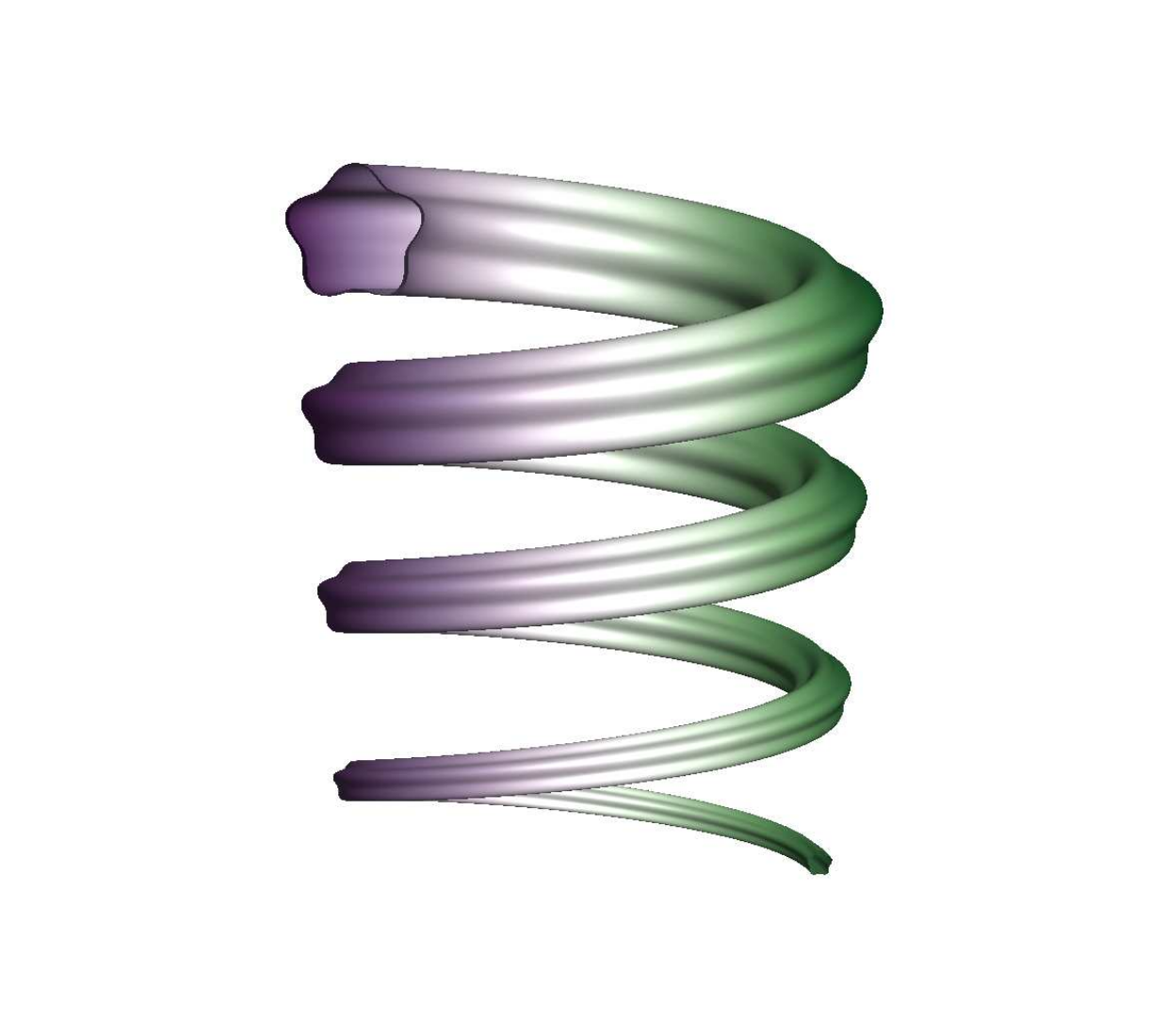}} 
	\vspace*{-10pt}
    \caption{Cases (a)-(d) in Table \ref{tab:curve-plane}: Helical pipe geometries with cross-sectional curves tabulated in Table \ref{tab:curve-plane}.}
	\label{helicalpipe1}
\end{figure}

\begin{figure}[!t]
	\centering
    \subfigure[$A=0.3,\; k=8$]{\includegraphics[width=0.245\linewidth]{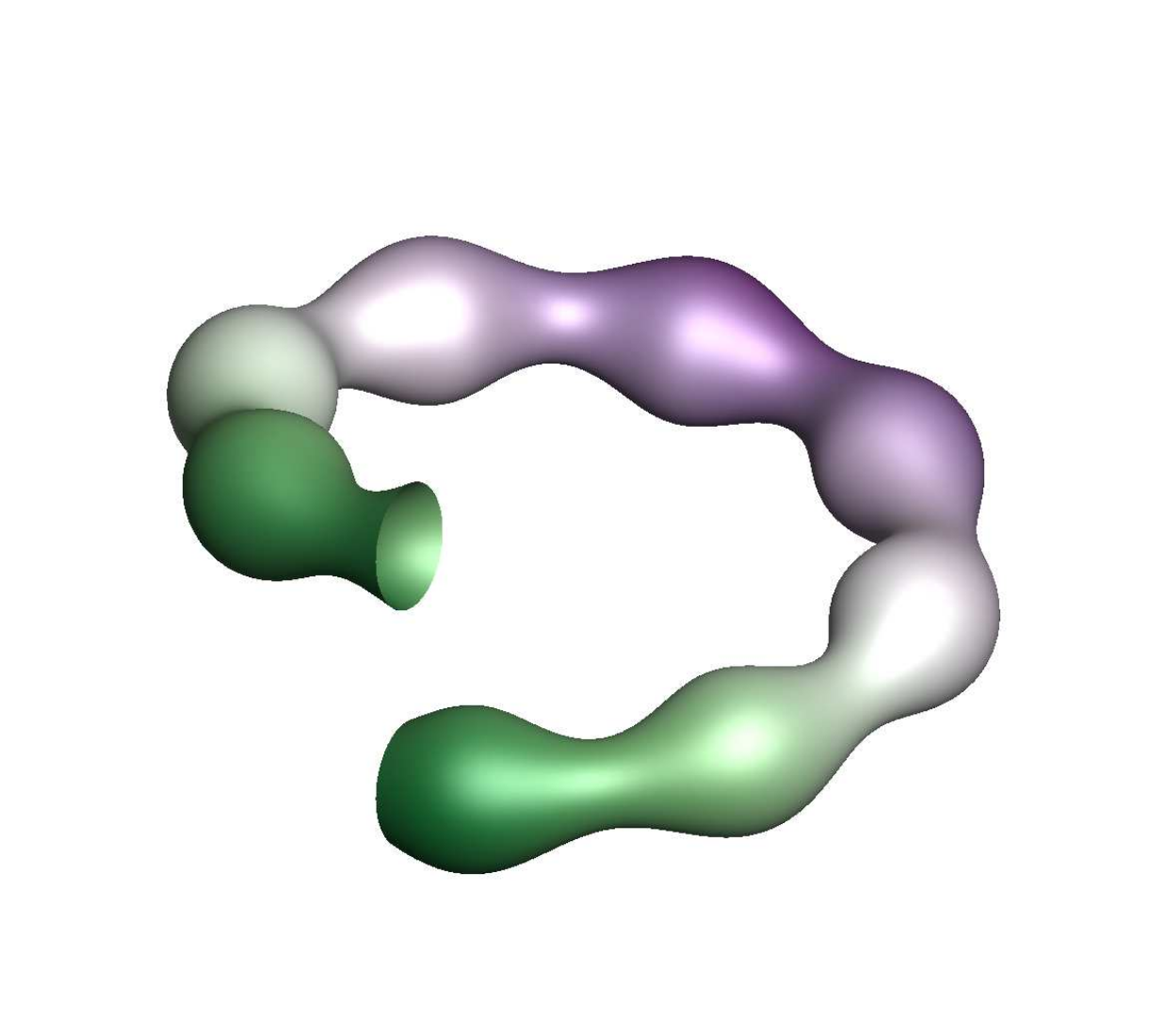}} \hspace*{-5pt}
    \subfigure[$A=0.2,\; k=10$]{\includegraphics[width=0.245\linewidth]{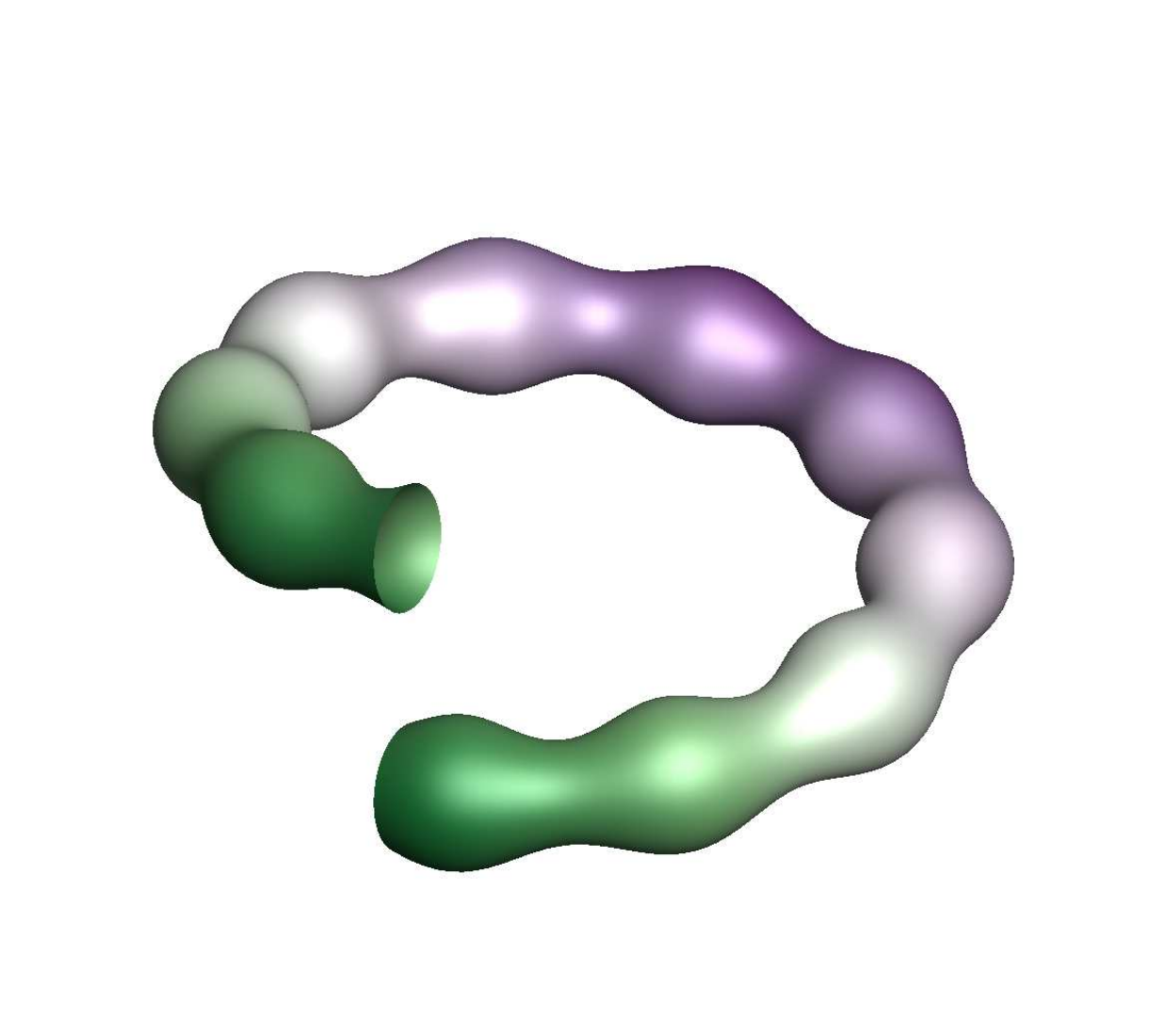}} \hspace*{-5pt}
    \subfigure[$K=10, A_n=\frac{1}{12n}$]{\includegraphics[width=0.245\linewidth]{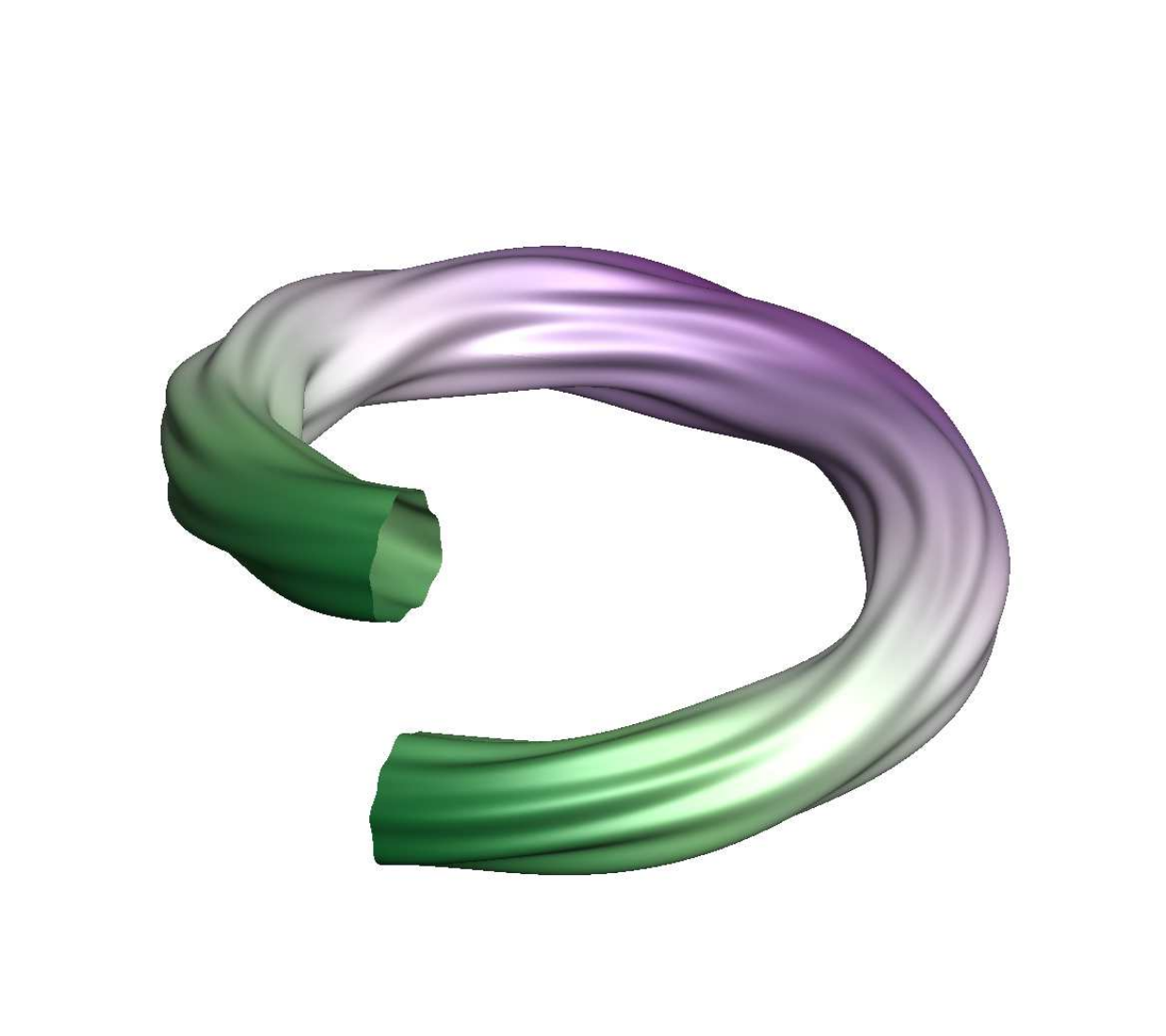}} \hspace*{-5pt}
    \subfigure[$K=10, A_n=\frac{1}{16n}$]{\includegraphics[width=0.245\linewidth]{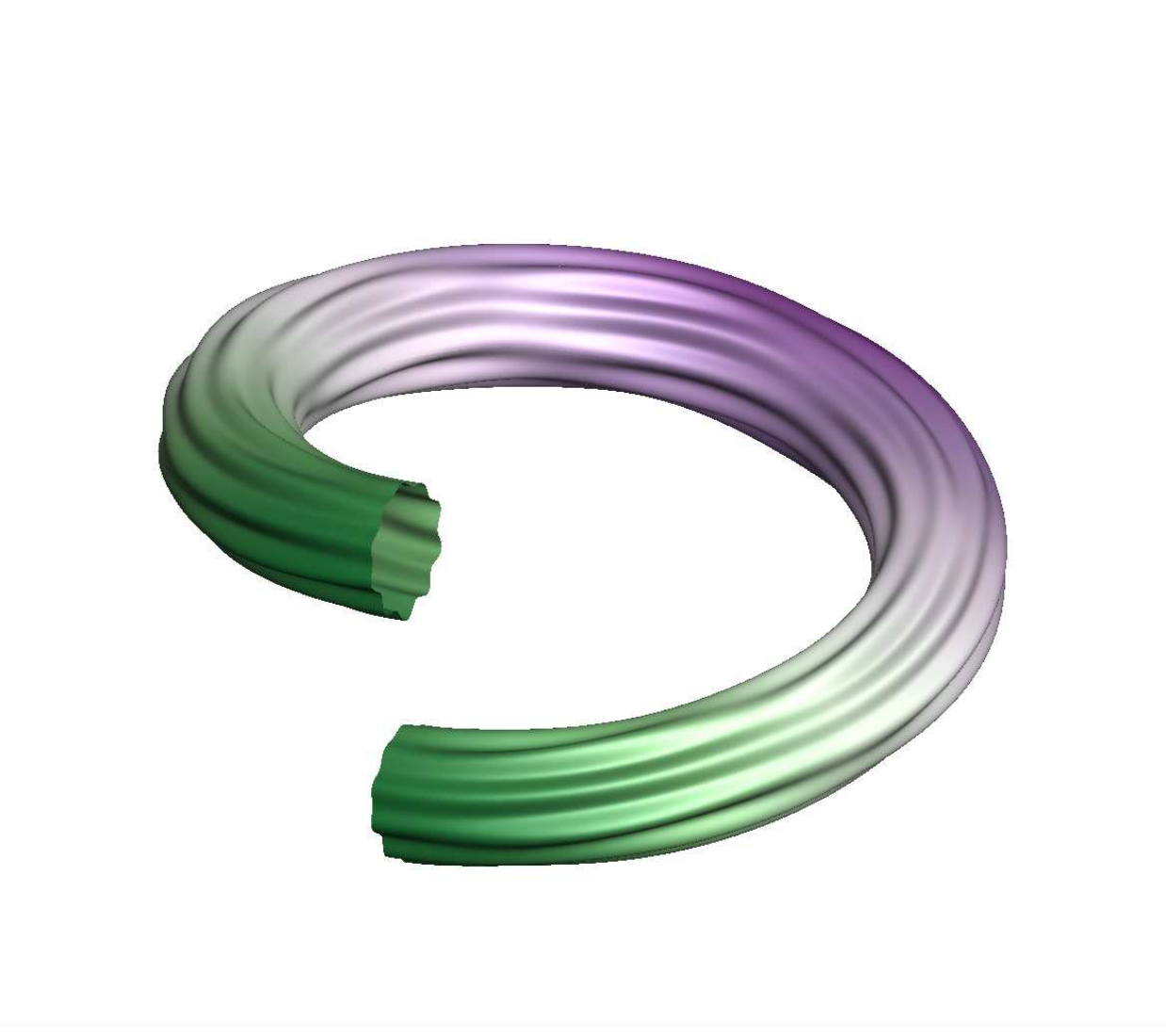}}
	\vspace*{-10pt}
    \caption{Cases (e) and (f) in Table \ref{tab:curve-plane}: Helical pipe geometries with sine-modulated surface (a)-(b) and pseudo-rand surface (c)-(d).}
	\label{helicalpipe2}
\end{figure}

\begin{remark} It is evident that the toroidal–poloidal coordinate system \eqref{torus-coor} can be seen as a degenerate case of the general helical coordinate system \eqref{coodinatesnew} with $b=0$, under the assumption $\omega\in[0,2\pi)$.
Therefore, in the following, we shall not treat it separately. Instead, our analysis in the following subsection will be carried out directly for the general helical setting.
\end{remark}

\subsection{Compact difference method for Poisson-type equations on the surface of helical pipe geometries}
As a prototype problem, in this subsection, we consider the compact difference scheme for the Poisson-type equation: 
\begin{align*}%\label{HpSS}
\begin{cases}
    -\Delta U(\bs{x})+\Lambda(\bs{x}) U(\bs{x})=F(\bs{x}),& \text{in}\;\; \mathcal{D},\\
    U(\bs{x})=0, & \text{on}\;\; \partial\mathcal{D}.
\end{cases}
\end{align*}
Here we used the notation $\mathcal D$ to denote the physical space, i.e., $\mathcal D := \{\bs x \in \mathbb R^3: \bs x = \bs x(\bs \xi),\;\; \bs \xi \in \Omega \},$ where
\begin{equation*}
    \Omega := \{\bs \xi \in \mathbb R^3: r=1,\; \theta\in [0, 2\pi), \omega \in (\omega_l, \omega_r)\}
\end{equation*}
denotes the parameter space.
Equivalently, by denoting $\lambda(\bs{\xi})=\Lambda(\bs{x}(\bs{\xi}))$ and $f(\bs{\xi})=F(\bs{x}(\bs{\xi}))$, we get
\begin{align}\label{SSHP2}
\begin{cases} 
-\widetilde{\Delta}_{0} u+\lambda u = f,\;\; \text{in}\;\;  \Omega,\\
\text{$u=0$ at $\omega=\omega_l, \omega_r$ and $2\pi$-periodic in $\theta$ on $\partial \Omega$,}
\end{cases}
\end{align}
where the Laplacian operator $\widetilde{\Delta}_{0}$ is given by \eqref{eq:lap-hel}.

In our scheme, the surface is discretized using uniform meshes in the parameter space, which correspond to non-uniform meshes in the physical space. 
Figure~\ref{fig:mesh-distributions} shows two typical mesh distributions on the pipe surface. From which, the meshes exhibit higher grid density in regions where the pipe surface has concave or more complex geometry, while other areas are relatively coarser.

\begin{figure}[!t]
    \centering
    \includegraphics[width=0.45\linewidth]{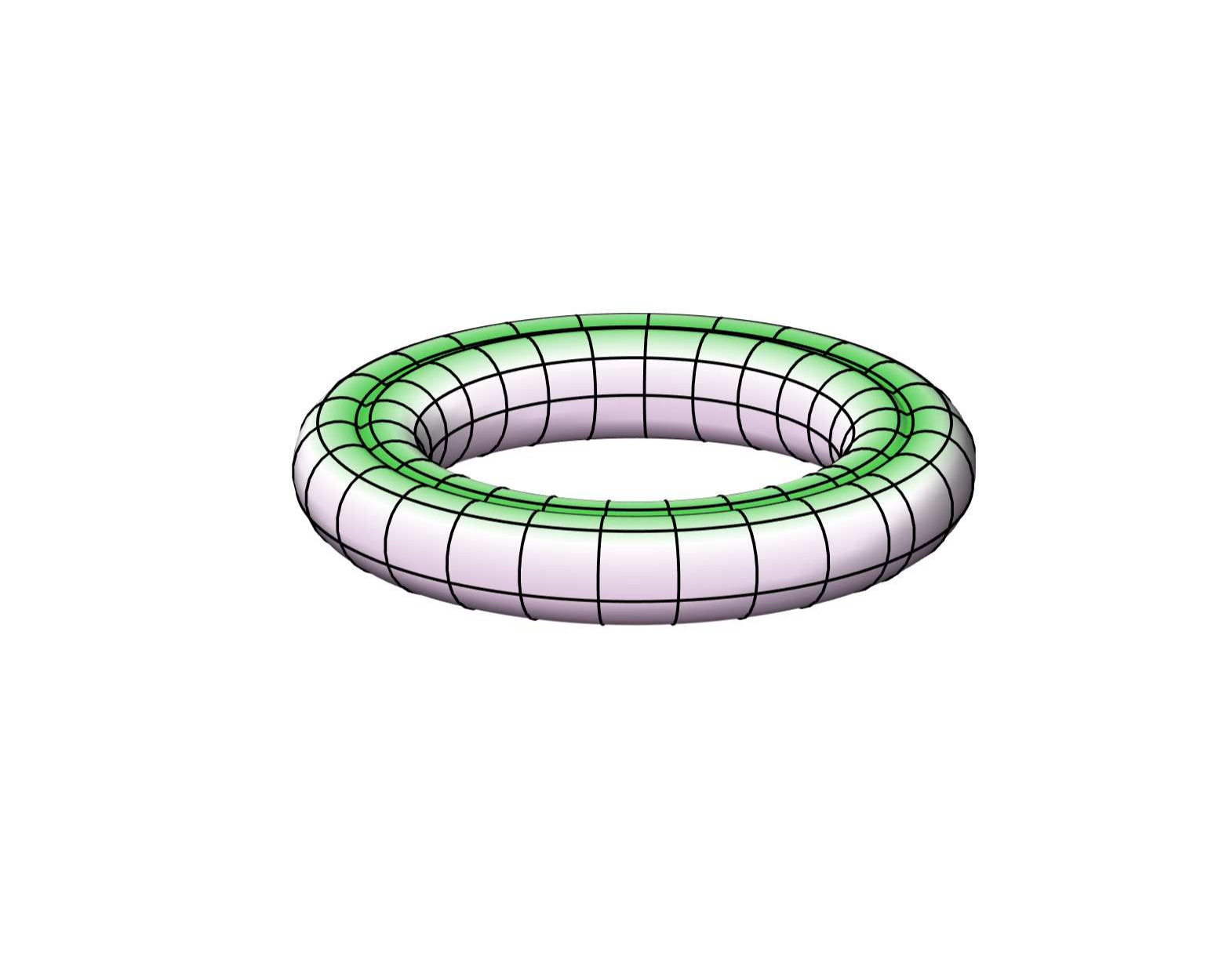}\hspace{-0.5cm}
    \includegraphics[width=0.45\linewidth]{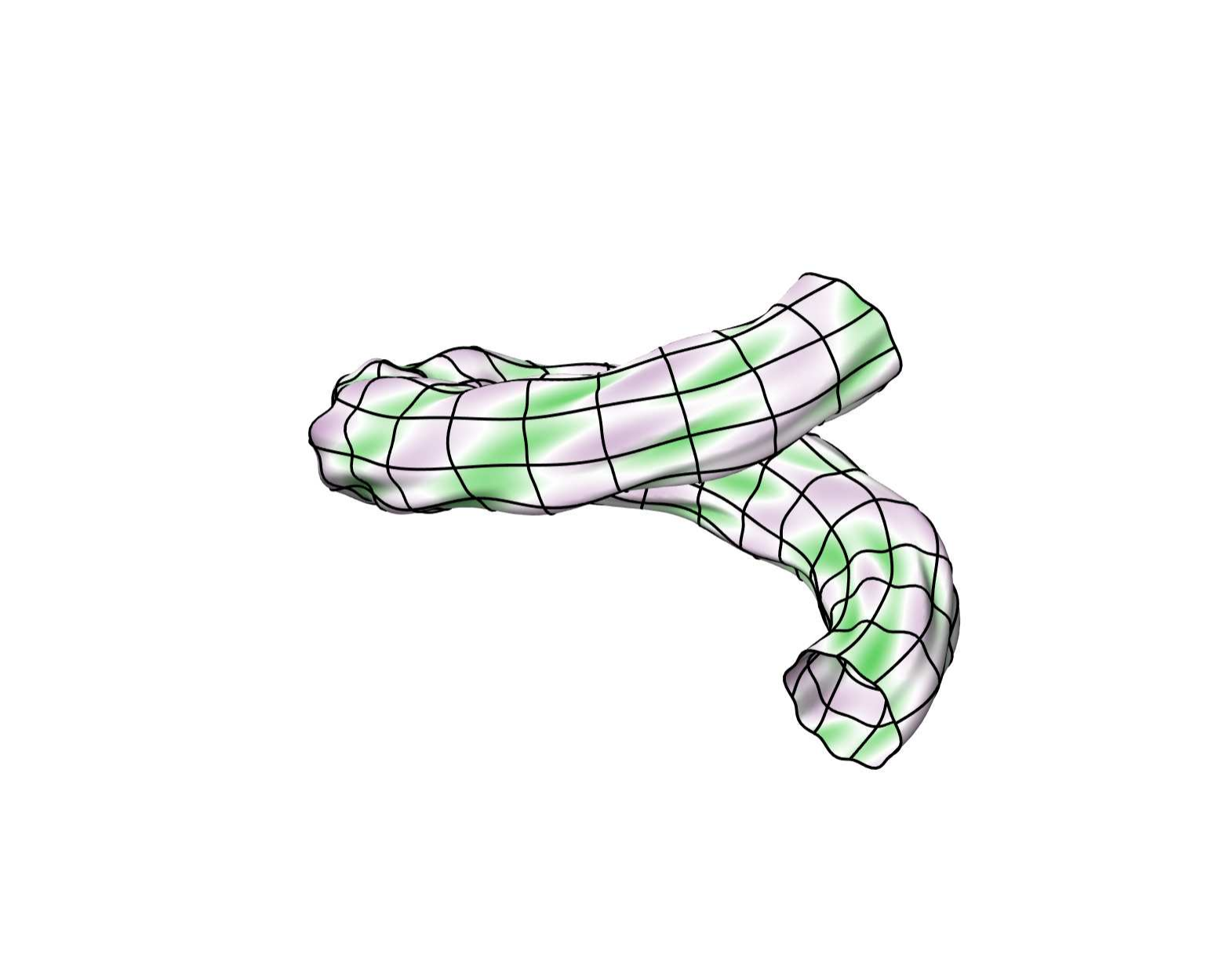}
    \vspace*{-20pt}
    \caption{Illustration of mesh discretizations on the torus (left) and helical pipe (right) surface.}
    \label{fig:mesh-distributions}
\end{figure}

Assume that problem~\eqref{SSHP2} admits a solution 
$u(\theta,\omega) \in C^{6}(\bar{\Omega})$. 
By applying the composite operator 
$\mathscr{D}_{\omega\theta}\mathscr{C}_{\theta\omega}\mathscr{B}_{\omega}\mathscr{A}_{\theta}$ 
on both sides of equation~\eqref{SSHP2}, and using Lemma~\ref{comppro2}, 
we derive the following fourth-order compact difference scheme:
\begin{align}\label{SSHcomschemewithh}
    &-\mathscr{D}_{\omega\theta}\mathscr{C}_{\theta\omega}\mathscr{B}_{\omega}\big(\delta_\theta (\hat{p} \delta_\theta u)_{ij}\big)
    - \mathscr{D}_{\omega\theta}\mathscr{C}_{\theta\omega}\mathscr{A}_{\theta}\big(\delta_\omega (\hat{q} \delta_\omega u)_{ij}\big) 
    + \beta \mathscr{D}_{\omega\theta}\mathscr{B}_{\omega}\mathscr{A}_{\theta}
    \Big(\nabla_{\!\omega}(\hat{q}_1 \nabla_{\!\theta} u)_{ij}
       + \frac{h_\theta^2}{3} \bar{q} \delta_\theta(q \delta_\theta u)_{ij}\Big)  \nn\\
    &\qquad + \beta \mathscr{C}_{\theta\omega}\mathscr{B}_{\omega}\mathscr{A}_{\theta}
    \Big(\nabla_{\!\theta}(\hat{q} \nabla_{\!\omega} u)_{ij}
       + \frac{h_\omega^2}{3} \bar{q} \delta_\omega(q \delta_\omega u)_{ij}\Big)
    + \mathscr{D}_{\omega\theta}\mathscr{C}_{\theta\omega}\mathscr{B}_{\omega}\mathscr{A}_{\theta}(R^2 \rho_0 \lambda u)_{ij}  \nn\\
    &= \mathscr{D}_{\omega\theta}\mathscr{C}_{\theta\omega}\mathscr{B}_{\omega}\mathscr{A}_{\theta} (R^2 \rho_0 f)_{ij} + O(h^4),
\end{align}
where $p=\rho_0+\frac{\beta^2 R^2}{\rho_0}$ and $q=\frac{R^2}{\rho_0}.$
Notably, to construct our compact difference scheme, we also need introduce a $O(h^4)$ term, that is,
\begin{align*}
    \mathcal{S}_h u_{ij} 
    &= -\frac{4}{9\rho_0}h_\theta^4\delta_\theta^2(\delta_\theta(\hat{p}^2(\delta_\theta\delta_\theta^2u)))_{ij} 
    -\frac{25}{36\rho_0}h_\omega^4\delta_\omega^2(\delta_\theta(\hat{p}^2(\delta_\theta\delta_\omega^2u)))_{ij} \\
    &\quad -\frac{4}{9\eta_0}h_\omega^4\delta_\omega^2(\delta_\omega(\hat{q}^2(\delta_\omega\delta_\omega^2u)))_{ij}
     -\frac{25}{36\eta_0}h_\theta^4 \delta_\theta^2(\delta_\omega(\hat{q}^2(\delta_\omega\delta_\theta^2u)))_{ij}\\
     &\quad -\frac{\beta^2}{4\rho_0} h_\theta^4\delta_\theta^2(\nabla_{\!\omega}(\hat{q}_1^2(\nabla_{\!\omega}\delta_\theta^2u)))_{ij} 
     -\frac{\beta^2}{4\rho_0}h_\omega^4\delta_\omega^2(\nabla_{\!\omega}(\hat{q}_1^2(\nabla_{\!\omega}\delta_\omega^2u)))_{ij}\\
    &\quad -\frac{\beta^2}{4\eta_0} h_\omega^4\delta_\omega^2(\nabla_{\!\theta}(\hat{q}^2(\nabla_{\!\theta}\delta_\omega^2u)))_{ij} 
     -\frac{\beta^2}{4\eta_0}h_\theta^4\delta_\theta^2(\nabla_{\!\theta}(\hat{q}^2(\nabla_{\!\theta}\delta_\theta^2u)))_{ij},
\end{align*}
where 
$$
\eta_0:=\min\Big\{\frac{\rho_0^- (R^-)^2}{\beta^2 (R^-)^2+(\rho_0^-)^2},\; \frac{\rho_0^+ (R^-)^2}{\beta^2 (R^-)^2+(\rho_0^+)^2}\Big\}
$$
with $\rho_0^- = \min_{(\theta,\omega) \in \bar \Omega} \rho_0$, $\rho_0^+ = \max_{\theta,\omega \in \bar \Omega} \rho_0$ and likewise for $R^{\pm}.$

In order to simplify the computation and error analysis, we expand the operators from \eqref{SSHcomschemewithh} and denote $\varpi=R^2 \rho_0 \lambda$ and
\begin{align*}
\mathcal{L}_h u_{ij}
&= - \delta_\theta(p \delta_\theta u)_{ij}
    -  \delta_\omega(q \delta_\omega u)_{ij}
    + \beta \nabla_{\!\omega}(q \nabla_{\!\theta} u)_{ij}
    + \beta \nabla_{\!\theta}(q \nabla_{\!\omega} u)_{ij}
    + (\varpi u)_{ij},\\
\mathcal{A}_h u_{ij}
&= \frac{h_\theta^2}{3} \delta_\theta(\tilde{p} \delta_\theta u)_{ij}
    + \frac{h_\omega^2}{3} \delta_\omega(\tilde{q} \delta_\omega u)_{ij}
    - \beta \frac{h_\theta^2}{3} \nabla_{\!\omega}(\tilde{q}_1 \nabla_{\!\theta} u)_{ij}
    - \beta \frac{h_\omega^2}{3} \nabla_{\!\theta}(\tilde{q} \nabla_{\!\omega} u)_{ij}\\
&\quad+ \frac{5 h_\theta^2}{12} \delta_\theta^2(\varpi u)_{ij}
    + \frac{5 h_\omega^2}{12} \delta_\omega^2(\varpi u)_{ij}
    + \beta \frac{h_\theta^2}{3} \bar{q} \delta_\theta(q \delta_\theta u)_{ij}
    + \beta \frac{h_\omega^2}{3} \bar{q} \delta_\omega(q \delta_\omega u)_{ij}\\
&\quad- \frac{h_\theta^2}{12} \nabla_{\!\theta}\Big(\frac{\partial_{\theta} p}{p} \varpi u\Big)_{ij}
    - \frac{h_\omega^2}{12} \nabla_{\!\omega}\Big(\frac{\partial_{\omega} q}{q} \varpi u\Big)_{ij}
    - \frac{h_\omega^2}{3} \frac{\partial_{\omega} q}{q} \nabla_{\!\omega}(\varpi u)_{ij}
    - \frac{h_\theta^2}{3} \frac{\partial_{\theta} q}{q} \nabla_{\!\theta}(\varpi u)_{ij},\\
    \mathcal{B}_h u_{ij}
    &= \frac{h_\omega^2}{12} \nabla_{\!\omega}\Big(\frac{\partial_{\omega} q}{q} \delta_\theta(\hat{p} \delta_\theta u)_{ij}\Big)
    + \frac{h_\theta^2}{3} \frac{\partial_{\theta} q}{q} \nabla_{\!\theta}(\delta_\theta(\hat{p} \delta_\theta u))_{ij}
    + \frac{h_\omega^2}{3} \frac{\partial_{\omega} q}{q} \nabla_{\!\omega}(\delta_\theta(\hat{p} \delta_\theta u))_{ij}\\
&\quad+ \frac{h_\theta^2}{12} \nabla_{\!\theta}\Big(\frac{\partial_{\theta} p}{p} \delta_\omega(\hat{q} \delta_\omega u)_{ij}\Big)
    + \frac{h_\omega^2}{3} \frac{\partial_{\omega} p}{p} \nabla_{\!\omega}(\delta_\omega(\hat{q} \delta_\omega u))_{ij}
    + \frac{h_\theta^2}{3} \frac{\partial_{\theta} p}{p} \nabla_{\!\theta}(\delta_\omega(\hat{q} \delta_\omega u))_{ij}\\
&\quad- \beta \frac{h_\theta^2}{12} \nabla_{\!\theta}\Big(\frac{\partial_{\theta} p}{p} \nabla_{\!\omega}(\hat{q}_1 \nabla_{\!\theta} u)\Big)_{ij}
    - \beta \frac{h_\omega^2}{12} \nabla_{\!\omega}\Big(\frac{\partial_{\omega} q}{q} \nabla_{\!\omega}(\hat{q}_1 \nabla_{\!\theta} u)\Big)_{ij}\\
&\quad- \beta \frac{h_\omega^2}{3} \frac{\partial_{\omega} q}{q} \nabla_{\!\omega}(\nabla_{\!\omega}(\hat{q}_1 \nabla_{\!\theta} u))_{ij}
    - \beta \frac{h_\omega^2}{12} \nabla_{\!\omega}\Big(\frac{\partial_{\omega} p}{p} \nabla_{\!\theta}(\hat{q} \nabla_{\!\omega} u)\Big)_{ij}\\
&\quad- \beta \frac{h_\theta^2}{12} \nabla_{\!\theta}\Big(\frac{\partial_{\theta} q}{q} \nabla_{\!\theta}(\hat{q} \nabla_{\!\omega} u)\Big)_{ij}
    - \beta \frac{h_\theta^2}{3} \frac{\partial_{\theta} q}{q} \nabla_{\!\theta}(\nabla_{\!\theta}(\hat{q} \nabla_{\!\omega} u))_{ij},
\end{align*}
and
\begin{align*}
  \mathcal{C}_h u_{ij}
  &=  \frac{h_\theta^2}{3} \delta_\theta^2(\delta_\theta(\hat{p} \delta_\theta u))_{ij}
      + \frac{5 h_\omega^2}{12} \delta_\omega^2(\delta_\theta(\hat{p} \delta_\theta u))_{ij}
      + \frac{h_\omega^2}{3} \delta_\omega^2(\delta_\omega(\hat{q} \delta_\omega u))_{ij}\\
  &\quad + \frac{5 h_\theta^2}{12} \delta_\theta^2(\delta_\omega(\hat{q} \delta_\omega u))_{ij}
      - \beta \frac{h_\theta^2}{4} \delta_\theta^2(\nabla_{\!\omega}(\hat{q}_1 \nabla_{\!\theta} u))_{ij}
      - \beta \frac{h_\omega^2}{4} \delta_\omega^2(\nabla_{\!\omega}(\hat{q}_1 \nabla_{\!\theta} u))_{ij}\\
  &\quad - \beta \frac{h_\omega^2}{4} \delta_\omega^2(\nabla_{\!\theta}(\hat{q} \nabla_{\!\omega} u))_{ij}
      - \beta \frac{h_\theta^2}{4} \delta_\theta^2(\nabla_{\!\theta}(\hat{q} \nabla_{\!\omega} u))_{ij}.
\end{align*}
Then we can represent the compact difference scheme \eqref{SSHcomschemewithh} as
\begin{align*}
  (\mathcal{L}_h + \mathcal{A}_h + \mathcal{B}_h
   + \mathcal{S}_h - \mathcal{C}_h) u_{ij}
   = g_{ij}+O(h^4),
\end{align*}
where $g_{ij}=\mathscr{D}_{\omega\theta}\mathscr{C}_{\theta\omega}\mathscr{B}_{\omega}\mathscr{A}_{\theta} (R^2 \rho_0 f)_{ij}$.

This finally gives our numerical scheme by dropping the last $O(h^4)$ term:
\begin{align}\label{coschsshf2}
  (\mathcal{L}_h + \mathcal{A}_h + \mathcal{B}_h
   + \mathcal{S}_h - \mathcal{C}_h) u_{ij}
   = g_{ij},
\end{align}
We announce that the above compact difference scheme is uniquely solvable, stable and convergent.
More precisely, we have the following results.
\begin{theorem}\label{thm:conv-hel}
    Let $u_h:=\{u_{i,j}\}_{(i,j) \in \mathfrak{J}_h}\in \mathcal{W}_h$ be the solution of \eqref{coschsshf2}. Then we have the scheme \eqref{coschsshf2} is uniquely solvable for $h<\min\{1, \frac{1}{8C_1}\}$, where $C_1$ is positive constant independent of $h_\theta$ and $h_\omega$. 
    Moreover, under this mesh condition, the numerical solution $u_h$ is stable in the sense
    \begin{equation*}
        \|u_h\|_{h,1} \le C \|g\|.
    \end{equation*}
    Furthermore, assume the solution $u$ of \eqref{SSHP2} satisfies $u \in C^6(\bar\Omega)$. Then for $h<\min\{1, \frac{1}{8C_1}\}$,  we have
    \begin{equation*}
        \|u-u_h\|_{h,1} \le C h^4,
    \end{equation*}
    where $C$ is positive constant independent of $h_\theta$ and $h_\omega$.
\end{theorem}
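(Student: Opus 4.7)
The plan is to prove the three assertions in the order they are listed—unique solvability, stability, convergence—by first establishing a coercivity estimate for the full discrete operator $\mathcal{L}_h+\mathcal{A}_h+\mathcal{B}_h+\mathcal{S}_h-\mathcal{C}_h$ in the discrete $H^1$-norm, then deducing solvability and stability as immediate corollaries, and finally obtaining the error estimate via the truncation analysis built into the compact operator lemmas already established.

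For the coercivity of the principal piece $\mathcal{L}_h$, I would pair $\mathcal{L}_h u$ with $u$ and apply the discrete integration by parts identities in Lemma \ref{secondordlemma}. The principal terms become
\begin{equation*}
(p\,\delta_\theta u,\delta_\theta u)_\theta+(q\,\delta_\omega u,\delta_\omega u)_\omega-2\beta\,(q\,\nabla_{\!\theta} u,\nabla_{\!\omega} u)+(\varpi u,u).
\end{equation*}
The key algebraic observation is the continuous ellipticity identity $p\xi^2-2\beta q\,\xi\eta+q\eta^2=\rho_0\xi^2+q(\beta\xi-\eta)^2$, which follows from $p=\rho_0+\beta^2 q$; this is exactly why the coordinate system was set up with this specific $p$. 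The main obstacle at this step is that the staggered differences $\delta_\theta,\delta_\omega$ used in the second-order pieces do not coincide with the centered differences $\nabla_{\!\theta},\nabla_{\!\omega}$ used in the mixed-derivative discretization, so the algebraic identity does not transfer verbatim. I plan to express $\nabla_{\!\theta} u$ as the average of two adjacent $\delta_\theta$'s, apply Cauchy–Schwarz with a small parameter $\varepsilon$ to split $|2\beta(q\nabla_{\!\theta} u,\nabla_{\!\omega} u)|\le \varepsilon\beta^2(q\nabla_{\!\theta} u,\nabla_{\!\theta} u)+\varepsilon^{-1}(q\nabla_{\!\omega} u,\nabla_{\!\omega} u)$, and then absorb each piece into $(p\,\delta_\theta u,\delta_\theta u)_\theta$ and $(q\,\delta_\omega u,\delta_\omega u)_\omega$ using the strict positivity of $\rho_0$. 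This yields a lower bound of the form $(\mathcal{L}_h u,u)\ge c_0\|u\|_{h,1}^2$ for some $c_0>0$ independent of $h$.

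For the lower-order compact correctors $\mathcal{A}_h$, $\mathcal{B}_h$ and $-\mathcal{C}_h$, every term carries an explicit prefactor $h_\theta^2$ or $h_\omega^2$ multiplying difference operators whose combined order is at most two. Pairing with $u$, using discrete integration by parts to move half of the differences onto the test function, and invoking the inverse inequalities of Lemma \ref{inverinequLem} yields a bound of the form $|(\mathcal{A}_h u+\mathcal{B}_h u-\mathcal{C}_h u,u)|\le C_1 h^2\|u\|_{h,1}^2$. Choosing $h<\min\{1,1/(8C_1)\}$ allows this to be absorbed into the $c_0$ in the coercivity bound. The stabilizer $\mathcal{S}_h$ is constructed precisely as a sum of squares: after three rounds of discrete integration by parts, each of its eight summands becomes a nonnegative quadratic in high-order differences of $u$ weighted by $\hat{p}^2/\rho_0$ or $\hat{q}_1^2/\eta_0$, so $(\mathcal{S}_h u,u)\ge 0$ and contributes favorably rather than requiring further absorption. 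Combining these estimates gives the master coercivity bound $((\mathcal{L}_h+\mathcal{A}_h+\mathcal{B}_h+\mathcal{S}_h-\mathcal{C}_h)u,u)\ge c\|u\|_{h,1}^2$ under the stated mesh condition.

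Unique solvability is then immediate: if $g=0$, the estimate forces $u_h=0$, and since the system is finite-dimensional and square, this yields invertibility. Stability $\|u_h\|_{h,1}\le C\|g\|$ follows by applying Cauchy–Schwarz to $(g,u_h)$ against the lower bound $c\|u_h\|_{h,1}^2$ and a discrete Poincaré inequality in $\omega$ (which holds because $u_h$ vanishes at $\omega=\omega_l,\omega_r$). For the convergence result, I would substitute the exact solution $u\in C^6(\bar\Omega)$ into the scheme: by Lemma \ref{Torucomppro1}, Lemma \ref{Torucomppro2}, Lemma \ref{comppro1} and Lemma \ref{comppro2}, each of the four compact operators $\mathscr{A}_\theta,\mathscr{B}_\omega,\mathscr{C}_{\theta\omega},\mathscr{D}_{\omega\theta}$ approximates its target differential operator to $O(h^4)$ and the four operators commute to $O(h^4)$; thus the truncation residual of \eqref{coschsshf2} is an $O(h^4)$ grid function in the discrete $L^2$-norm. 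Applying the stability estimate to the error equation $(\mathcal{L}_h+\mathcal{A}_h+\mathcal{B}_h+\mathcal{S}_h-\mathcal{C}_h)(u-u_h)=O(h^4)$ concludes $\|u-u_h\|_{h,1}\le Ch^4$. The hardest single step will be the absorption argument for the mixed term $-2\beta(q\nabla_{\!\theta}u,\nabla_{\!\omega}u)$, since the $\nabla/\delta$ mismatch means one cannot simply invoke the pointwise identity $\rho_0\xi^2+q(\beta\xi-\eta)^2\ge 0$ and must carefully track constants so that the absorption into $\rho_0^-$ remains uniform in $h$.
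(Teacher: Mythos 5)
Your overall architecture (coercivity of $\mathcal{L}_h$ via completing the square in the cross term, absorption of the correctors for small $h$, then solvability, stability, and convergence as corollaries) matches the paper's, and your treatment of $\mathcal{L}_h$ and $\mathcal{A}_h$, the solvability/stability deductions, and the truncation-error argument are sound. However, there is a genuine gap in your handling of $\mathcal{C}_h$. You assert that every term of $\mathcal{C}_h$ is an $h^2$ prefactor times a difference operator ``whose combined order is at most two,'' and conclude $|(\mathcal{C}_h u,u)|\le C_1h^2\|u\|_{h,1}^2$. This is false: the terms of $\mathcal{C}_h$ have the form $h_\theta^2\,\delta_\theta^2\bigl(\delta_\theta(\hat p\,\delta_\theta u)\bigr)$ and the like, i.e.\ $h^2$ times a \emph{fourth}-order difference. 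After discrete integration by parts one is left with quantities such as $h_\theta^2(\hat p\,\delta_\theta\delta_\theta^2u,\delta_\theta u)_\theta$, and the inverse inequalities of Lemma \ref{inverinequLem} give only $\|\delta_\theta\delta_\theta^2u\|\le Ch_\theta^{-2}\|\delta_\theta u\|_\theta$, so the pairing is $O(1)\|\delta_\theta u\|^2$, not $O(h^2)$. An $O(1)$ perturbation cannot be absorbed by shrinking $h$; this is precisely why the scheme carries the stabilizer $\mathcal{S}_h$. The paper's Lemma \ref{MainLemma} handles $\mathcal{C}_h$ and $\mathcal{S}_h$ \emph{jointly}: Young's inequality applied to each term of $(\mathcal{C}_h u,u)$ produces, on one side, fractions ($\tfrac18$ or $\tfrac1{16}$) of the energy $\|\delta_\theta u\|_{\rho_0}^2+\|\delta_\omega u\|_\eta^2$ summing to exactly $\tfrac14$, and on the other side $h^4$-weighted third- and fourth-difference norms that are cancelled \emph{exactly} by $(\mathcal{S}_h u,u)$, yielding $((\mathcal{S}_h-\mathcal{C}_h)u,u)\ge-\tfrac14\bigl(\|\delta_\theta u\|_{\rho_0}^2+\|\delta_\omega u\|_\eta^2\bigr)$, which is then dominated by the coercivity constant $\tfrac12$ of $\mathcal{L}_h$. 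Your reading of $\mathcal{S}_h$ as merely a nonnegative bonus misses that its coefficients are reverse-engineered from this Young splitting; without that matching the coercivity cannot be recovered.

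A smaller inaccuracy of the same kind: $\mathcal{B}_h$ involves $h^2$ times third-order differences, so its pairing with $u$ is only $O(h)\|u\|_{h,1}^2$ (as in the paper's estimate \eqref{Bhuu}), not $O(h^2)$; this does not break the absorption, but it shows the order-counting needs care. These corrections aside, the rest of your plan goes through essentially as the paper does it.
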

\begin{proof}
    Due to the technicality and complicity of the proof, we sketch it in Appendix \ref{app:proof-hel}.
\end{proof}

\subsection{Numerical experiments for torus and helical pipe geometries}
\subsubsection{Numerical experiments for torus pipe geometries}\label{subsec:num-torus}
As our first example, we shall give some numerical results on the accuracy and convergence of the proposed scheme for the Poisson-type equation on a torus pipe geometries. 
For this purpose, we will simulate these results on different torus pipe geometries.
Set $\lambda = \sin\theta \sin\omega$ in \eqref{SSHP2} with periodic boundary conditions and choose suitable source function $f$ such that the exact solution is given by
\begin{equation}\label{SStest}
    u(\theta,\omega) = \sin(2\theta) \cos(2\omega).
\end{equation}
The numerical error is measured by using the discrete $H_h^1$-norm, i.e.,
$$
E_{N,M} = \|u_{h} - u\|_{h,1}.
$$
In our simulation, we consider the torus pipe with six different cross-sectional functions listed in Table \ref{tab:curve-plane}. Particularly, we take $a=2$ for the centerline \eqref{comeg-2d}, $A=0.3$ and $k=8$ for the case (e), and $K=10, A=\frac{1}{12n}$ for the case (f).

We tabulate the $H_h^1$-norm errors in Table \ref{tab:torus-err}.
Obviously, the convergence rate is approximately $O(h^4)$ aligned with our theoretical analysis in previous subsection.
We also depict the profiles of the numerical solutions on the surface of these pipe geometries in Figure \ref{ToSSMcou}.

Finally, we consider pipes with other closed curves as the centerline, as shown in Figure \ref{fig:Openline}  (cases (f)-(h)), with a circular cross-section. 
By taking the exact solution as \eqref{SStest}, one can also obtain the accuracy of order $O(h^4)$ in the discrete $H_h^1$-norm (not shown here).
We draw the numerical solutions on the surfaces of these pipes in Figure \ref{NewPipeCou}.

\begin{table}[!h]
    \centering
    \caption{$H_h^1$-norm errors of the proposed scheme for the torus pipe with six different cross-sectional functions listed in Table \ref{tab:curve-plane}.}
    \label{tab:torus-err}
    \renewcommand{\arraystretch}{1.3}
    \setlength{\tabcolsep}{5pt}
    \vspace*{-10pt}
    \begin{tabular}{|c|c|c|c|c|c|c|c|c|c|c|c|c|c|c|}
        \hline
        \multirow{2}{*}{$h$} & \multicolumn{2}{c|}{(a)} & \multicolumn{2}{c|}{(b)} & \multicolumn{2}{c|}{(c)} & \multicolumn{2}{c|}{(d)} & \multicolumn{2}{c|}{(e)} & \multicolumn{2}{c|}{(f)} \\
        \cline{2-13}
                             & $E_{N,M}$ & Rate & $E_{N,M}$ & Rate & $E_{N,M}$ & Rate & $E_{N,M}$ & Rate & $E_{N,M}$ & Rate & $E_{N,M}$ & Rate \\
        \hline
        $\frac{1}{60}$   & 7.8e-5 & –    & 8.0e-5 & –    & 1.5e-4 & –    & 1.3e-4 & –    & 1.0e-2 & –    & 1.0e-3 & –    \\
        \hline
        $\frac{1}{70}$   & 4.2e-5 & 4.00 & 4.3e-5 & 4.00 & 8.0e-5 & 4.03 & 6.9e-5 & 4.02 & 5.8e-3 & 3.87 & 5.7e-4 & 4.01 \\
        \hline
        $\frac{1}{80}$   & 2.5e-5 & 3.99 & 2.5e-5 & 3.98 & 4.7e-5 & 4.00 & 4.0e-5 & 4.00 & 3.5e-3 & 3.79 & 3.3e-4 & 3.98 \\
        \hline
        $\frac{1}{90}$   & 1.5e-5 & 4.01 & 1.6e-5 & 4.01 & 2.9e-5 & 4.01 & 2.5e-5 & 4.01 & 2.2e-3 & 3.90 & 2.1e-4 & 4.00 \\
        \hline
        $\frac{1}{100}$  & 1.0e-5 & 4.02 & 1.0e-5 & 4.02 & 1.9e-5 & 4.03 & 1.6e-5 & 4.03 & 1.4e-3 & 3.91 & 1.4e-4 & 4.01 \\
        \hline
    \end{tabular}
\end{table}

\begin{figure}[ht!]
    \centering
    \subfigure[]{\includegraphics[width=0.33\linewidth]{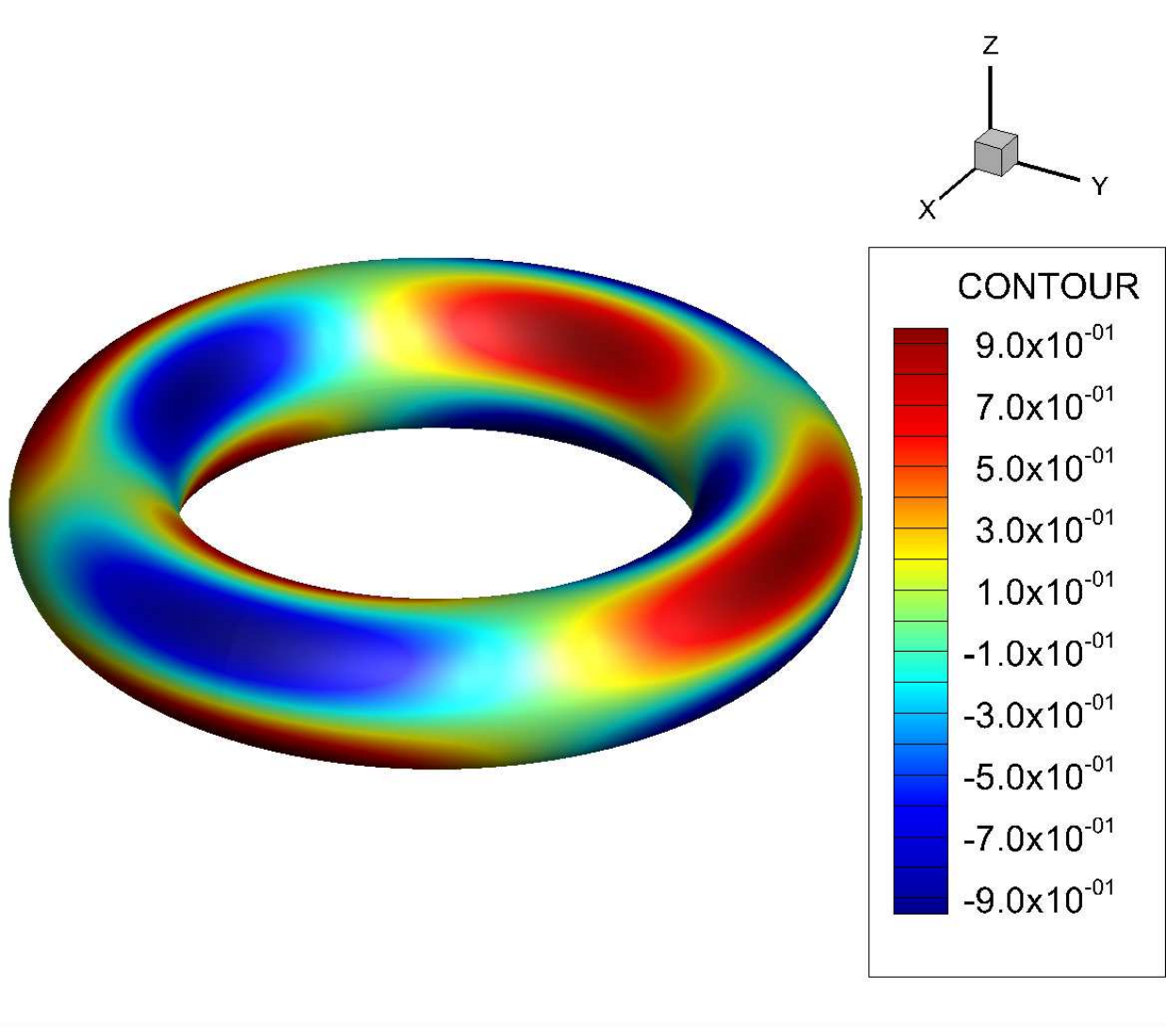}} \hspace*{-5pt}
    \subfigure[]{\includegraphics[width=0.33\linewidth]{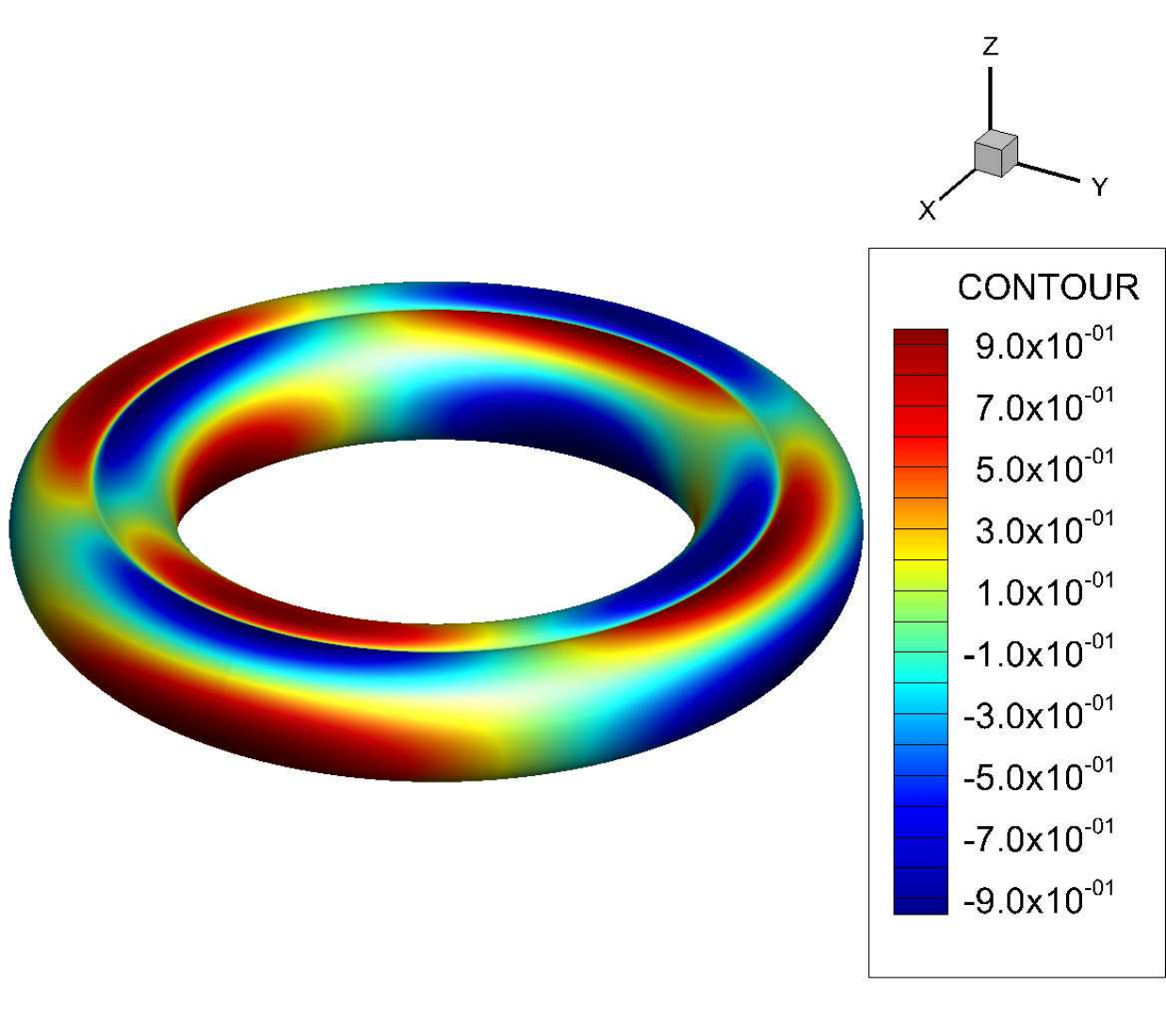}} \hspace*{-5pt}
    \subfigure[]{\includegraphics[width=0.33\linewidth]{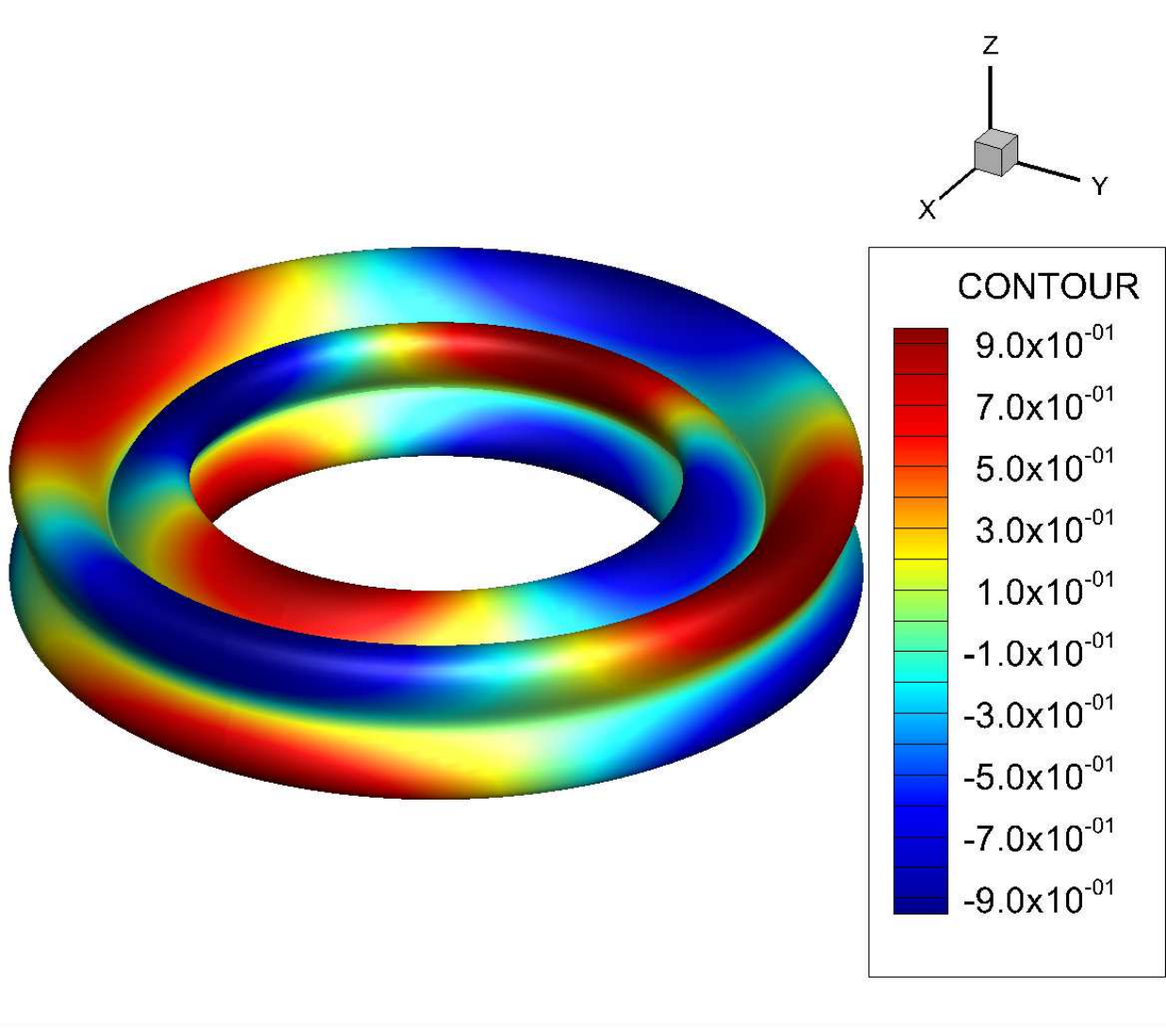}} \\
    \subfigure[]{\includegraphics[width=0.33\linewidth]{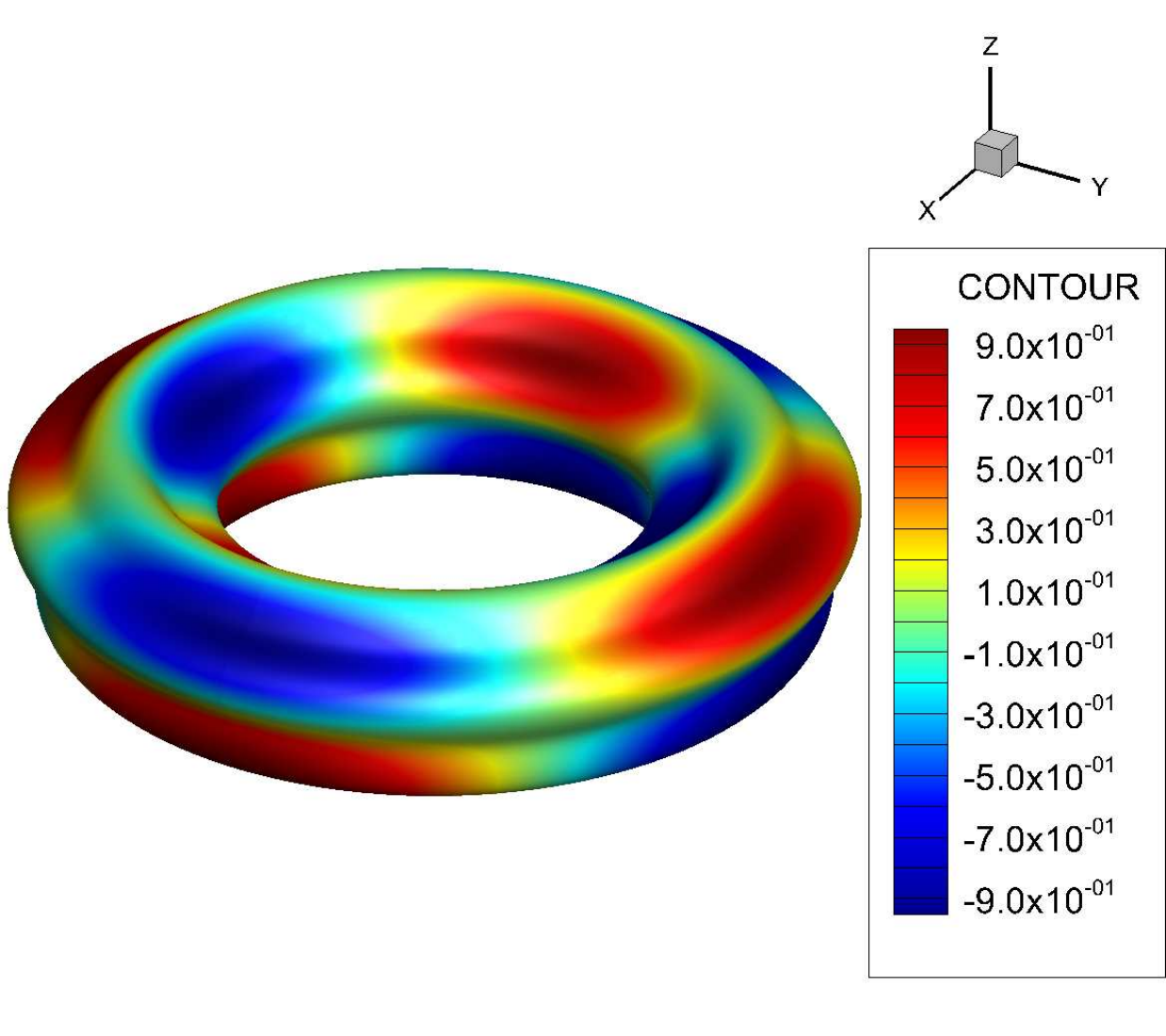}} \hspace*{-5pt}
    \subfigure[]{\includegraphics[width=0.33\linewidth]{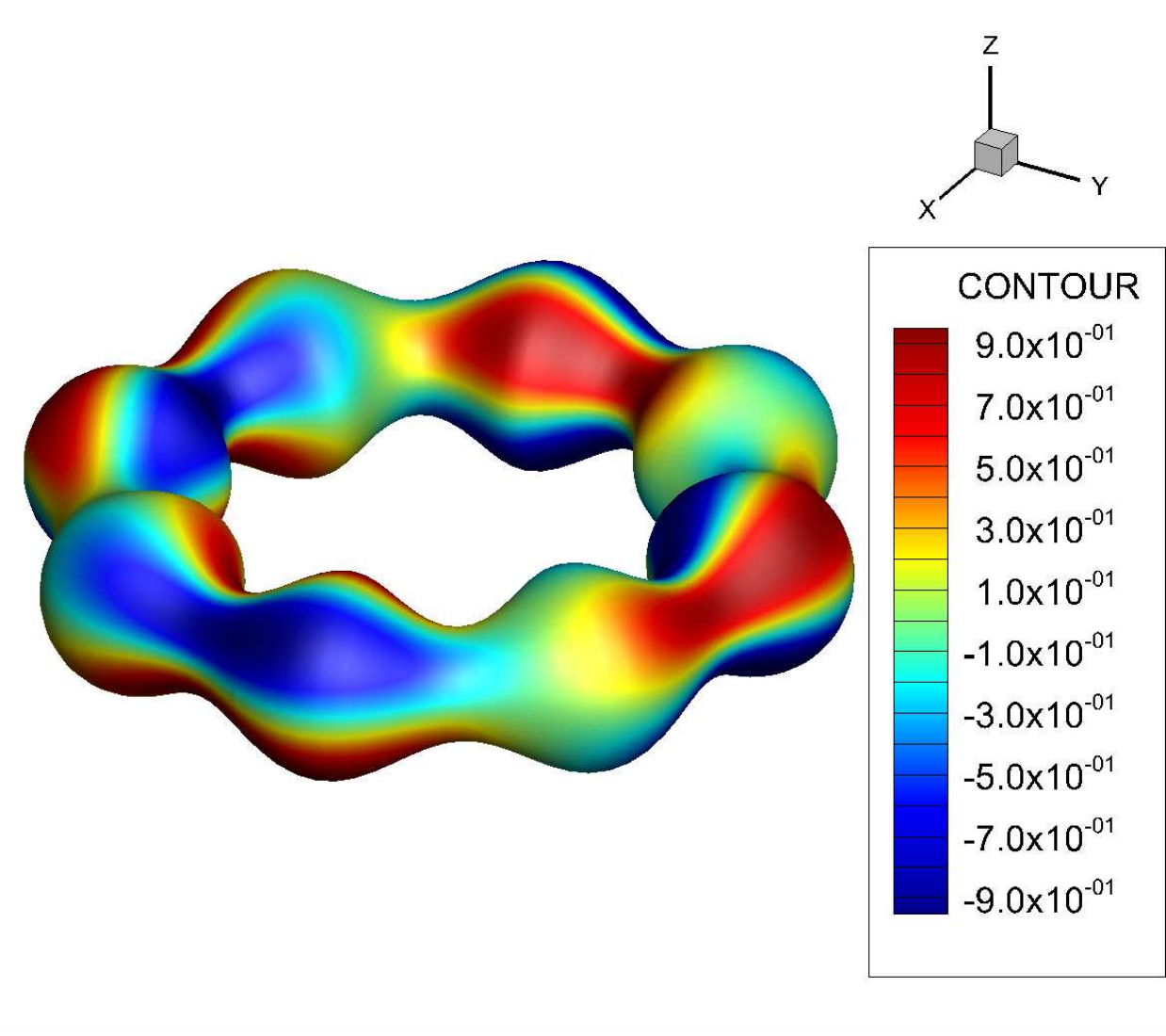}} \hspace*{-5pt}
    \subfigure[]{\includegraphics[width=0.33\linewidth]{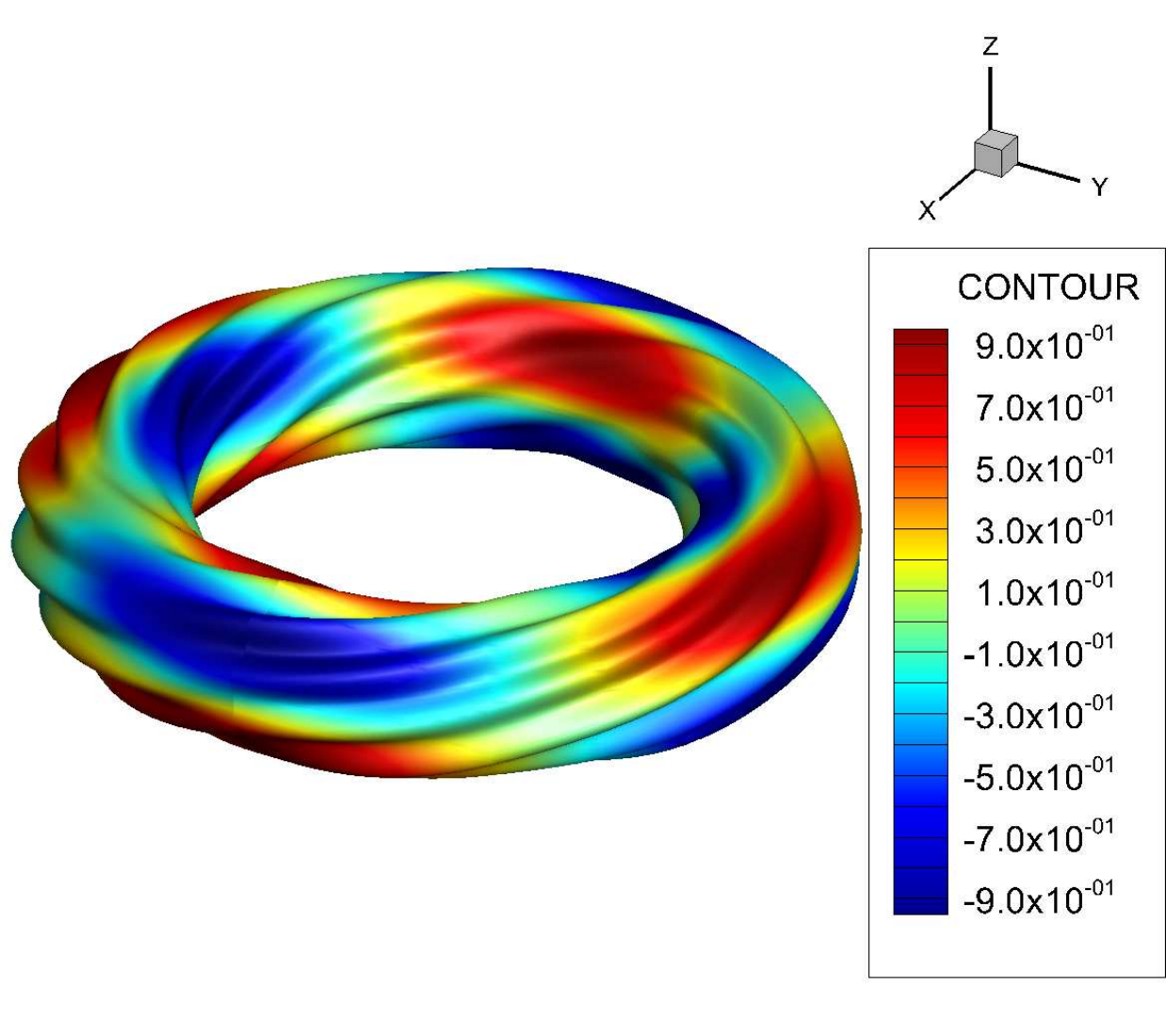}}
    \caption{Numerical solutions of the proposed scheme for the Poisson-type equation \eqref{SSHP2} on several torus pipe geometries with the cross-sectional functions listed in Table \ref{tab:curve-plane}.}
    \label{ToSSMcou}
\end{figure}

\begin{figure}[ht!]
    \centering
    \includegraphics[width=0.33\linewidth]{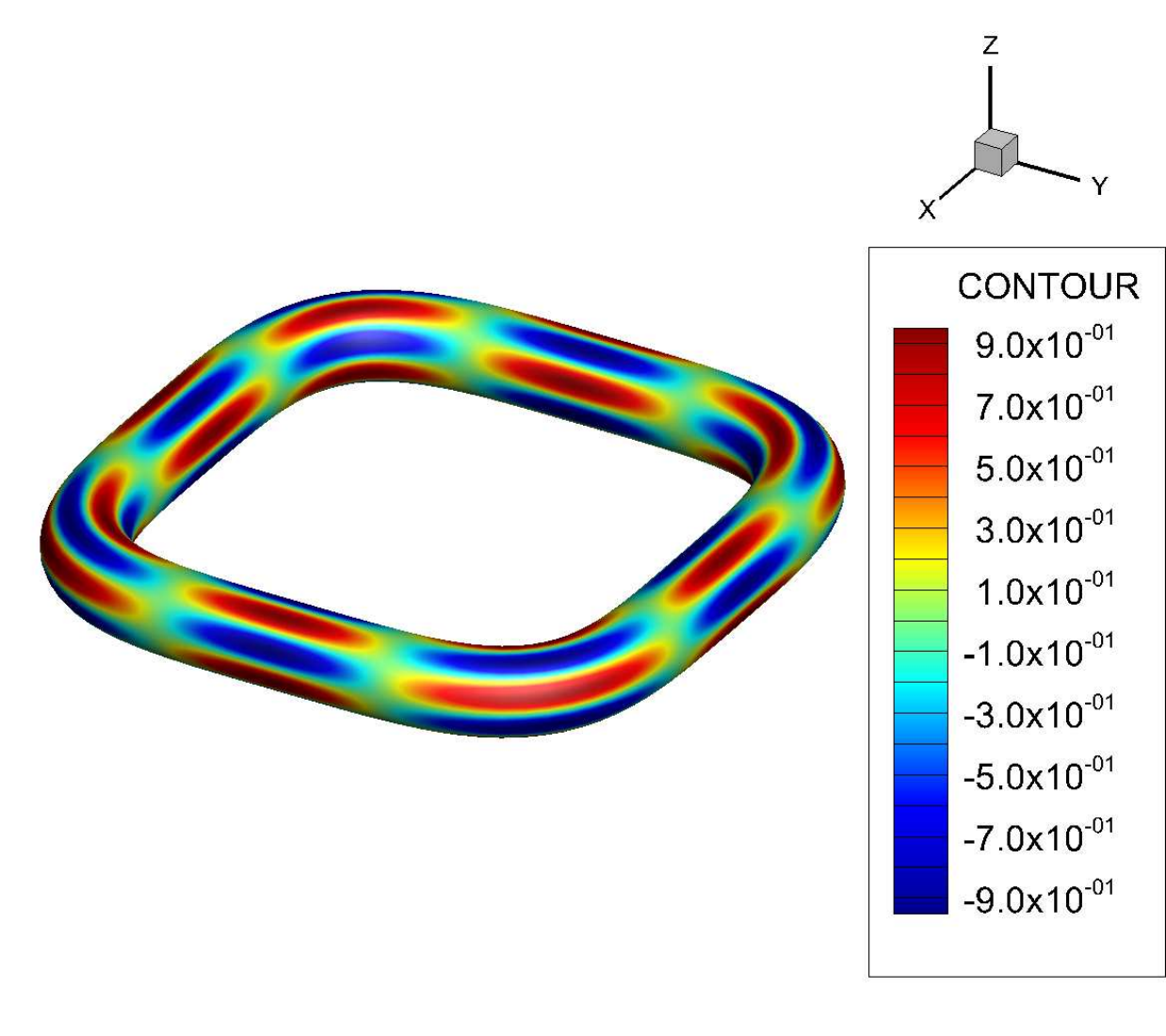} \hspace*{-5pt}
    \includegraphics[width=0.33\linewidth]{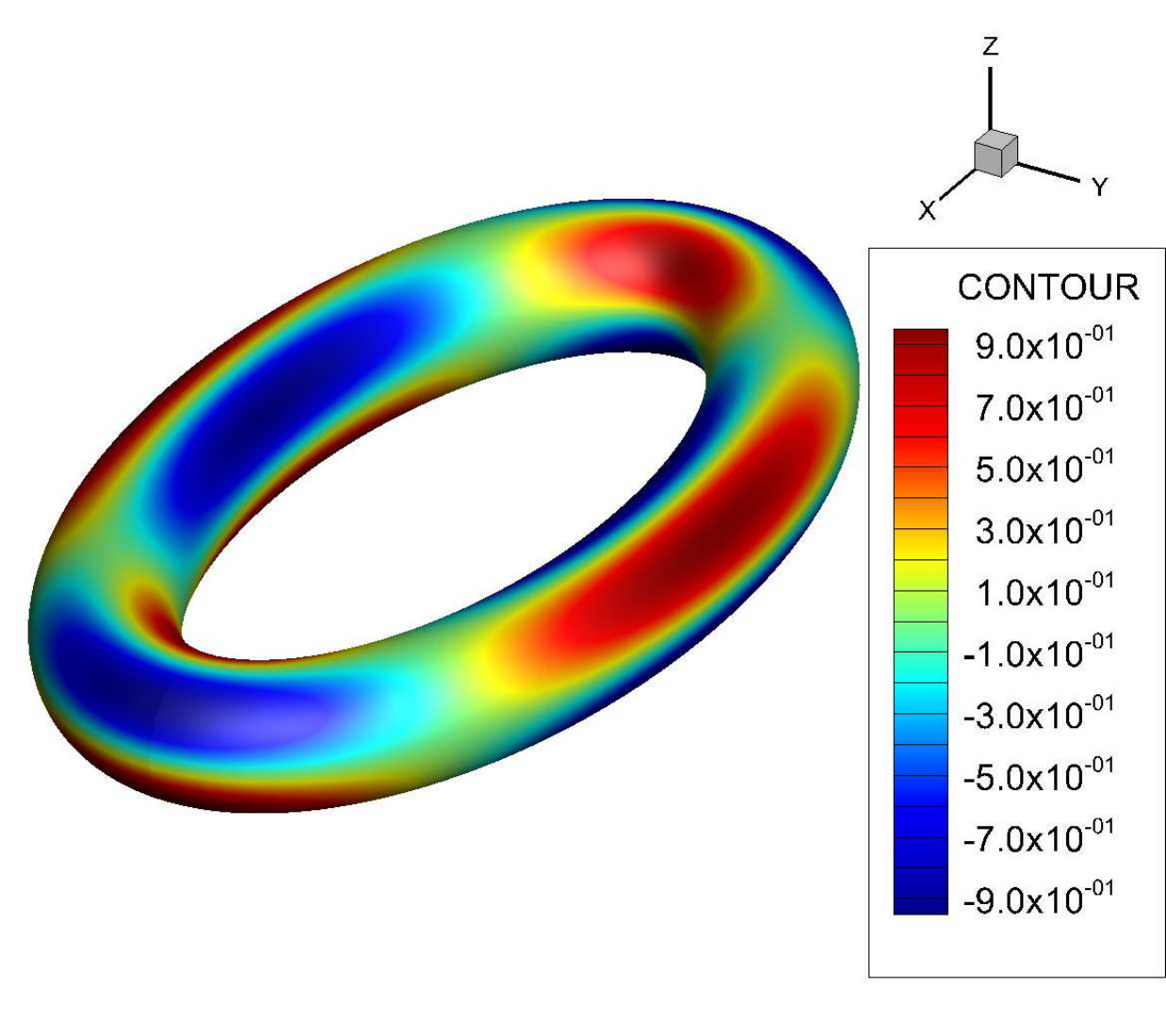} \hspace*{-5pt}
    \includegraphics[width=0.33\linewidth]{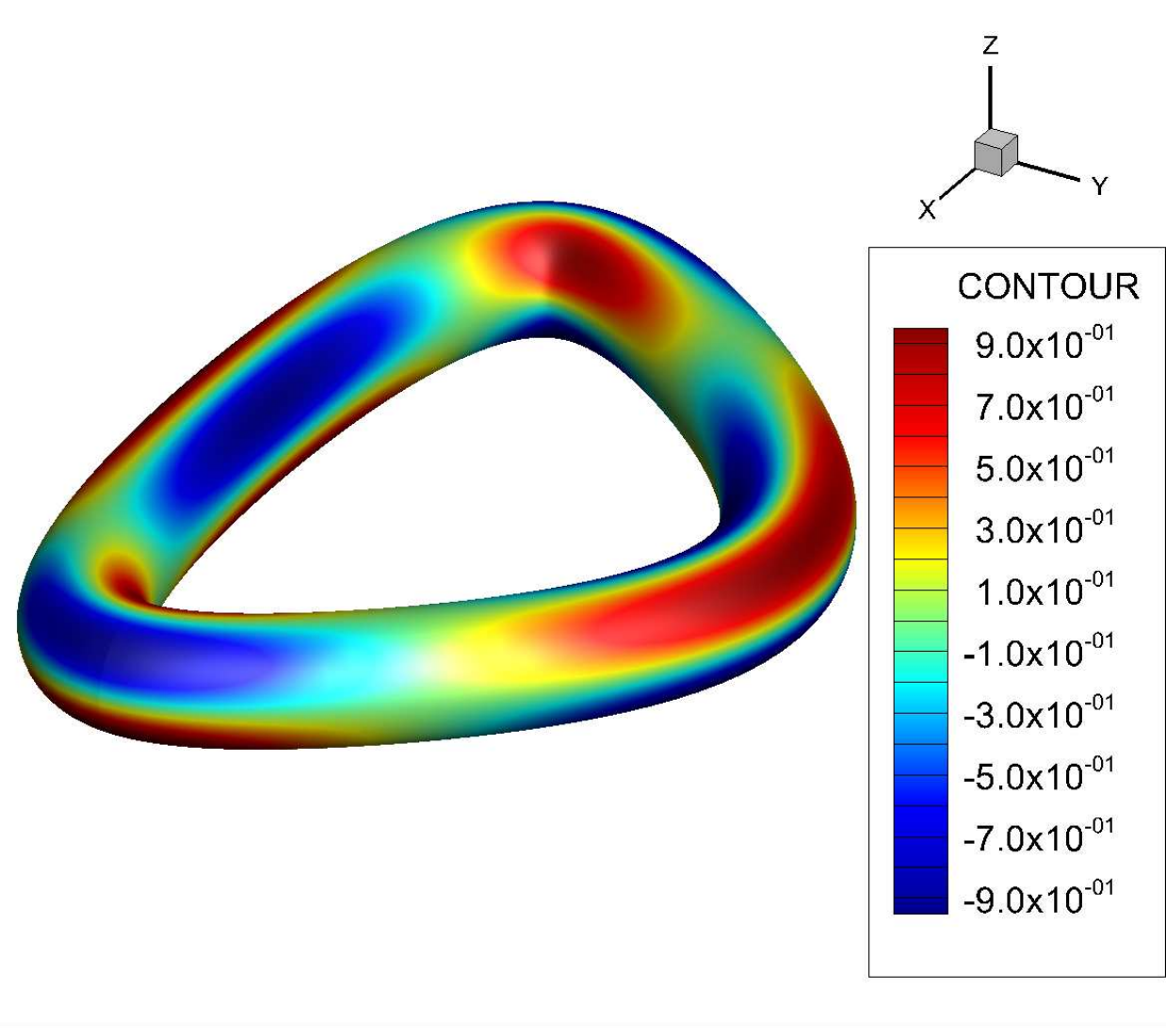}
    \caption{Similar to Figure \ref{ToSSMcou} but with the centerline shown in Figure \ref{fig:Openline}  (cases (f)-(h)).}
    \label{NewPipeCou}
\end{figure}

\subsubsection{Numerical experiments for helical pipe geometries}
Next, let us test the effectiveness and accuracy of the proposed compact difference scheme for helical pipe geometries.
For this purpose, we set $\lambda = \sin\theta \sin\omega$ and select the following test function
\begin{equation}\label{SShptest}
    u(\theta,\omega) =  (\omega-\omega_l)^3 (\re^{-\omega} - \re^{-\omega_r})^3 \re^{\sin\theta}.
\end{equation}
Similar to the torus cases, we test various helical pipe geometries with corss-section function given in Table \ref{tab:curve-plane}.
Particularly, we take $a=2, b=1$ and $I_\omega=(0, 2\pi)$ for the centerline \eqref{hel-cent}.

In Table \ref{tab:hel-err}, we demonstrate the  approximation errors in the $H_h^{1}$-norm.
It is obvious that a fourth-order convergence rate is achieved for all cases, which are well agreed with our theoretical analysis in Theorem \ref{thm:conv-hel}.
We further depict our numerical solutions in Figure \ref{SHSSMcou}.
Finally, we consider other open pipes with circular cross-sections, as shown in Figure \ref{fig:Openline} (cases (a), (b) and (d)) with exact solution given by \eqref{SShptest}.
Note that a fourth-order convergence rate can also be achieved (not shown here).
We draw the numerical solutions in Figure \ref{OpenNewPipeCou}.

\begin{table}[!h]
    \centering
    \caption{$H_h^1$-norm errors of the proposed scheme for the helical pipe with six different cross-sectional functions listed in Table \ref{tab:curve-plane}.}
    \label{tab:hel-err}
    \renewcommand{\arraystretch}{1.3}
    \setlength{\tabcolsep}{5pt}
    \vspace*{-10pt}
    \begin{tabular}{|c|c|c|c|c|c|c|c|c|c|c|c|c|c|c|}
        \hline
        \multirow{2}{*}{$h$} & \multicolumn{2}{c|}{(a)} & \multicolumn{2}{c|}{(b)} & \multicolumn{2}{c|}{(c)} & \multicolumn{2}{c|}{(d)} & \multicolumn{2}{c|}{(e)} & \multicolumn{2}{c|}{(f)} \\
        \cline{2-13}
                             & $E_{N,M}$ & Rate & $E_{N,M}$ & Rate & $E_{N,M}$ & Rate & $E_{N,M}$ & Rate & $E_{N,M}$ & Rate & $E_{N,M}$ & Rate \\
        \hline
        $\frac{1}{80}$   & 2.7e-4 & –    & 5.8e-5 & –    & 3.1e-4 & –    & 1.2e-4 & –    & 2.7e-4 & –    & 2.2e-4 & –    \\
        \hline
        $\frac{1}{90}$   & 1.7e-4 & 3.98 & 3.7e-5 & 3.97 & 1.9e-4 & 4.21 & 7.7e-5 & 4.01 & 1.7e-4 & 3.96 & 1.2e-4 & 4.80 \\
        \hline
        $\frac{1}{100}$  & 1.1e-4 & 3.99 & 2.4e-5 & 3.98 & 1.2e-4 & 4.03 & 5.0e-5 & 4.05 & 1.1e-4 & 3.96 & 8.1e-5 & 4.04 \\
        \hline
        $\frac{1}{110}$  & 7.7e-5 & 3.99 & 1.6e-5 & 3.98 & 8.4e-5 & 3.86 & 3.4e-5 & 4.07 & 7.6e-5 & 3.97 & 5.4e-5 & 4.15 \\
        \hline
        $\frac{1}{120}$  & 5.5e-5 & 3.99 & 1.2e-5 & 3.99 & 5.7e-5 & 4.49 & 2.4e-5 & 3.91 & 5.4e-5 & 3.99 & 3.8e-5 & 4.02 \\
        \hline
    \end{tabular}
\end{table}
\begin{figure}[!ht]
  \centering
  \subfigure[]{\includegraphics[width=0.33\linewidth]{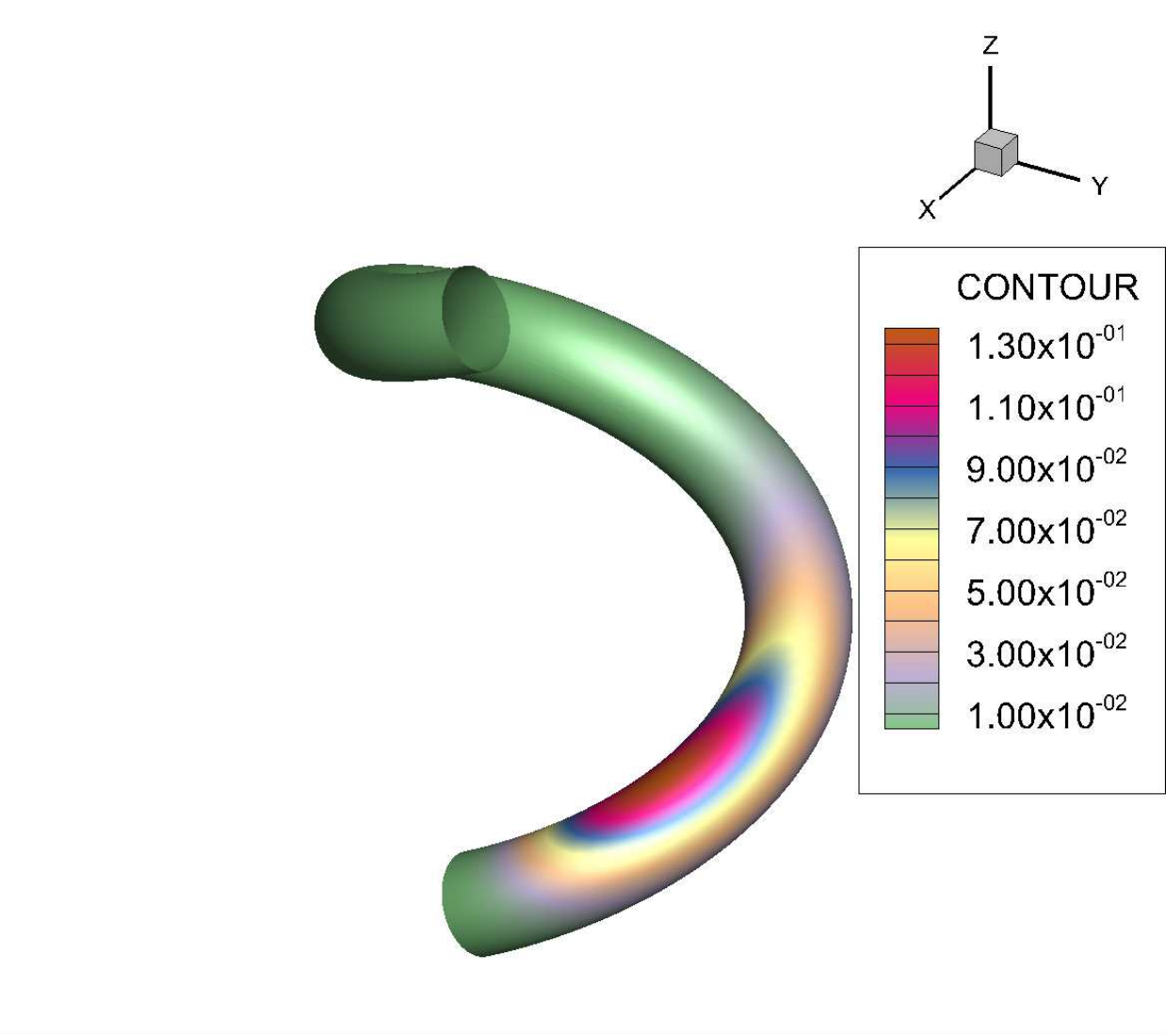}} \hspace*{-5pt}
  \subfigure[]{\includegraphics[width=0.33\linewidth]{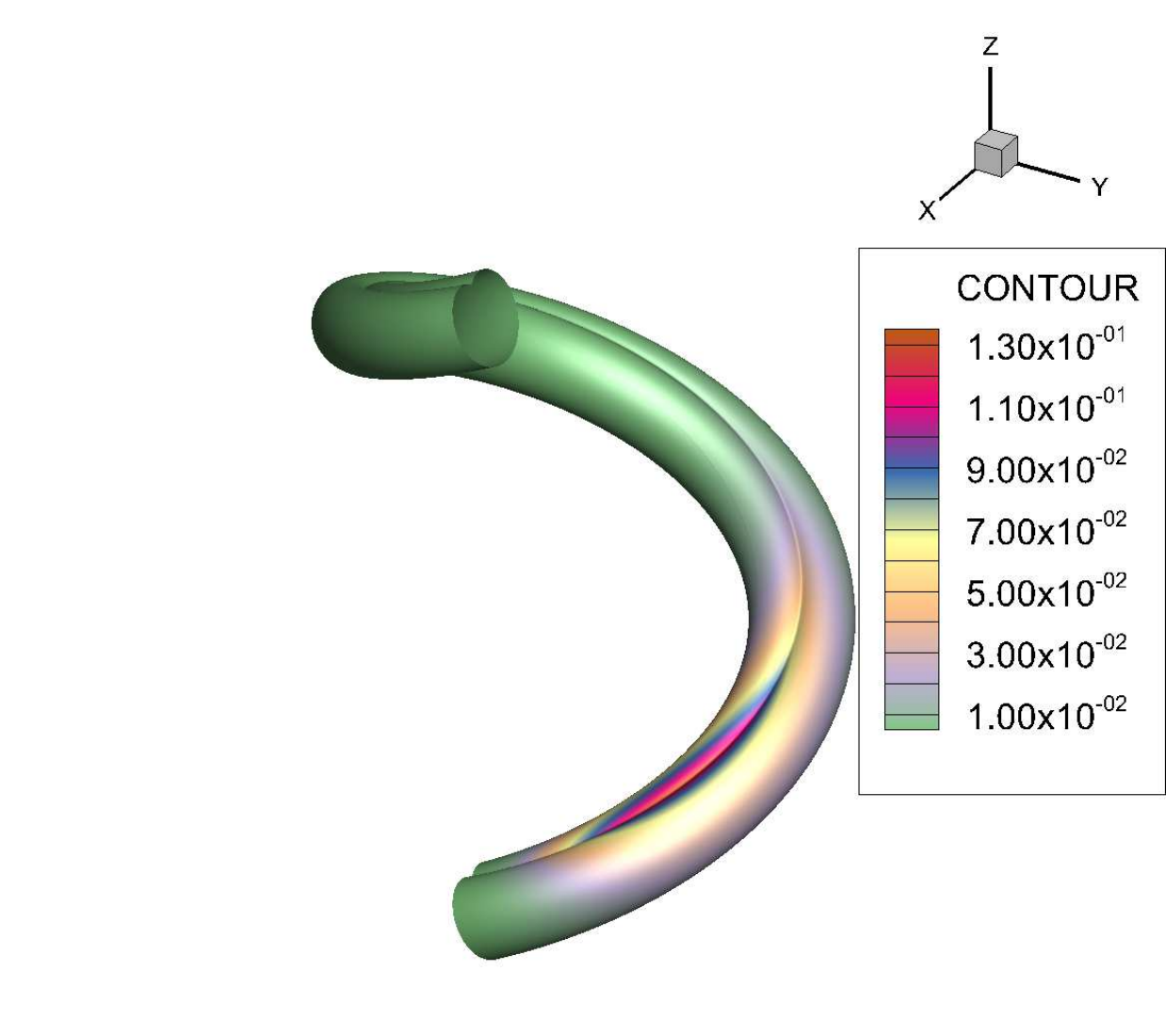}} \hspace*{-5pt}
  \subfigure[]{\includegraphics[width=0.33\linewidth]{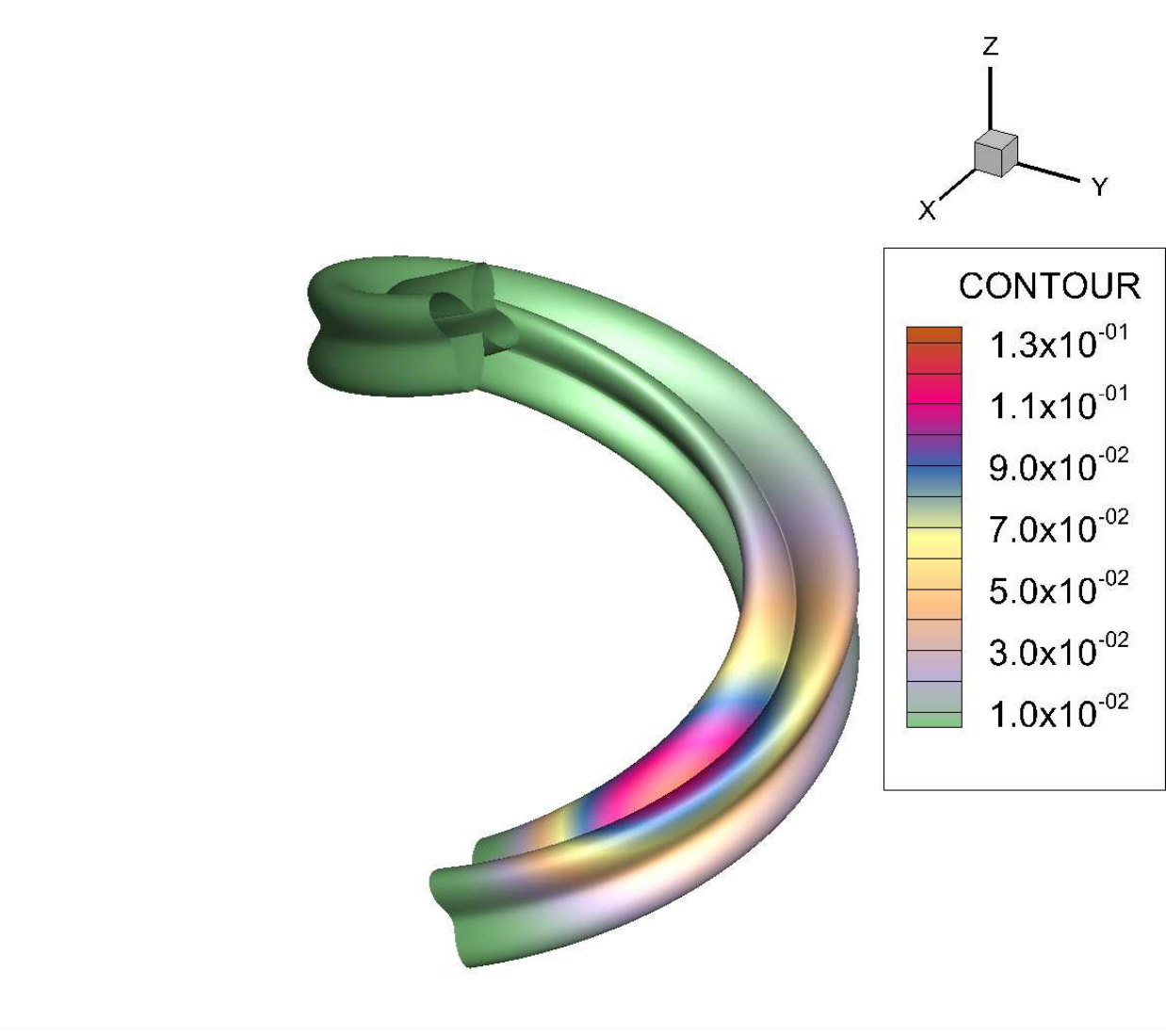}} \\
  \subfigure[]{\includegraphics[width=0.33\linewidth]{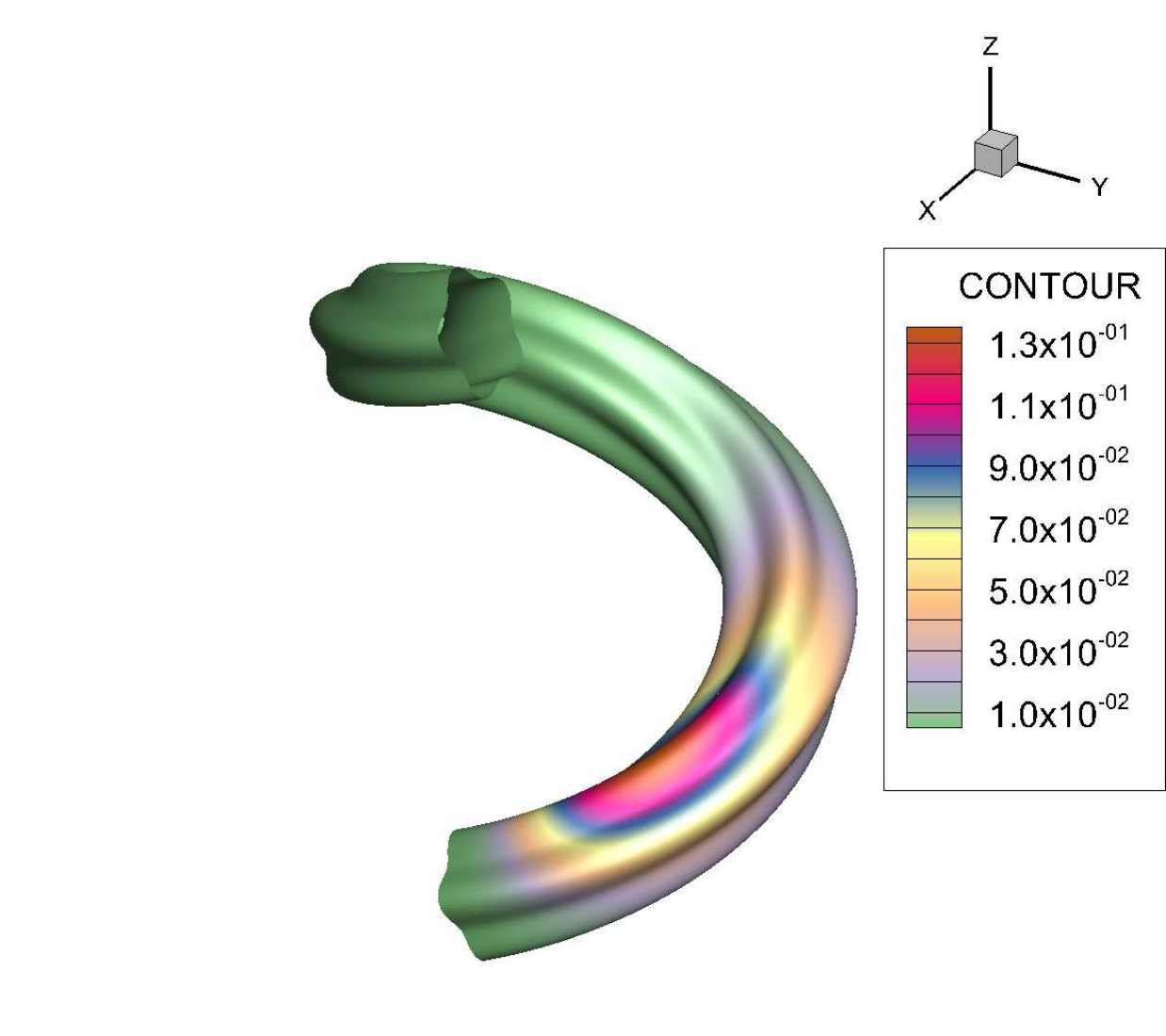}} \hspace*{-5pt}
  \subfigure[]{\includegraphics[width=0.33\linewidth]{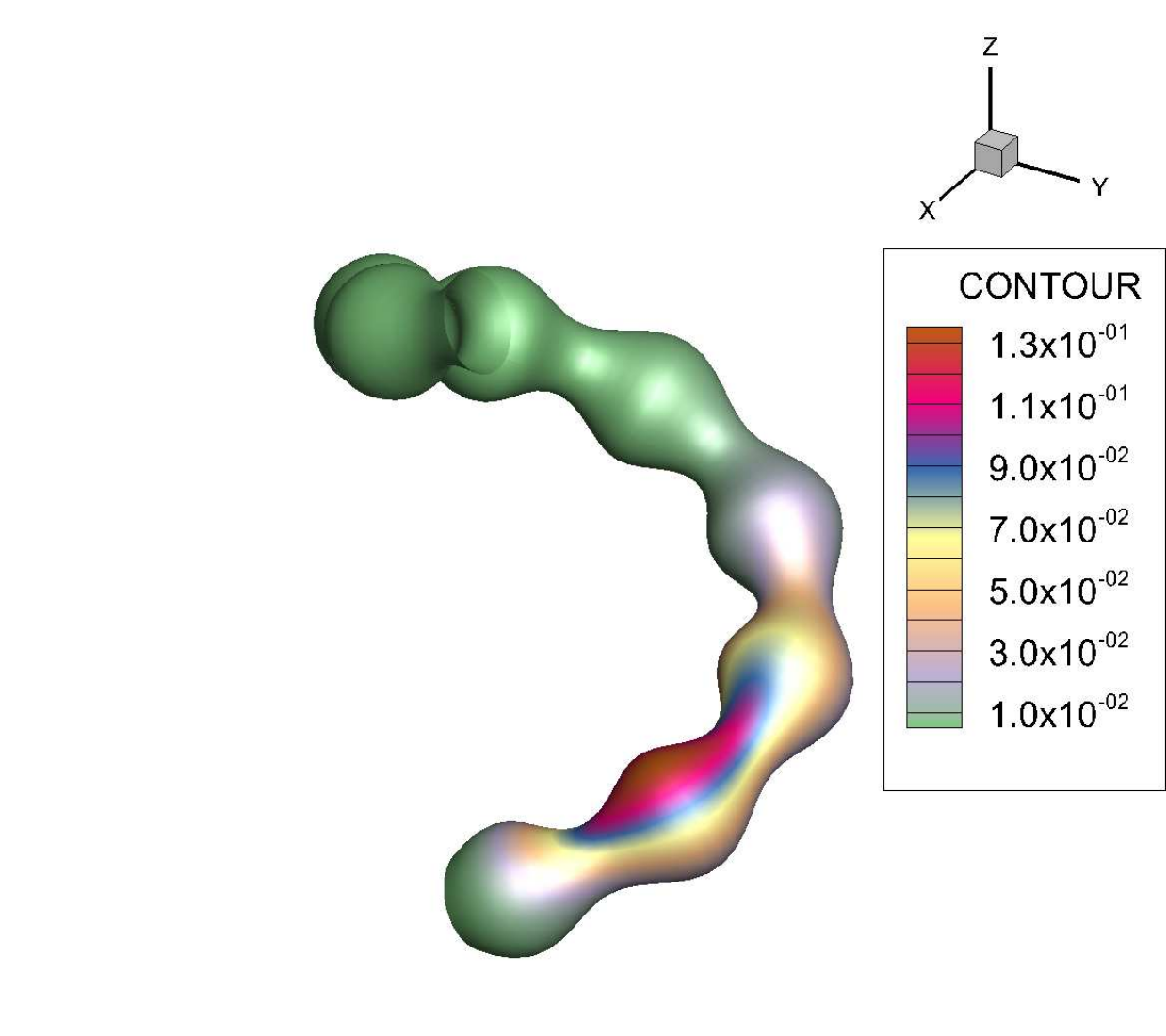}} \hspace*{-5pt}
  \subfigure[]{\includegraphics[width=0.33\linewidth]{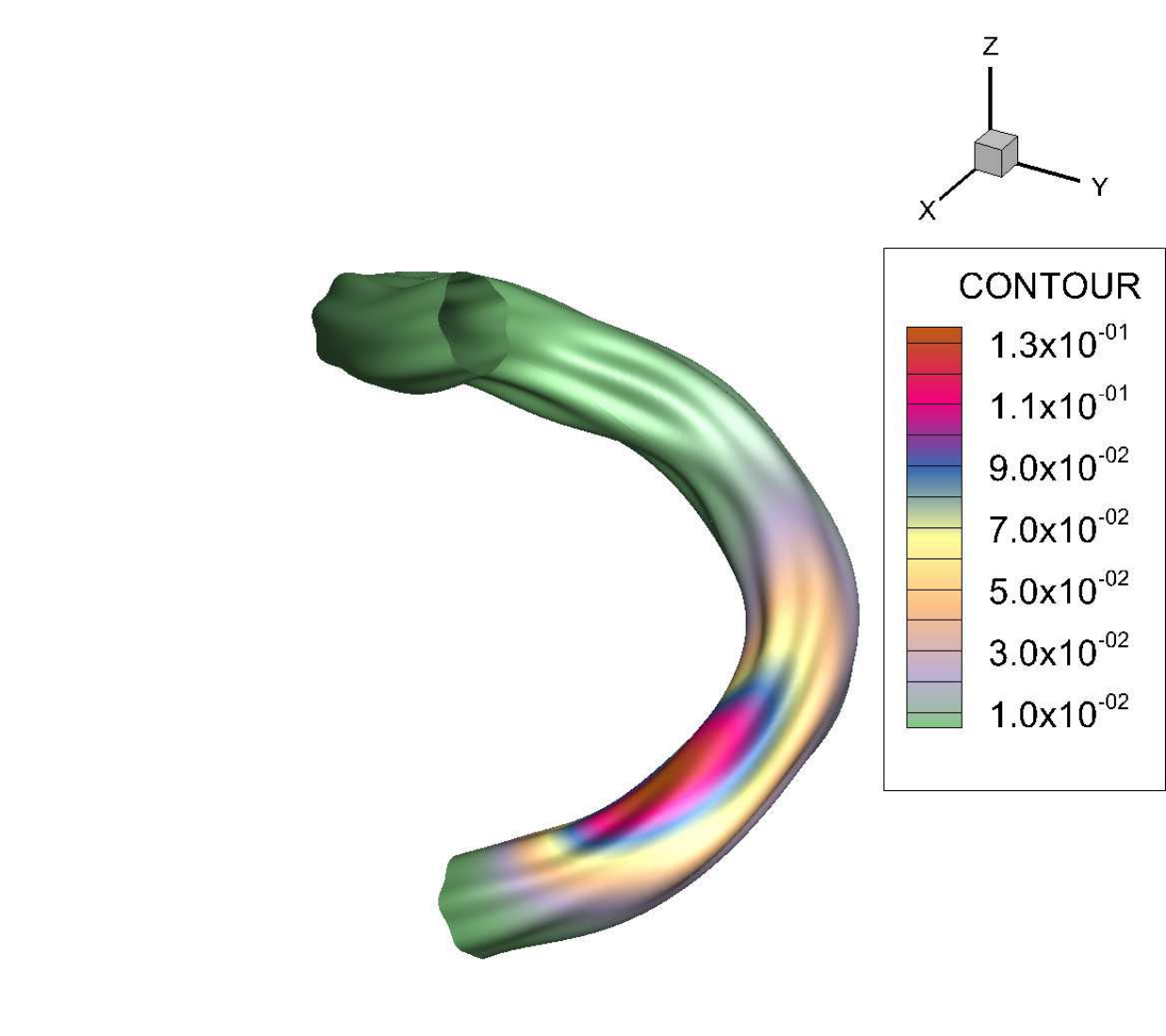}}
  \caption{Numerical solutions of the proposed scheme for the Poisson-type equation \eqref{eq:lap-hel} on several helical pipe geometries with the cross-sectional functions listed in Table \ref{tab:curve-plane}.} 
  \label{SHSSMcou}
\end{figure}

\begin{figure}[ht!]
  \centering
  \subfigure[]{\includegraphics[width=0.3\linewidth]{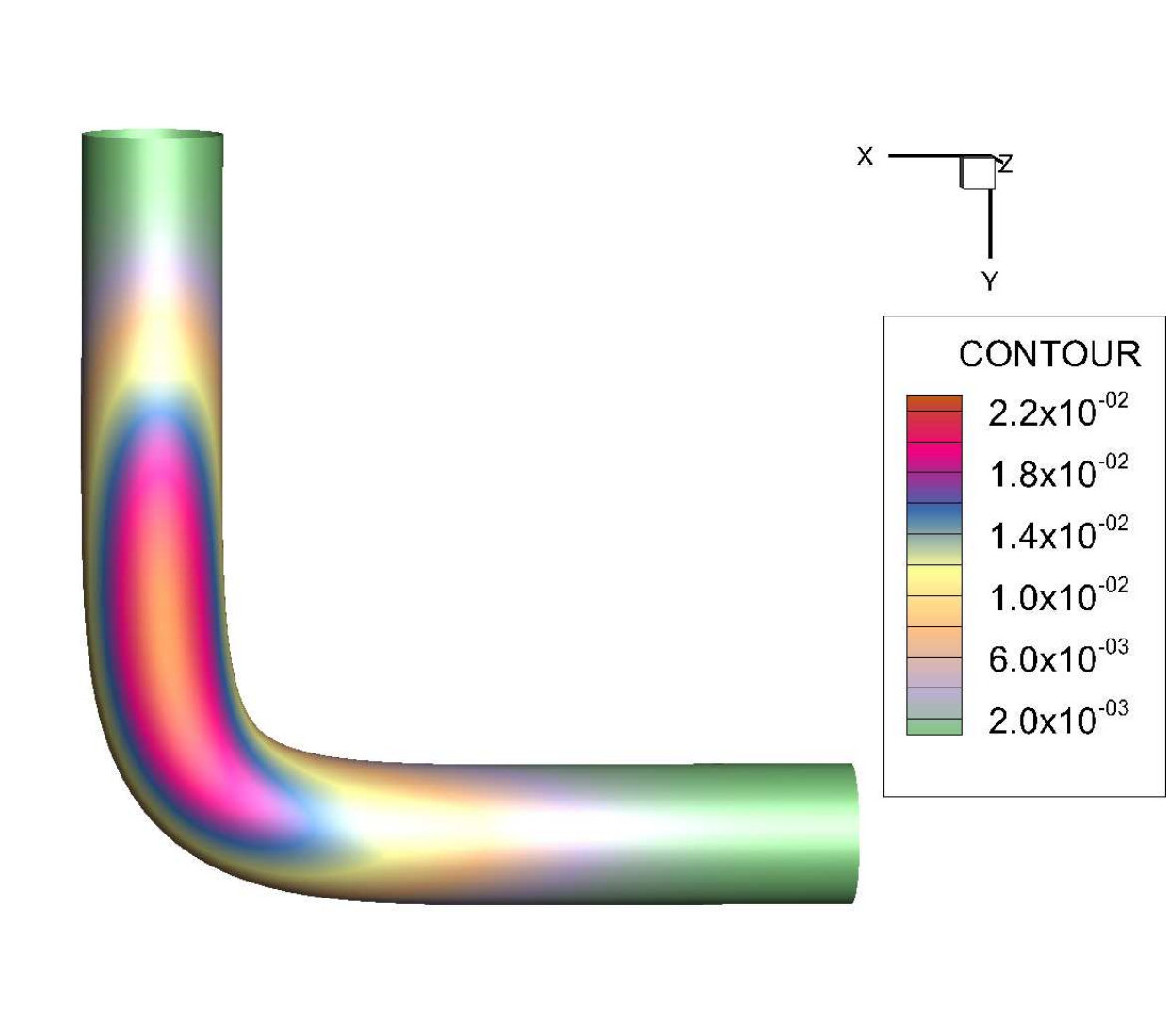}} \hspace*{-5pt}
  \subfigure[]{\includegraphics[width=0.3\linewidth]{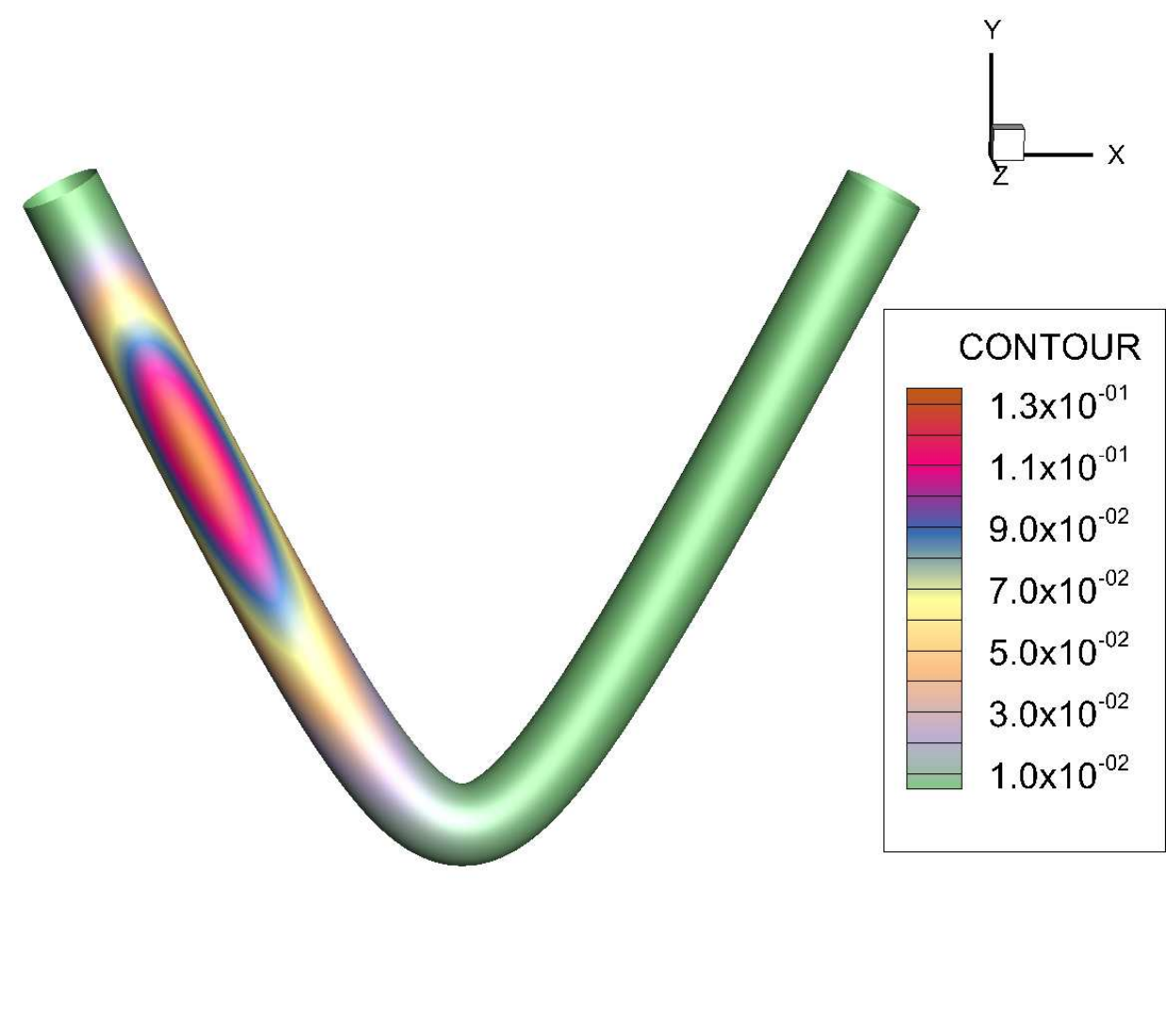}} \hspace*{-5pt}
  \subfigure[]{\includegraphics[width=0.3\linewidth]{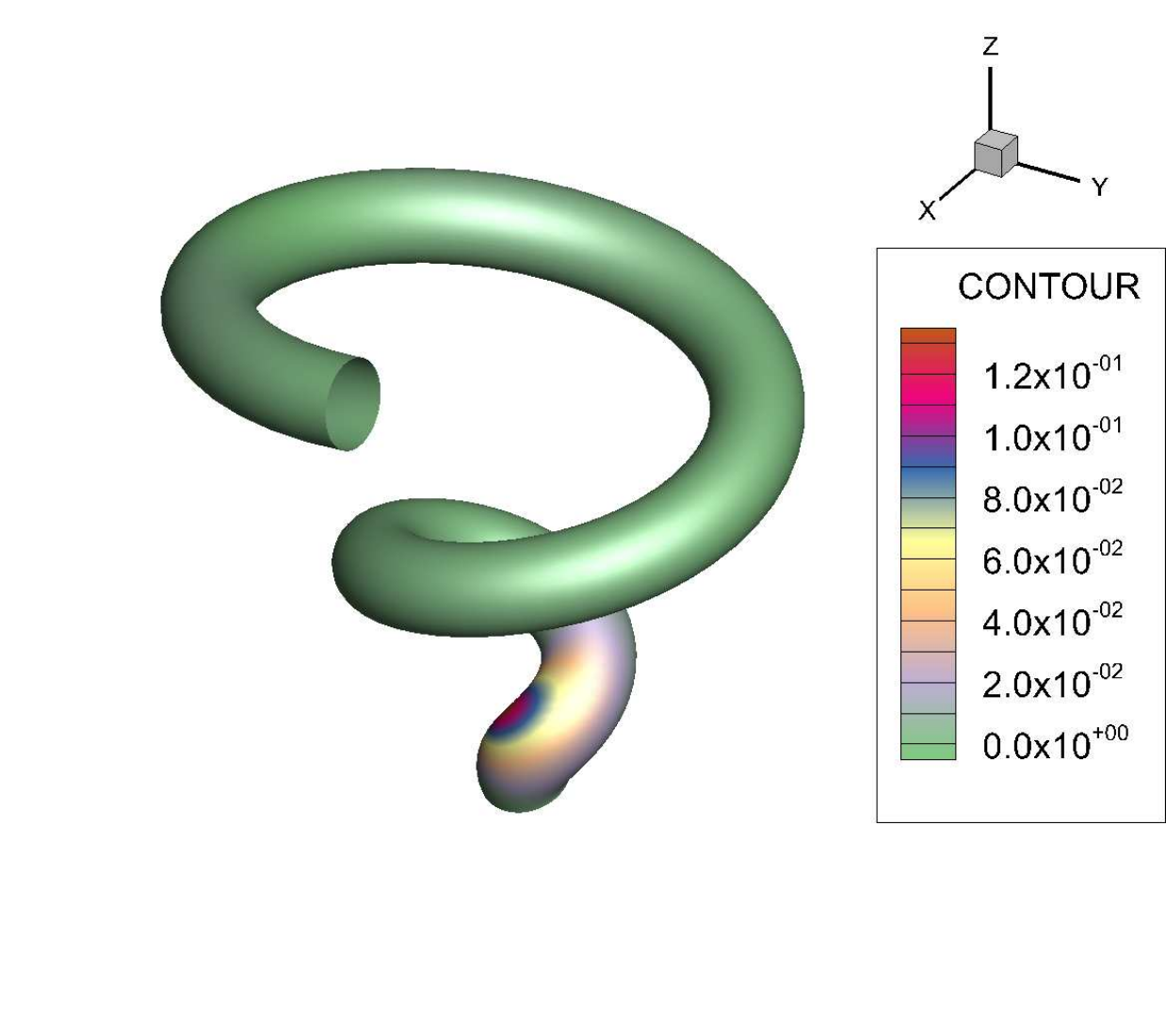}}
  \caption{Similar to Figure \ref{SHSSMcou} but with the centerline shown in Figure \ref{fig:Openline} (cases (a), (b) and (d)).}
  \label{OpenNewPipeCou}
\end{figure}

{
\subsection{Numerical experiments for varying regularity of $R(\theta,\omega)$}

Finally, we investigate the influence of the regularity of the cross-sectional function
\begin{align*}
   R(\theta,\omega) = \left( |\cos\theta|^\gamma + |\sin\theta|^\gamma \right)^{-\frac{1}{\gamma}},
\end{align*}
on the convergence behavior of the proposed fourth-order compact difference scheme \eqref{coschsshf2}.
For simplicity, we consider the torus pipe geometries as defined in previous subsection.
Note that the parameter $\gamma > 0$ controls the smoothness of $R(\theta,\omega)$, which can be characterized in H\"{o}lder spaces. More precisely, we have
\begin{align*}
R(\theta,\omega) \in
\begin{cases}
C^{0,\gamma}(\Omega), & \text{if } 0 < \gamma < 1, \\
C^{1,\gamma-1}(\Omega), & \text{if } 1 < \gamma < 2, \\
C^{2}(\Omega), & \text{if } \gamma \ge 2.
\end{cases}
\end{align*}

In Figure~\ref{fig:rc-convergence}, we plot discrete $H_h^1$-norm errors versus different values of $h$ in the log-log scaled axis with various $\gamma.$
The three panels correspond to the low regularity ($0 < \gamma \leq 1$), transition ($1 < \gamma \leq 2$), and high regularity ($\gamma > 2$) regimes. These results demonstrate that the convergence rate depends significantly on the regularity of $R(\theta,\omega)$. Precisely, as $\gamma \to 0^+$, the convergence order approaches one. As $\gamma \to 1$, the order approaches two, consistent with the square-shaped cross-section ($\gamma=1$) exhibiting corner singularities. While for $1 < \gamma < 2$, the convergence order gradually increases from two to four. Finally, fully fourth-order convergence rate is achieved for $\gamma \geq 2$.

\begin{figure}[!t]
  \centering
  \includegraphics[width=0.32\textwidth]{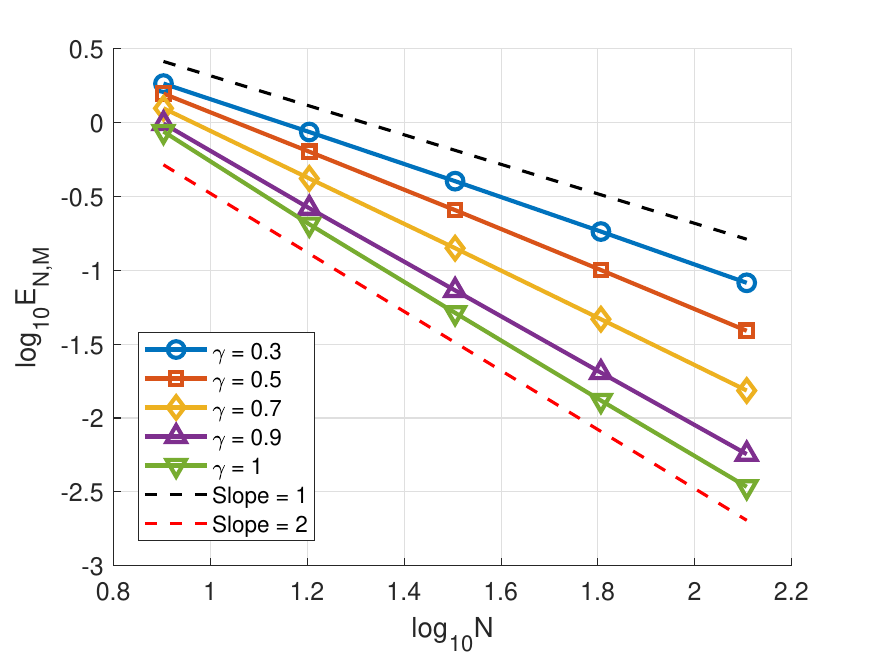} \hspace*{-5pt}
  \includegraphics[width=0.32\textwidth]{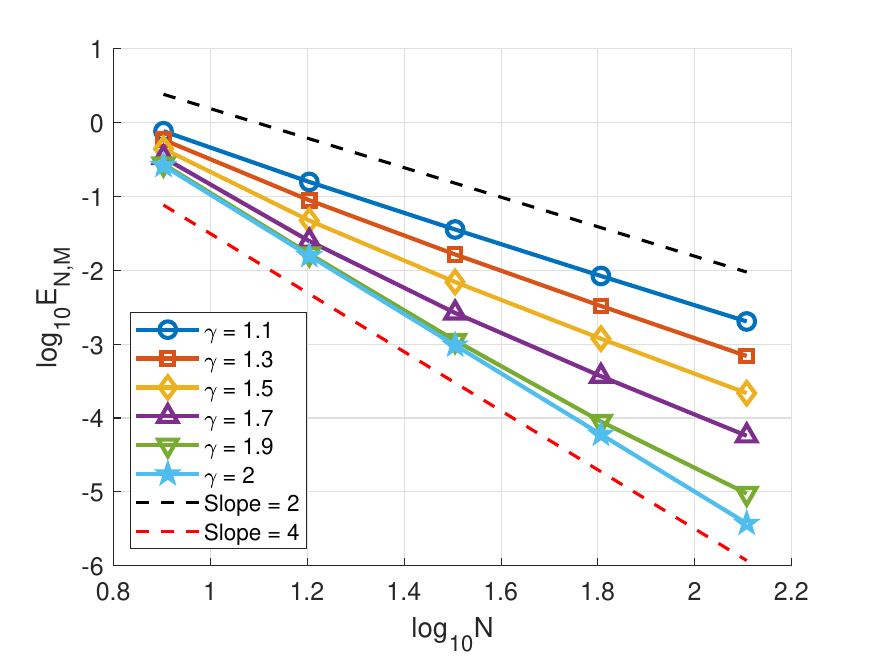} \hspace*{-5pt}
  \includegraphics[width=0.32\textwidth]{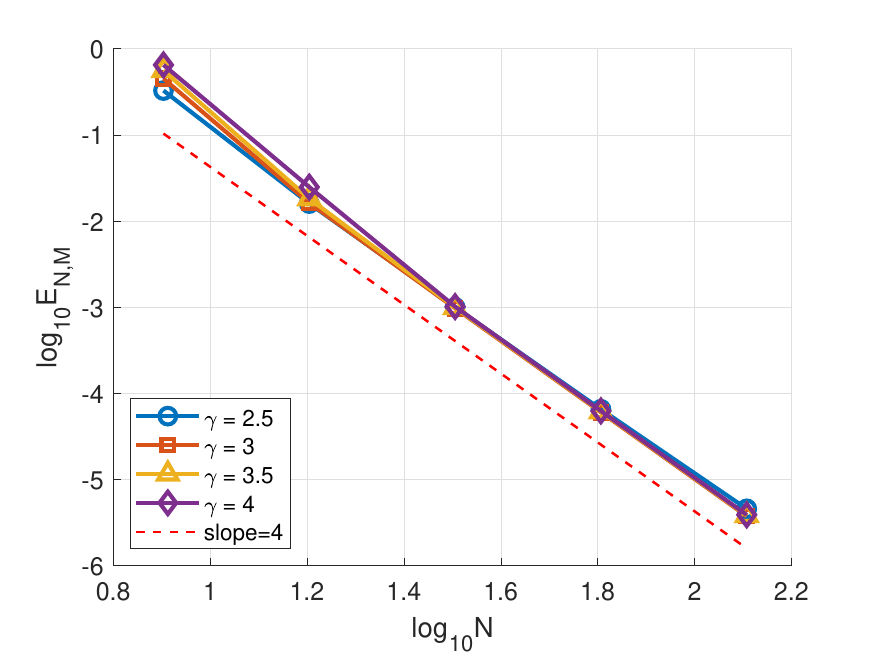}
  \caption{Convergence curves of the compact difference scheme for varying $\gamma$, controlling the regularity of $R(\theta,\omega)$, in the log-log scaled axis. Left: low-regularity regime ($0<\gamma \leq 1$). Middle: transition regime ($1 < \gamma \leq 2$). Right: high-regularity regime ($\gamma > 2$).}
  \label{fig:rc-convergence}
\end{figure}

Meanwhile, in the Figure \ref{fig:cross-section-pipes}, we present cross-sectional shapes (top row) and the corresponding torus pipe surfaces (bottom row) for three representative values of $\gamma$: 0.5, 1, and 1.5. As $\gamma$ increases, the cross-section evolves from a star-like shape with sharp corners at $\gamma = 0.5$ to a square at $\gamma = 1$, and finally to a smoother, more rounded profile at $\gamma = 1.5$.
\begin{figure}[!t]
  \centering
  \includegraphics[width=0.3\linewidth]{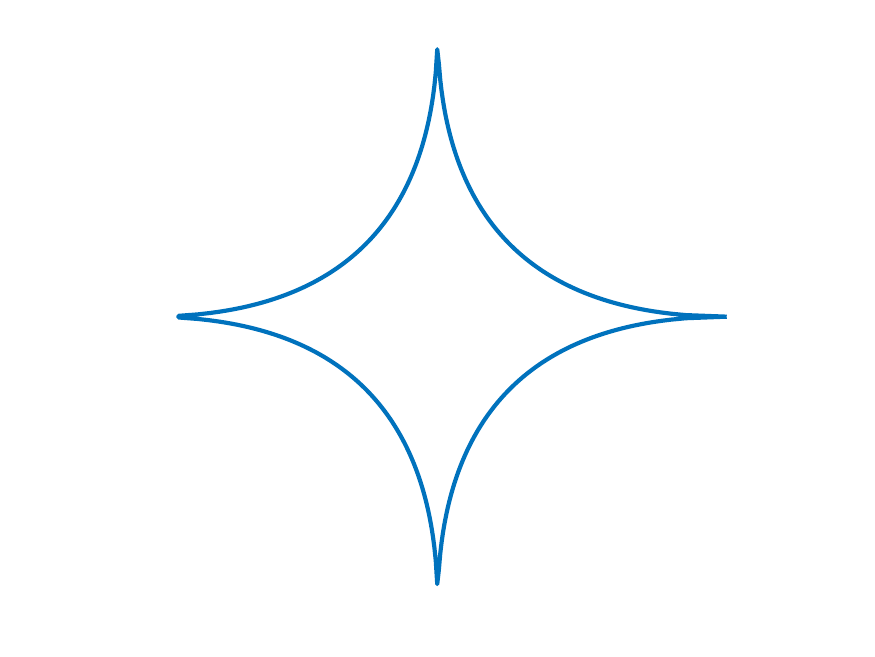}
  \includegraphics[width=0.3\linewidth]{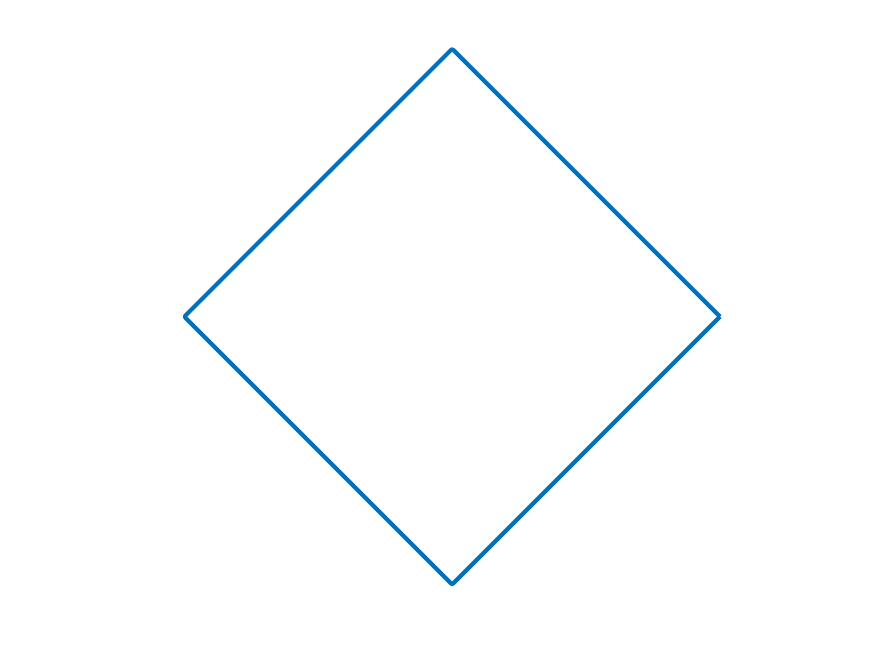}
  \includegraphics[width=0.3\linewidth]{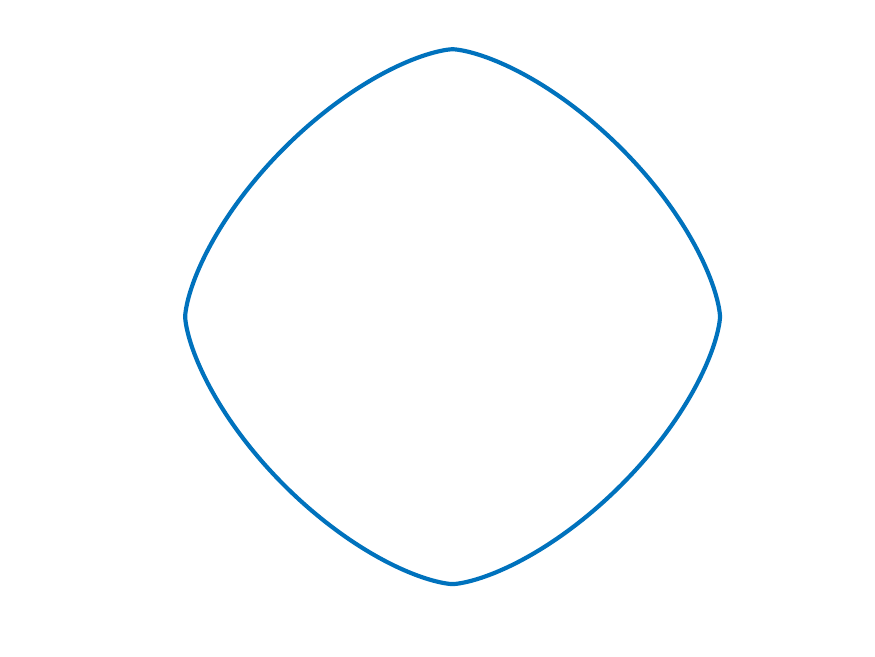}
  \subfigure[$\gamma=0.5$]{
  \includegraphics[width=0.3\linewidth]{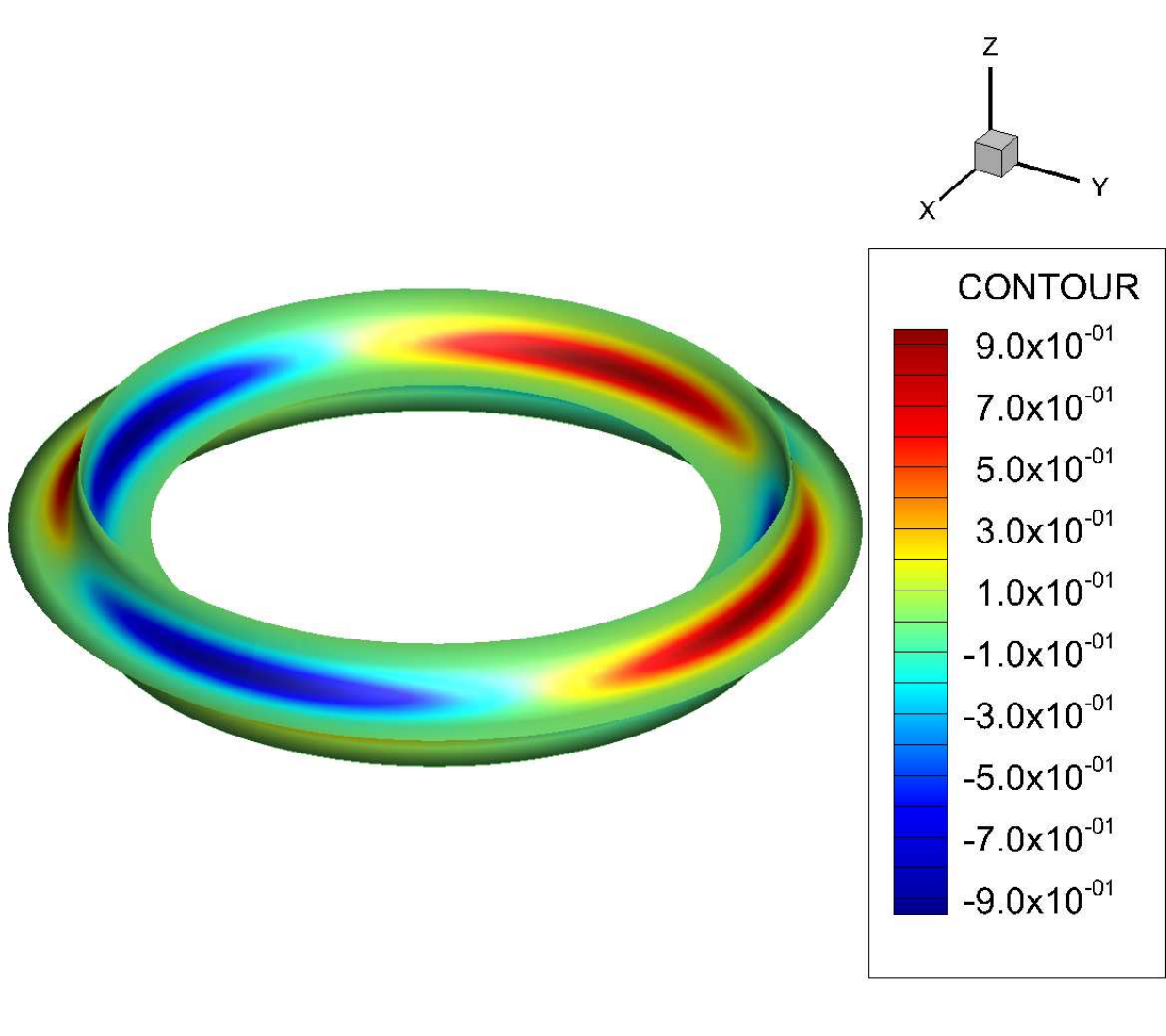}
  } \hspace*{-5pt}
  \subfigure[$\gamma=1.0$]{
  \includegraphics[width=0.3\linewidth]{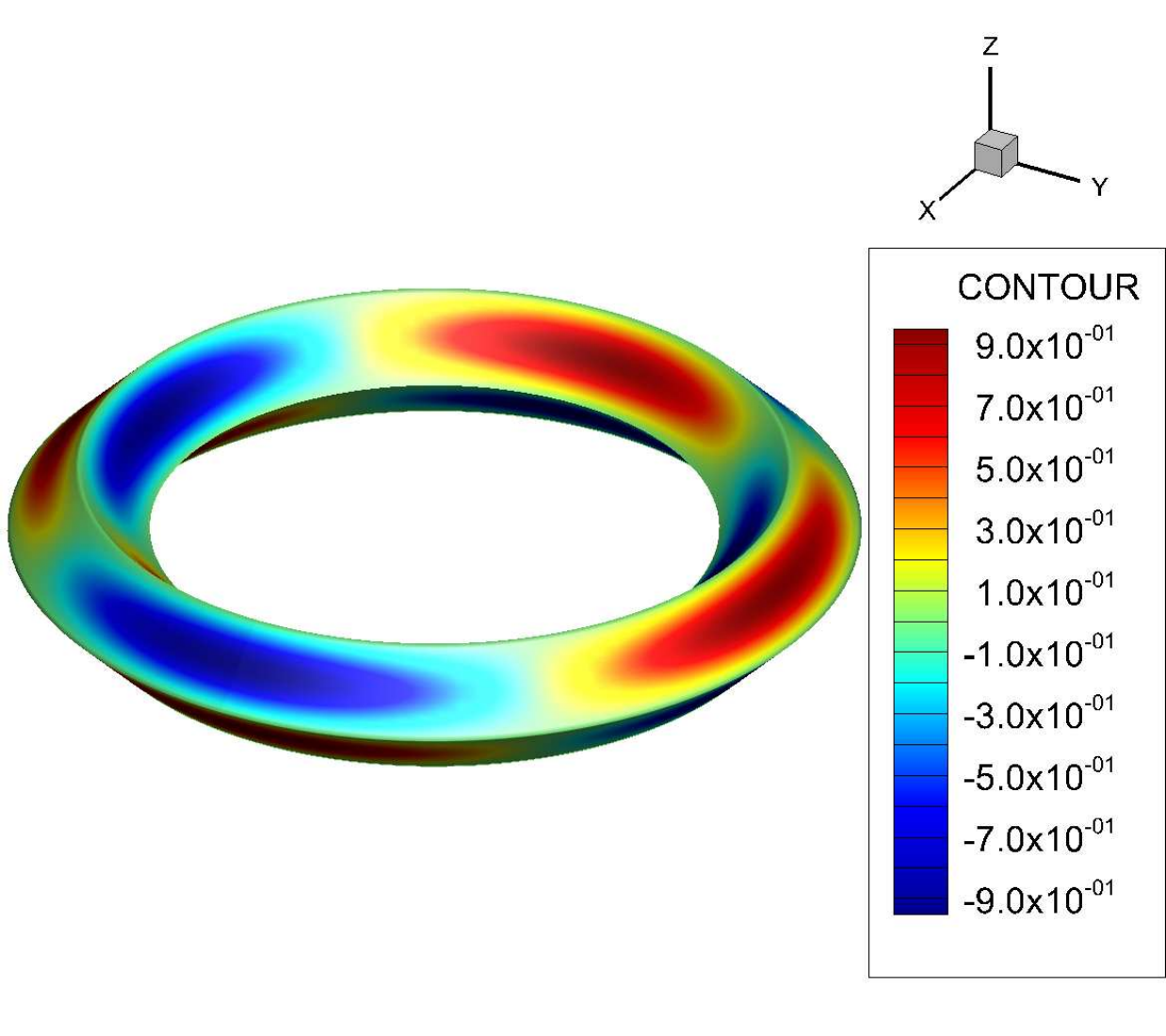}
  } \hspace*{-5pt}
  \subfigure[$\gamma=1.5$]{
  \includegraphics[width=0.3\linewidth]{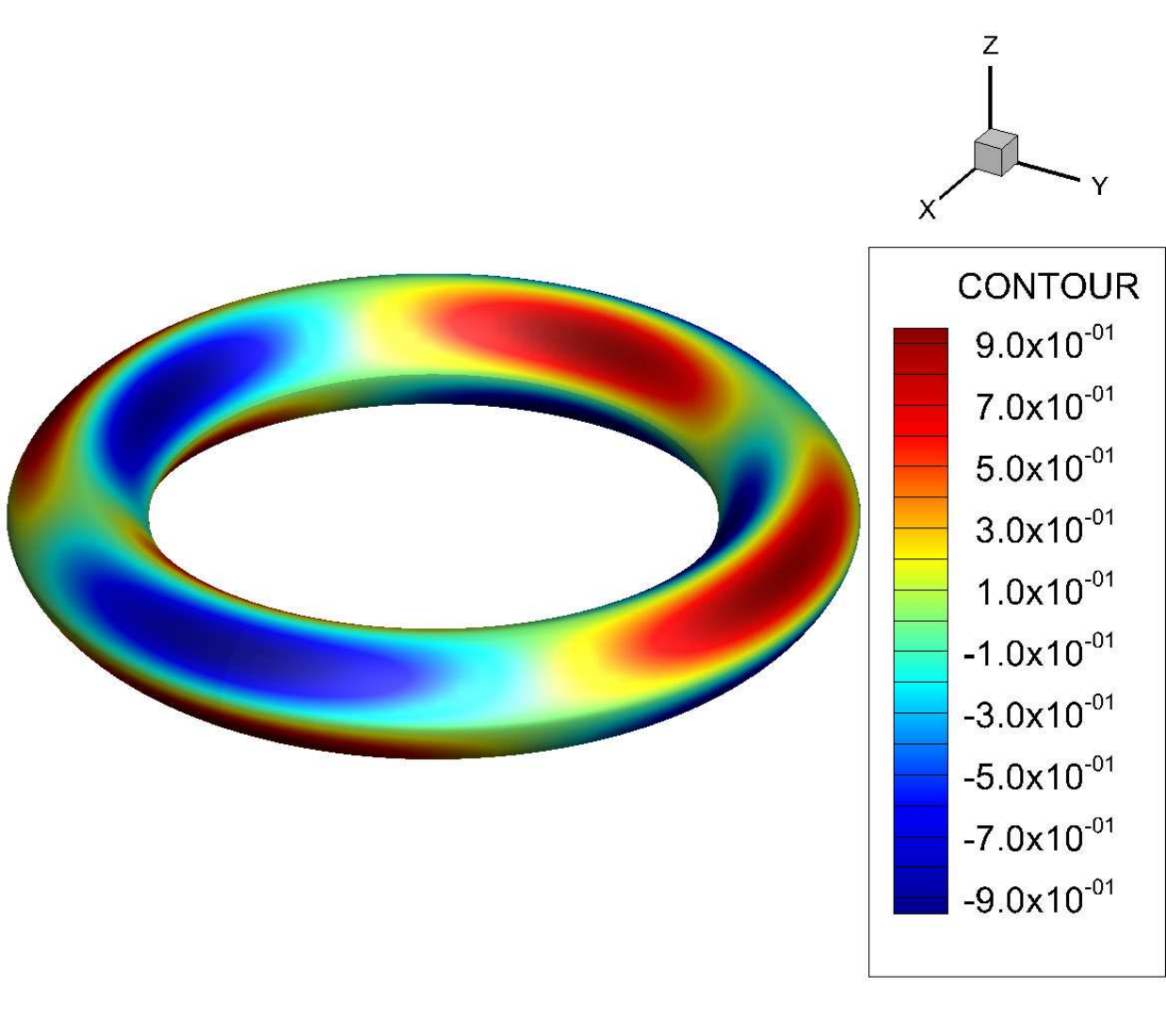}} 
\caption{Cross-sectional shapes (top row) and corresponding torus pipe geometries (bottom row) for $\gamma=0.5$ (left), $\gamma=1$ (middle), and $\gamma=1.5$ (right). Increasing $\gamma$ transforms the boundary from a star-like shape with sharp corners ($\gamma=0.5$) to a square ($\gamma=1$) and finally towards a smoother, more rounded profile ($\gamma=1.5$).}
\label{fig:cross-section-pipes}
\end{figure}

}

\section{Concluding remarks}\label{sec:con}
Based on the Frenet--Serret formulas,
we established a general moving coordinate system for pipe geometries with centerline being an arbitrarily nonzero curvature curve. With this coordinate system, we derived various pipe geometries under different cross-sectional functions.
Through the Riemannian metrics, we derived the Laplace--Beltrami operators in these new coordinates. 
To handle the variable coefficients and mixed derivatives involved in these operators, we developed efficient fourth-order compact finite difference schemes.
We also rigorously proved the solvability, stability and convergence of the proposed method for Poisson-type equations, which were also illustrated through amount of numerical experiments on various pipe geometries. 
In our future work, we shall consider some efficient preconditioners for the derived linear system.
Furthermore, some applications to time-dependent PDEs on these pipe geometries will also be reported. 
While compact finite difference schemes are used in this paper for simplicity, spectral methods could also be investigated as a potentially efficient approach for handling variable coefficient problems.

\appendix
\section{Proof of Lemma \ref{comppro1}}\label{app:lmm1}
\setcounter{equation}{0}
\setcounter{lmm}{0}
\setcounter{thm}{0}

We just give the proof of \eqref{Ccompact} and one can prove \eqref{Dcompact} with similar arguments.
Firstly, we show that there holds
\begin{align}\label{NawNat3}
    \nabla_{\!\omega}\big(q \nabla_{\!\theta} u\big)_{ij} = \big(\partial_\omega (q \partial_\theta u)\big)_{ij} + \frac{h_\omega^2}{6}\big(\partial_\omega^3 (q \partial_\theta u)\big)_{ij} + \frac{h_\theta^2}{6}\big(\partial_\omega (q \partial_\theta^3 u)\big)_{ij} + \mathcal{O}(h^4).
\end{align}
Indeed, consider the discrete operator
\begin{align}\label{NawNat}
    \nabla_{\!\omega}\big(q \nabla_{\!\theta} u\big)_{ij} = \frac{q_{i,j+1} \nabla_{\!\theta} u_{i,j+1} - q_{i,j-1} \nabla_{\!\theta} u_{i,j-1}}{2h_\omega}.
\end{align}
In view of \eqref{gridfun} and applying Taylor expansion in the $\theta$-direction at the points $(i,j\pm1)$, we can readily derive that
\begin{align*}
    \nabla_{\!\theta} u_{i,j\pm1} = \partial_\theta u_{i,j\pm1} + \frac{h_\theta^2}{6} \partial_\theta^3 u_{i,j\pm1} + \frac{h_\theta^4}{120} \partial_\theta^5 u_{i,j\pm1} + \mathcal{O}(h_\theta^6).
\end{align*}
Substituting the above formula into \eqref{NawNat} and applying Taylor expansion in the $\omega$-direction gives \eqref{NawNat3}.

For simplicity, we denote 
\begin{align*}
    \tilde{v} = \partial_\omega (q \partial_\theta u), \quad v_1 = \partial_\theta (q \partial_\theta u) = \partial_{\theta} q \partial_\theta u + q \partial_\theta^2 u.
\end{align*}
With a careful calculation, we get
\begin{align*}%\label{Diffwq3}
    \partial_\omega (q \partial_\theta^3 u) 
    &= \partial_\omega (\partial_\theta v_1) - 2\partial_\omega \Big(\frac{\partial_{\theta} q}{q}v_1\Big) + 2\partial_\omega (\tilde{q}_1 \partial_\theta u) \\
    &= \partial_\theta^2 \tilde{v} - 2\Big(\partial_\omega \Big(\frac{\partial_{\theta} q}{q}\Big)\Big)v_1 - 2\frac{\partial_{\theta} q}{q}\partial_\theta \tilde{v} + 2\partial_\omega (\tilde{q}_1 \partial_\theta u).
\end{align*}
Substituting the above formula into \eqref{NawNat3} yields
\begin{align*}
    \nabla_{\!\omega}\big(q \nabla_{\!\theta} u\big)_{ij} 
    = \tilde{v}_{ij} + \frac{h_\omega^2}{6}\tilde{v}_{\omega\omega,ij} + \frac{h_\theta^2}{6}\Big(\partial_\theta^2 \tilde{v}_{ij} - 2\Big(\partial_\omega \frac{\partial_{\theta} q}{q}\Big)v_{1,ij} - 2\frac{\partial_{\theta} q}{q}\partial_\theta \tilde{v}_{ij} + 2\partial_\omega (\tilde{q}_1 \partial_\theta u)_{ij}\Big) + \mathcal{O}(h^4).
\end{align*}
Rearranging the terms and use $\tilde{v}_{\omega\omega,ij}=\delta_\omega^2\tilde{v}_{ij}+\mathcal{O}(h_\omega^2)$, the above formula can be written as
\begin{align*}
    \nabla_{\!\omega}\big(q \nabla_{\!\theta} u\big)_{ij} 
    &= \Big(I+\frac{h_\theta^2}{6}\delta_\theta^2+\frac{h_\omega^2}{6}\delta_\omega^2\Big)\tilde{v}_{ij} - \frac{h_\theta^2}{3}\Big(\partial_\omega \Big(\frac{\partial_{\theta} q}{q}\Big)\Big)\big(\delta_\theta(q \delta_\theta u)\big)_{ij}  \\
    &\quad - \frac{h_\theta^2}{3}\frac{\partial_{\theta} q}{q}\nabla_{\!\theta}\tilde{v}_{ij} + \frac{h_\theta^2}{3}\nabla_{\!\omega}(\tilde{q}_1 \nabla_{\!\theta} u)_{ij} + \mathcal{O}(h^4),
\end{align*}
which implies \eqref{Ccompact}.
Thus, we end the proof.

\section{Proof of Theorem \ref{thm:conv-hel}}\label{app:proof-hel}
\renewcommand{\thelmm}{B.\arabic{lmm}}
\setcounter{equation}{0}
\setcounter{lmm}{0}
\setcounter{thm}{0}

To establish our analysis stated in Theorem \ref{thm:conv-hel}, the following inequalities play a significant role.
\begin{lemma}\label{MainLemma}
The following inequalities hold:
\begin{align}\label{Lhuu}
(\mathcal{L}_h u, u)
& \geq \frac{1}{2}(\|\delta_\theta u\|_{\rho_0}^2 + \|\delta_\omega u\|_{\eta}^2)
   + (1-C_1 h^2)\|u\|_{\varpi}^2,\\[6pt]
   \label{Ahuu}
   |(\mathcal{A}_h u, u)|
& \leq C_1 h^2 (\|\delta_\theta u\|_{\rho_0}^2 + \|\delta_\omega u\|_{\eta}^2 + \|u\|_{\varpi}^2),\\[6pt]
\label{Bhuu}
|(\mathcal{B}_h u, u)|
& \leq C_1 h (\|\delta_\theta u\|_{\rho_0}^2 + \|\delta_\omega u\|_{\eta}^2 + \|u\|_{\varpi}^2),\\[6pt]
\label{Chuu}
((\mathcal{S}_h - \mathcal{C}_h) u, u)
& \geq - \frac{1}{4}(\|\delta_\theta u\|_{\rho_0}^2 + \|\delta_\omega u\|_{\eta}^2),
\end{align}
where $C_1$ is a positive constant independent of $h_\theta$ and $h_\omega$, and $\eta=\frac{\rho_0 R^2}{\beta^2 R^2+\rho_0^2}$.
\end{lemma}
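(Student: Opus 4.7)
The proof organizes into four separate arguments, one per inequality, driven by two common tools—discrete integration by parts (Lemma \ref{secondordlemma}) and the inverse inequalities (Lemma \ref{inverinequLem})—together with the pointwise algebraic identities $p-\beta^2 q=\rho_0$ and $q-\beta^2 q^2/p=\eta$, which are immediate from $p=\rho_0+\beta^2R^2/\rho_0$ and $q=R^2/\rho_0$ and which mirror a continuous "completion of squares" making $\rho_0$ and $\eta$ the natural weights on the right-hand sides.

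For \eqref{Lhuu} I first apply Lemma \ref{secondordlemma} termwise to obtain
\begin{equation*}
    (\mathcal L_h u, u)=\|\delta_\theta u\|_{\theta,p}^2+\|\delta_\omega u\|_{\omega,q}^2-2\beta(q\nabla_{\!\theta}u,\nabla_{\!\omega}u)+\|u\|_\varpi^2.
\end{equation*}
At the continuous level the quadratic form $Q(a,b)=pa^2+qb^2-2\beta q ab$ admits two dual completion-of-squares representations, $\rho_0 a^2+q(b-\beta a)^2$ and $\eta b^2+p(a-\beta q/p\,b)^2$; averaging yields $Q(a,b)\ge \tfrac12(\rho_0 a^2+\eta b^2)$. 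I replicate this discretely by applying a weighted pointwise Young's inequality to the mixed term with two conjugate parameters, which pulls the bound back to $\tfrac12(\rho_0(\nabla_{\!\theta}u)^2+\eta(\nabla_{\!\omega}u)^2)$ at each node. Converting the $\nabla$-squared quantities into the desired $\delta$-squared norms is done through $|\nabla_{\!\theta}u_{ij}|^2\le\tfrac12(|\delta_\theta u_{i+1/2,j}|^2+|\delta_\theta u_{i-1/2,j}|^2)$ combined with a Taylor expansion of the weights at the midpoints, whose remainder is $O(h^2)$ times $\|u\|_\varpi^2$, absorbed into the $(1-C_1 h^2)$ coefficient.

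Inequality \eqref{Ahuu} follows almost mechanically: every summand in $\mathcal A_h$ carries an explicit $h_\theta^2$ or $h_\omega^2$ prefactor, and the boundedness of $\tilde p,\tilde q,\tilde q_1,\partial_\theta p/p,\partial_\omega q/q,\bar q$ (ensured by \eqref{HpRcon} of Theorem \ref{HpGenJGTh}) together with Lemma \ref{secondordlemma} and Cauchy--Schwarz reduces $(\mathcal A_h u,u)$ to $h^2$ times a sum of weighted $(\delta u,\delta u)$ and $(u,u)$ terms. Inequality \eqref{Bhuu} differs in one crucial respect: the summands of $\mathcal B_h$ are nested, an outer $\nabla$ or $\delta$ operator wrapping an inner $\delta(\hat p\delta u)$ or $\nabla(\hat q_1\nabla u)$. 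One application of Lemma \ref{secondordlemma} peels the outer layer; the inner compact-difference operator is then controlled using Lemma \ref{inverinequLem}, which trades one power of $h$ for a first difference. This degradation is exactly what turns an a priori $h^2$ bound into the stated $O(h)$ estimate.

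For \eqref{Chuu} I expand $(\mathcal S_h u,u)$ via repeated discrete integration by parts so that each of the eight summands becomes a positive fourth-order-in-$h$ norm squared of a third difference of $u$, such as $\tfrac{4}{9\rho_0}h_\theta^4\|\hat p\,\delta_\theta\delta_\theta^2 u\|_\theta^2$. A parallel expansion of $(\mathcal C_h u,u)$ produces cross-products of those same third differences with lower-order quantities $\delta_\theta u$ or $\nabla_{\!\omega}u$, and a pointwise Young's inequality with parameters matched precisely to the constants $\tfrac49,\tfrac{25}{36},\tfrac14$ built into $\mathcal S_h$ absorbs each high-order part back into the corresponding term of $(\mathcal S_h u,u)$. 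The residual left behind is exactly $-\tfrac14(\|\delta_\theta u\|_{\rho_0}^2+\|\delta_\omega u\|_\eta^2)$. The principal obstacle will be the bookkeeping in this final step: each of the eight positive terms in $\mathcal S_h$ must be paired with its unique matching indefinite term in $\mathcal C_h$, and the Young parameters must close so that all lower-order residuals together consume at most $\tfrac14$ of the target norms. The choice of $\eta_0$ as the minimum of the two ratios involving $\rho_0^\pm$ and $R^\pm$ is exactly what guarantees a uniform pointwise lower bound of $\eta$ compatible with that calibration. A secondary recurring difficulty is tracking the $O(h^k)$ defects produced when grid-shifted weights like $p_{i\pm1/2,j}$ are replaced by $p_{ij}$, and charging each defect to the correct one of \eqref{Lhuu}--\eqref{Chuu}.
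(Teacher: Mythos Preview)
Your proposal is correct and follows essentially the same route as the paper's proof: the dual completion of squares via $p-\beta^2 q=\rho_0$ and $q-\beta^2 q^2/p=\eta$ (equivalently $\beta q=\sqrt{p-\rho_0}\sqrt{q}=\sqrt{p}\sqrt{q-\eta}$) averaged to yield the $\tfrac12$ in \eqref{Lhuu}, the direct $h^2$-bound for \eqref{Ahuu}, the one-power loss from the inverse inequality in \eqref{Bhuu}, and the Young-parameter matching between $\mathcal S_h$ and $\mathcal C_h$ for \eqref{Chuu} all coincide with the paper's argument, including your reading of $\eta_0$ as the uniform lower bound for $\eta$ that makes the calibration close.
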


\begin{proof}
Noting $\frac{p_i+p_{i+1}}{2}=p_{i+\frac{1}{2}}+\mathcal{O}(h^2)$,
with the aid of the inequalities
\begin{align*}%\label{ChmlemLc3}
    2\big( \beta R^2/\rho_0 \nabla_{\!\omega} u, \nabla_{\!\theta} u \big) 
    &\leq \| \sqrt{\beta^2 R^2/\rho_0} \nabla_{\!\theta} u \|^2 
    + \| \sqrt{R^2/\rho_0} \nabla_{\!\omega} u \|^2 \nn \\
    &= \| \sqrt{p - \rho_0} \nabla_{\!\theta} u \|^2 
    + \| \sqrt{q} \nabla_{\!\omega} u \|^2 \nn \\
    &\leq \| \delta_{\theta} u \|_{p - \rho_0}^2 
    + \| \delta_{\omega} u \|_{q}^2 + Ch^2 \| u \|^2,
\end{align*}
we can derive from Lemma \ref{secondordlemma} that
\begin{align*}
\begin{split}
      (\mathcal{L}_h  u, u) 
      &= \|\delta_\theta u\|_p^2+\|\delta_\omega u\|_q^2+\|u\|_{\varpi}^2-2\big(\beta R^2/\rho_0 \nabla_{\!\omega} u,\nabla_{\!\theta} u\big) \\
      &\geq  \|\delta_\theta u\|_p^2+\|\delta_\omega u\|_q^2 +\|u\|_{\varpi}^2 - \big(\left\|\delta_\theta u\right\|_{p-\rho_0}^{2}
      +\left\|\delta_\omega u\right\|_{q}^{2}+Ch^2\|u\|^2\big) \\
      &\geq \|\delta_\theta u\|_{\rho_0}^2+(1-Ch^2)\|u\|_{\varpi}^2.
\end{split}
\end{align*}
On the other hand, denoting $p=\rho_0+\frac{\beta^2 R^2}{\rho_0}$, $q=\frac{R^2}{\rho_0^2}$ and $\eta=\frac{\rho_0 R^2}{\beta^2 R^2+\rho_0^2}>0$, there holds
\begin{align*}
     \frac{\beta R^2}{\rho_0} = \sqrt{\frac{\beta^2 R^2}{\rho_0}} \sqrt{\frac{R^2}{\rho_0}}
     =\sqrt{\frac{\beta^2 R^2}{\rho_0} + \rho_0} \sqrt{\frac{R^2}{\rho_0} - \eta}
     =\sqrt{p}\sqrt{q-\eta}. 
\end{align*}
With a direct calculation, we have
\begin{align*}
    2\big(\beta R^2/\rho_0 \nabla_{\!\omega} u,\nabla_{\!\theta} u\big)
    \leq \|\sqrt{p} \nabla_{\!\theta} u\|^{2} + \|\sqrt{q-\eta}\nabla_{\!\omega} u\|^{2},
\end{align*}
which gives
\begin{align*}%\label{ChmlemLc2v4}
\begin{split}
    (\mathcal{L}_h  u, u) \geq \|\delta_\omega u\|_{\eta}^2+(1-Ch^2)\|u\|_{\varpi}^2.
\end{split}
\end{align*}
Thus, we complete the proof of \eqref{Lhuu}.

Next, by denoting
\begin{align*}
    L_1 &:= \frac{h_\theta^2}{3} \big( \delta_\theta(\tilde{p} \delta_\theta u), u \big) 
        + \frac{h_\omega^2}{3} \big( \delta_\omega(\tilde{q} \delta_\omega u), u \big)
        - \beta \frac{h_\theta^2}{3} \big( \nabla_{\!\omega}(\tilde{q}_1 \nabla_{\!\theta} u), u \big) 
        - \beta \frac{h_\omega^2}{3} \big( \nabla_{\!\theta}(\tilde{q} \nabla_{\!\omega} u), u \big), \\
    L_2 &:= \frac{5 h_\theta^2}{12} \big( \delta_\theta^2(\varpi u), u \big) 
        + \frac{5 h_\omega^2}{12} \big( \delta_\omega^2(\varpi u), u \big) 
        + \beta \frac{h_\theta^2}{3} \big( \bar{q} \delta_\theta(q \delta_\theta u), u \big) 
        + \beta \frac{h_\omega^2}{3} \big( \bar{q} \delta_\omega(q \delta_\omega u), u \big), \\
    L_3 &:= -\frac{h_\theta^2}{12} \Big( \nabla_{\!\theta}\Big( \frac{\partial_{\theta} p}{p} \varpi u \Big), u \Big) 
        - \frac{h_\omega^2}{12} \Big( \nabla_{\!\omega}\Big( \frac{\partial_{\omega} q}{q} \varpi u \Big), u \Big) 
        - \frac{h_\omega^2}{3} \Big( \frac{\partial_{\omega} q}{q} \nabla_{\!\omega}(\varpi u), u \Big) 
        - \frac{h_\theta^2}{3} \Big( \frac{\partial_{\theta} q}{q} \nabla_{\!\theta}(\varpi u), u \Big),
\end{align*}
we can rewrite $(\mathcal{A}_hu,u)$ as 
\begin{align*}
    (\mathcal{A}_hu,u)=L_1+L_2+L_3.
\end{align*}
Due to Lemma \ref{secondordlemma}, we obtain \eqref{Ahuu} followed by the following inequalities:
\begin{align*}
    L_1 &= -\frac{h_\theta^2}{3} \|\delta_\theta u\|_{\tilde{p}}^2 
          - \frac{h_\omega^2}{3} \|\delta_\omega u\|_{\tilde{q}}^2
          + \beta \frac{h_\theta^2}{3} \big( \tilde{q}_1 \nabla_{\!\theta} u, \nabla_{\!\omega} u \big) 
          + \beta \frac{h_\omega^2}{3} \big( \tilde{q} \nabla_{\!\omega} u, \nabla_{\!\theta} u \big) \\
        &\leq C h^2 \big( \|\delta_\theta u\|_{\rho_0}^2 + \|\delta_\omega u\|_{\eta}^2 \big) 
          + C h^2 \|\nabla_{\!\theta} u\| \|\nabla_{\!\omega} u\| \\
        &\leq C h^2 \big( \|\delta_\theta u\|_{\rho_0}^2 + \|\delta_\omega u\|_{\eta}^2 \big), \\[1ex]
    L_2 &= -\frac{5 h_\theta^2}{12} \big( \delta_\theta(\varpi u), \delta_\theta u \big)_\theta 
           - \frac{5 h_\omega^2}{12} \big( \delta_\omega(\varpi u), \delta_\omega u \big)_\omega 
           - \beta \frac{h_\theta^2}{3} \big( q \delta_\theta u, \delta_\theta(\bar{q} u) \big)_\theta 
           - \beta \frac{h_\omega^2}{3} \big( q \delta_\omega u, \delta_\omega(\bar{q} u) \big)_\omega \\
        &\leq C h^2 \big( 
             \|\delta_\theta(\varpi u)\| \|\delta_\theta u\| 
           + \|\delta_\omega(\varpi u)\| \|\delta_\omega u\| 
           + \|\delta_\theta u\| \|\delta_\theta(\bar{q} u)\| 
               + \|\delta_\omega u\| \|\delta_\omega(\bar{q} u)\| \big) \\
        &\leq C h^2 \Big( \|\delta_\theta u\|_{\rho_0}^2 
               + \|\delta_\omega u\|_{\eta}^2 
               + \|u\|_{\varpi}^2 \Big), \\[1ex]
    L_3 &\leq C h^2 \Big( 
             \Big\| \nabla_{\!\theta}\Big( \frac{\partial_\theta p}{p} \varpi u \Big) \Big\| \|u\| 
           + \Big\| \nabla_{\!\omega}\Big( \frac{\partial_\omega q}{q} \varpi u \Big) \Big\| \|u\| 
           + \|\nabla_{\!\omega}(\varpi u)\| \|u\| 
               + \|\nabla_{\!\theta}(\varpi u)\| \|u\| \Big) \\
        &\leq C h^2 \big( \|\delta_\theta u\|_{\rho_0}^2 
               + \|\delta_\omega u\|_{\eta}^2 
               + \|u\|_{\varpi}^2 \big).
\end{align*}
   
For \eqref{Bhuu}, we can rewrite $(\mathcal{B}_h u, u)$ as
\begin{align*}
    (\mathcal{B}_h u, u) = L_4 + L_5 + L_6 + L_7 + L_8,
\end{align*}
where
\begin{align*}
     L_4 &:= \frac{h_\omega^2}{12} \Big( \nabla_{\!\omega}\Big( \frac{\partial_{\omega} q}{q} \delta_\theta(\hat{p} \delta_\theta u) \Big), u \Big) 
     + \frac{h_\theta^2}{3} \Big( \frac{\partial_{\theta} q}{q} \nabla_{\!\theta}(\delta_\theta(\hat{p} \delta_\theta u)), u \Big) 
     + \frac{h_\omega^2}{3} \Big( \frac{\partial_{\omega} q}{q} \nabla_{\!\omega}(\delta_\theta(\hat{p} \delta_\theta u)), u \Big) ,\\
     L_5 &:= \frac{h_\theta^2}{12} \Big( \nabla_{\!\theta}\Big( \frac{\partial_{\theta} p}{p} \delta_\omega(\hat{q} \delta_\omega u) \Big), u \Big) 
     + \frac{h_\omega^2}{3} \Big( \frac{\partial_{\omega} p}{p} \nabla_{\!\omega}(\delta_\omega(\hat{q} \delta_\omega u)), u \Big) 
     + \frac{h_\theta^2}{3} \Big( \frac{\partial_{\theta} p}{p} \nabla_{\!\theta}(\delta_\omega(\hat{q} \delta_\omega u)), u \Big) ,\\
     L_6 &:= -\beta \frac{h_\theta^2}{12} \Big( \nabla_{\!\theta}\Big( \frac{\partial_{\theta} p}{p} \nabla_{\!\omega}(\hat{q}_1 \nabla_{\!\theta} u) \Big), u \Big) 
     - \beta \frac{h_\omega^2}{12} \Big( \nabla_{\!\omega}\Big( \frac{\partial_{\omega} q}{q} \nabla_{\!\omega}(\hat{q}_1 \nabla_{\!\theta} u) \Big), u \Big) ,\\
     L_7 &:= -\beta \frac{h_\omega^2}{3} \Big( \frac{\partial_{\omega} q}{q} \nabla_{\!\omega}(\nabla_{\!\omega}(\hat{q}_1 \nabla_{\!\theta} u)), u \Big) 
     - \beta \frac{h_\omega^2}{12} \Big( \nabla_{\!\omega}\Big( \frac{\partial_{\omega} p}{p} \nabla_{\!\theta}(\hat{q} \nabla_{\!\omega} u) \Big), u \Big) ,\\
     L_8 &:= -\beta \frac{h_\theta^2}{12} \Big( \nabla_{\!\theta}\Big( \frac{\partial_{\theta} q}{q} \nabla_{\!\theta}(\hat{q} \nabla_{\!\omega} u) \Big), u \Big)
     - \beta \frac{h_\theta^2}{3} \Big( \frac{\partial_{\theta} q}{q} \nabla_{\!\theta}(\nabla_{\!\theta}(\hat{q} \nabla_{\!\omega} u)), u \Big).
\end{align*}
According to Lemma \ref{inverinequLem} and Young's inequality, we have
\begin{align*}
  L_4& = -\frac{h_\omega^2}{12} \Big(\frac{\partial_{\omega} q}{q} \delta_\theta(\hat{p} \delta_\theta u),\nabla_{\!\omega} u\Big)
  - \frac{h_\theta^2}{3} \Big( \delta_\theta(\hat{p} \delta_\theta u),\nabla_{\!\theta}\Big(\frac{\partial_{\theta} q}{q}u\Big)\Big)
  - \frac{h_\omega^2}{3} \Big( \delta_\theta(\hat{p} \delta_\theta u),\nabla_{\!\omega}\Big(\frac{\partial_{\omega} q}{q}u\Big)\Big)\\
  & \leq Ch^2\Big( \|\delta_\theta(\hat{p} \delta_\theta u)\|\|\nabla_{\!\omega} u\|  +\|\delta_\theta(\hat{p} \delta_\theta u)\|\|\nabla_{\!\theta}\Big(\frac{\partial_{\theta} q}{q}u\Big)\| 
   +\|\delta_\theta(\hat{p} \delta_\theta u)\|\|\nabla_{\!\omega}\Big(\frac{\partial_{\omega} q}{q}u\Big)\|  \Big)\\
  & \leq Ch^3 \|\delta_\theta(\hat{p} \delta_\theta u)\|^2 + Ch\Big(\|\delta_\theta u\|_{\rho_0}^2 + \|\delta_\omega u\|_{\eta}^2 + \|u\|_{\varpi}^2 \Big)\\
  & \leq Ch\|\delta_\theta u\|_{l^2_\theta}^2 + Ch\Big(\|\delta_\theta u\|_{\rho_0}^2 + \|\delta_\omega u\|_{\eta}^2 + \|u\|_{\varpi}^2 \Big)\\
  & \leq Ch\Big(\|\delta_\theta u\|_{\rho_0}^2 + \|\delta_\omega u\|_{\eta}^2 + \|u\|_{\varpi}^2 \Big).
\end{align*}
  
Similarly, we have
\begin{align*}
  L_i \leq Ch\big(\|\delta_\theta u\|_{\rho_0}^2 + \|\delta_\omega u\|_{\eta}^2 + \|u\|_{\varpi}^2 \big),\quad i=5,6,7,8.
\end{align*}
Then, we get \eqref{Bhuu}. 

To proof \eqref{Chuu}, we first rewrite $(\mathcal{C}_h u,u)$ as
\begin{align*}
  (\mathcal{C}_h u,u)=L_9+L_{10}+L_{11},
\end{align*}
where
\begin{align*}
    L_9
    &:= \frac{h_\theta^2}{3} (\delta_\theta^2(\delta_\theta(\hat{p} \delta_\theta u)),u)
    + \frac{5 h_\omega^2}{12} (\delta_\omega^2(\delta_\theta(\hat{p} \delta_\theta u)),u)
    + \frac{h_\omega^2}{3} (\delta_\omega^2(\delta_\omega(\hat{q} \delta_\omega u)),u),\\
    L_{10}
    &:= \frac{5 h_\theta^2}{12} (\delta_\theta^2(\delta_\omega(\hat{q} \delta_\omega u)),u)
    - \beta \frac{h_\theta^2}{4} (\delta_\theta^2(\nabla_{\!\omega}(\hat{q}_1 \nabla_{\!\theta} u)),u)
    - \beta \frac{h_\omega^2}{4} (\delta_\omega^2(\nabla_{\!\omega}(\hat{q}_1 \nabla_{\!\theta} u)),u),\\
    L_{11}
    &:=-\beta \frac{h_\omega^2}{4} (\delta_\omega^2(\nabla_{\!\theta}(\hat{q} \nabla_{\!\omega} u)),u)
    - \beta \frac{h_\theta^2}{4} (\delta_\theta^2(\nabla_{\!\theta}(\hat{q} \nabla_{\!\omega} u)),u).
\end{align*}
By applying Lemma \ref{secondordlemma}, we obtain
\begin{align}\label{L9}
    L_9 
    &= -\frac{h_\theta^2}{3} (\hat{p} \delta_\theta u,\delta_\theta\delta_\theta^2u)_\theta
    - \frac{5 h_\omega^2}{12} (\hat{p} \delta_\theta u,\delta_\theta\delta_\omega^2u)_\theta
    - \frac{h_\omega^2}{3} (\hat{q} \delta_\omega u,\delta_\omega\delta_\omega^2u)_\omega\nn\\
    &\leq \frac{h_\theta^2}{3}\|\delta_\theta u\|_{\theta}\|\hat{p} \delta_\theta\delta_\theta^2u\|_{\theta}+\frac{5 h_\omega^2}{12}\|\delta_\theta u\|_{\theta}\|\hat{p}\delta_\theta\delta_\omega^2u\|_{\theta}+ \frac{h_\omega^2}{3}\|\delta_\omega u\|_{\omega}\|\hat{q} \delta_\omega\delta_\omega^2u\|_{\omega}\\
    &\leq \frac{1}{8}\|\delta_\theta u\|_{\rho_0}^2 + \frac{1}{16}\|\delta_\omega u\|_{\eta}^2 + \frac{4}{9\rho_0^-}h_\theta^4\|\hat{p}\delta_\theta \delta_\theta^2 u\|_{\theta}^2 + \frac{25}{36\rho_0^-}h_\omega^4\|\hat{p}\delta_\theta \delta_\omega^2 u\|_{\theta}^2+\frac{4}{9\eta_0}h_\omega^4\|\hat{q}\delta_\omega \delta_\omega^2 u\|_{\omega}^2.\nn
\end{align}
Similarly, we obtain
\begin{align}\label{L10}
    L_{10}
    &\leq \frac{1}{8}\|\delta_\theta u\|_{\rho_0}^2 + \frac{1}{16}\|\delta_\omega u\|_{\eta}^2 +\frac{25}{36\eta_0}h_\theta^4\|\hat{q}\delta_\omega \delta_\theta^2 u\|_{\omega}^2 \nn \\
    &\quad + \frac{\beta^2}{4\rho_0^-}h_\theta^4\|\hat{q}_1\nabla_{\!\omega} \delta_\theta^2 u\|^2 + \frac{\beta^2}{4\rho_0^-}h_\omega^4\|\hat{q}_1\nabla_{\!\omega} \delta_\omega^2 u\|^2,\\
    \label{L11}
    L_{11}
    &\leq \frac{1}{8}\|\delta_\omega u\|_{\eta}^2 + \frac{\beta^2}{4\eta_0}h_\omega^4\|\hat{q}\nabla_{\!\theta} \delta_\omega^2 u\|^2 + \frac{\beta^2}{4\eta_0}h_\theta^4\|\hat{q}\nabla_{\!\theta} \delta_\theta^2 u\|^2.
\end{align}
Next, we deal with $(\mathcal{S}_h u, u)$. By Lemma \ref{secondordlemma}, we derive
\begin{align}\label{Shuuest}
\begin{split}
    (\mathcal{S}_h u, u) & = \frac{4}{9\rho_0^-}h_\theta^4\|\hat{p}\delta_\theta \delta_\theta^2 u\|_{\theta}^2 + \frac{25}{36\rho_0^-}h_\omega^4\|\hat{p}\delta_\theta \delta_\omega^2 u\|_{\theta}^2+\frac{4}{9\eta_0}h_\omega^4\|\hat{q}\delta_\omega \delta_\omega^2 u\|_{\omega}^2 \\
     & \quad +\frac{25}{36\eta_0}h_\theta^4\|\hat{q}\delta_\omega \delta_\theta^2 u\|_{\omega}^2 + \frac{\beta^2}{4\rho_0^-}h_\theta^4\|\hat{q}_1\nabla_{\!\omega} \delta_\theta^2 u\|^2 + \frac{\beta^2}{4\rho_0^-}h_\omega^4\|\hat{q}_1\nabla_{\!\omega} \delta_\omega^2 u\|^2 \\
     & \quad + \frac{\beta^2}{4\eta_0}h_\omega^4\|\hat{q}\nabla_{\!\theta} \delta_\omega^2 u\|^2 + \frac{\beta^2}{4\eta_0}h_\theta^4\|\hat{q}\nabla_{\!\theta} \delta_\theta^2 u\|^2,
\end{split}
\end{align}
The combination of \eqref{L9}–\eqref{Shuuest} leads to the desired result \eqref{Chuu}.
Thus, we end the proof of Lemma \ref{MainLemma}.
\end{proof}

\noindent
\underline{\bf Proof of solvability}  
To prove the existence and uniqueness of the solution of the scheme~\eqref{coschsshf2}, we only consider the corresponding homogeneous linear system:
\begin{align}\label{FD1homo}
    \mathcal{L}_h u_{ij} + \mathcal{A}_h u_{ij} + \mathcal{B}_h u_{ij}
    + (\mathcal{S}_h - \mathcal{C}_h) u_{ij} = 0.
\end{align}
Taking the discrete inner product of~\eqref{FD1homo} with~$u_{ij}$, we have
\begin{align*}%\label{SSuniq1}
    (\mathcal{L}_h u, u) + ((\mathcal{S}_h - \mathcal{C}_h) u, u)
    = -(\mathcal{A}_h u, u)-(\mathcal{B}_h u, u)
    \le |(\mathcal{A}_h u, u)| + |(\mathcal{B}_h u, u)|.
\end{align*}
As a direct consequence of Lemma~\ref{MainLemma}, we can readily obtain that
\begin{align*}
    \Big( \frac{1}{4} - C_1 h - C_1 h^2 \Big)(\|\delta_\theta u\|_{\rho_0}^2 + \|\delta_\omega u\|_{\eta}^2)
    + (1 - C_1 h - C_1 h^2)\|u\|_{\varpi}^2 \leq 0,
\end{align*}
which implies $\|u\|_{h,1} = 0$ for $h<\min\{1,\frac{1}{8C_1}\}$. 
Hence, the corresponding non-homogeneous linear system~\eqref{coschsshf2} has a unique solution.

\medskip
\noindent
\underline{\bf Proof of stability}
By taking the discrete inner product of \eqref{coschsshf2} with $u$, and applying Lemma \ref{MainLemma} and Young's inequality, we obtain:
\begin{align*}
    \Big( \frac{1}{4} - C_1 h - C_1 h^2 \Big) \Big( \|\delta_\theta u\|_{\rho_0}^2 + \|\delta_\omega u\|_{\eta}^2 \Big)
    + \Big(1 - C_1 h - C_1 h^2\Big)\|u\|_{\varpi}^2 
    \leq (g, u) 
    \leq \frac{1}{2}\|u\|_{\varpi}^2 + C\|g\|^2.
\end{align*}
This leads to
\begin{align*}
    \Big( \frac{1}{4} - C_1 h - C_1 h^2 \Big) \Big( \|\delta_\theta u\|_{\rho_0}^2 + \|\delta_\omega u\|_{\eta}^2 \Big)
    + \Big( \frac{1}{2} - C_1 h - C_1 h^2 \Big)\|u\|_{\varpi}^2 
    \leq C\|g\|^2,
\end{align*}
which yields the desired result$ \|u\|_{h,1}\leq C\|g\|$ for $h<\min\{1,\frac{1}{8C_1}\}$.

\medskip
\noindent
\underline{\bf Proof of convergence}
The exact solution $u(\theta_i,\omega_j)$ satisfies
\begin{align}\label{SSexact}
\begin{split}
    \mathcal{L}_h u(\theta_i,\omega_j) + \mathcal{A}_h u(\theta_i,\omega_j)
    + \mathcal{B}_h u(\theta_i,\omega_j) + \big( \mathcal{S}_h - \mathcal{C}_h \big) u(\theta_i,\omega_j)
    = g_{ij} + T_r,
\end{split}
\end{align}
where $T_r$ is the truncation error satisfying
\begin{align}\label{truncation}
    \|T_r\| \leq Ch^4.
\end{align}
Denote the error function by $e_{ij}=u(\theta_i,\omega_j)-u_{ij}$ and $e=\{e_{ij}| (i,j) \in \mathfrak{J}_h^{in}\}$.
Then subtracting \eqref{coschsshf2} from \eqref{SSexact} and taking the discrete inner product with $e$ yields
\begin{align*}
    (\mathcal{L}_h e, e) + (\mathcal{A}_h e, e) + (\mathcal{B}_h e, e)
    + ((\mathcal{S}_h-\mathcal{C}_h) e, e)
    = (T_r, e).
\end{align*}
From the stability of the scheme and \eqref{truncation}, we obtain
\begin{align*}
    \Big( \frac{1}{4} - C_1 h - C_1 h^2 \Big)\big( \|\delta_\theta e\|_{\rho_0}^2 + \|\delta_\omega e\|_{\eta}^2 \big)
    + \Big( \frac{1}{2} - C_1 h - C_1 h^2 \Big)\|e\|_{\varpi}^2 
    \leq C\|T_r\|^2 \leq Ch^8,
\end{align*}
which yields the approximation error for $h<\min\{1,\frac{1}{8C_1}\}$. 
Thus, we complete the proof of Theorem \ref{thm:conv-hel}.

\section*{Acknowledgments}
This work is supported by the Nature Science Fundation of China Grants Nos.
	12271365,
	11771299,
	12171141, 
and the Nature Science Fundation of Shanghai Grants Nos.
	22ZR1445400,
	20JC1413800. 
The third author is supported in part by the China Postdoctoral Science Foundation under Grant No. 2024M751546.
The forth author is supported in part by Singapore MOE AcRF Tier 1 Grant: RG95/23, and Singapore MOE AcRF Tier 2 Grant: MOE-T2EP20224-0012.

\bibliographystyle{unsrt}
\bibliography{References}
\end{document}